\numberwithin{equation}{section}
\newcommand\Cop{\Delta_s} 
\newcommand\COP{\Delta_u} 
\newcommand\K{\mathcal{K}}
\newcommand\coP{\Delta_{r}}
\newcommand{\WW}{{\mathds{V}\!\!\text{\reflectbox{$\mathds{V}$}}}}
\newcommand{\Ww}{\mathds{W}}
\newcommand{\ww}{\textup{W}}
\newcommand{\vV}{\reflectbox{$\mathds{V}$}}
\newcommand{\vv}{\textup{V}}
\theoremstyle{plain}
\newtheorem{theorem}{Theorem}[section]
\newtheorem{tw}[theorem]{Theorem}
\newtheorem{lem}[theorem]{Lemma}
\newtheorem{prop}[theorem]{Proposition}
\newtheorem{cor}[theorem]{Corollary}
\theoremstyle{remark}
\newtheorem{remark}[theorem]{Remark}
\newtheorem{deft}[theorem]{Definition}
\DeclareMathOperator\supp{supp}
\DeclareMathOperator\ball{ball}
\DeclareMathOperator\linsp{span}
\newcommand{\lt}{L}
\newcommand\sm{\setminus}
\newcommand\col{\colon}
\newcommand\sub{\subseteq}
\newcommand\Ind{\mathcal{I}}
\newcommand{\N}{\mathbb{N}}
\newcommand{\complex}{\mathbb{C}}
\newcommand{\bn}{\mathbb{N}}
\newcommand{\bc}{\mathbb{C}}
\newcommand{\bt}{\mathbb{T}}
\newcommand{\C}{\mathbb{C}}
\DeclareMathOperator{\Fix}{Fix}
\DeclareMathOperator\Prob{Prob}
\DeclareMathOperator\Ran{Ran}
\DeclareMathOperator\clsp{\hbox{$\overline{\mathrm{span}}$}}
\newcommand\conv{\star}
\newcommand\latetilde{\widetilde{\phantom{x}}}
\DeclareMathOperator\plim{\text{$p$}-lim}
\DeclareMathOperator*\wlim{w*-lim}
\newcommand{\B}{\mathrm{B}}
\newcommand\ruc{\mathit{RUC}}
\newcommand\luc{\mathit{LUC}}
\newcommand\uc{\mathit{UC}}
\newcommand{\WAP}{\mathit{WAP}}
\newcommand{\set}[2]{\{\,{\textstyle#1}:\,{\textstyle #2}\,\}}
\newcommand{\inv}{^{-1}}
\newcommand\conj{\overline}
\newcommand\lone{L^1}
\newcommand\ltwo{L^2}
\newcommand\linf{L^\infty}
\newcommand{\vn}{\mathit{VN}}
\newcommand{\VN}{\vn}
\newcommand{\G}{\mathbb{G}}
\newcommand{\ot}{\otimes}
\newcommand{\cop}{\Delta}
\newcommand{\id}{\mathrm{id}}
\newcommand{\M}{\mathit{M}}
\newcommand{\PD}{\mathit{P}}
\newcommand{\Com}{\Delta}
\newcommand{\MUG}{\M^u(\G)}
\newcommand{\MG}{\M(\G)}
\newcommand{\PG}{\PD(\G)}
\newcommand{\PUG}{\PD^u(\G)}
\newcommand{\QG}{\G}
\newcommand{\hQG}{\widehat{\QG}}
\newcommand{\wt}{\widetilde}
\newcommand\vnot{\mathop{\overline\otimes}}
\newcommand{\wot}{\vnot}
\newcommand{\Hil}{\mathsf{H}}
\newcommand{\Linf}{L^{\infty}}
\newcommand\dual[1]{\widehat{#1}}
\newcommand{\la}{\langle}
\newcommand{\ra}{\rangle}
\newenvironment{rlist}
{\begin{enumerate}[label=\textup{(\roman*)}, itemsep=1ex]}
{\end{enumerate}}
\begin{document}

\title[Convolution powers of contractive quantum measures]
{Fixed points and limits of convolution powers of contractive quantum measures}

\author{Matthias Neufang}
\address{School of Mathematics \& Statistics, Carleton University, Ottawa, ON,
	Canada K1S 5B6}
\email{mneufang@math.carleton.ca}

\author{Pekka Salmi}
\address{Department of Mathematical Sciences,
	University of Oulu, PL 3000, FI-90014 Oulun yliopisto, Finland}
\email{pekka.salmi@iki.fi}

\author{Adam Skalski}
\address{Institute of Mathematics of the Polish Academy of Sciences,
	ul.\'Sniadeckich 8, 00-656 Warszawa, Poland}
\email{a.skalski@impan.pl}

\author{Nico Spronk}
\address{Department of Pure Mathematics,
	University of Waterloo, Waterloo, ON, N2L 3G1, Canada}
\email{n.spronk@uwaterloo.ca}

\date{\today}

\keywords{Locally compact quantum group,
	convolution operators, contractive quantum measures, fixed points}
\subjclass[2000]{Primary 46L65, Secondary 43A05, 46L30, 60B15}

\begin{abstract}
	\noindent We study fixed points of contractive convolution operators associated to contractive quantum measures on locally compact quantum groups. We characterise the existence of non-zero fixed points respectively on $L^\infty(\G)$ and on $C_0(\G)$, and exploit these results to obtain for example the structure of the fixed points on the non-commutative $L_p$-spaces. Some consequences for the fixed points of classical convolution operators and Herz-Schur multipliers are also indicated.
\end{abstract}
\maketitle

Convolution operators associated to measures on a locally compact
group $G$ form a rich and interesting class of transformations, acting
on various  function spaces associated with $G$, and playing a key
role in the study of probabilistic, geometric and harmonic-analytic
phenomena related to $G$. In particular the random walk
interpretations provide motivation to analyse the limit behaviour of
iterations of convolution operators (as nicely described in \cite{Gren} and presented from the point of view of quantum generalisations in \cite{Pekkasurvey}), and thus in particular the nature
of idempotent probability measures. The latter are very well understood,
but, perhaps surprisingly, dropping the positivity requirement makes
the problem of characterising the class of idempotent measures very difficult, and it is only fully solved for
contractive measures  \cite{greenleaf:homo} and for abelian
groups \cite{Cohen}. More generally, one can ask about the structure
of the space of fixed points of a given convolution operator (see
\cite{Chu-Lau}, of which more will be said below), which at least in the positive case
can be interpreted as the collection of harmonic functions
for a given measure -- or as the space of functions on the
corresponding Poisson boundary (see for example \cite{Vadim} and references therein). 

If the group $G$ in question is abelian, the convolution operators can
be also viewed as Fourier multipliers. This perspective makes it
natural to study also fixed points of such operators even when $G$ is
non-abelian. This is to a large extent the point of view taken in the
lecture notes \cite{Chu-Lau}, where a lot of care is devoted to
studying for example the space of fixed points of contractive
Herz--Schur multipliers acting on the group von Neumann algebra
$\VN(G)$.  

The theory of locally compact quantum groups, as formulated by
Kustermans and Vaes in \cite{KV}, provides a framework which allows
asking these types of questions from a unified perspective, at the
same time vastly generalising the class of objects studied. The
concept of quantum convolution operators, as investigated for example
in \cite{jnr} and \cite{daws} has now been reasonably well understood,
and in particular several questions related to the nature of
idempotent states (\cite{salmi-skalski:idem} and references therein)
or the structure of the fixed points of positive quantum convolution
operators (\cite{knr}, \cite{knr2}) have found satisfactory
answers. In our previous work, \cite{NSSS}, we looked at the
idempotent contractive quantum measures and characterised their form,
generalising the results of Greenleaf mentioned above (see also
\cite{kasprzak}). Here we investigate the structure of the fixed point
spaces, so in other words `generalised harmonic elements', associated
to arbitrary quantum contractive measures.

The quantum context necessitates a distinction between the `universal'
and `reduced' quantum measures (respectively understood as bounded
functionals on the  universal and reduced algebra of continuous
functions on $\G$ vanishing at infinity); both of these induce
convolution operators on the algebra $L^\infty(\G)$ (as well as for
example on $C_0(\G)$). We work primarily in the more general setting
of universal measures. It is natural to expect that the
space of fixed points of a given operator is related to a limit of its
Ces\'aro averages. To be able to exploit this fully, we introduce yet
another class of generalised measures, i.e.\ the dual of the space of
`universal right uniformly continuous functions'. With this in hand, we
are able to show our first main result, Theorem
\ref{lemma:cesaro-nilpotent}, characterising the existence of non-zero
fixed points for the convolution operator on $L^\infty(\G)$. The
results on the structure of the fixed point space, often showing that
it is in fact one-dimensional (if the measure in question is
non-degenerate) require assuming more about the nature of a potential
fixed point. The central theorems of this type are Theorem
\ref{thm:group-like} and Proposition \ref{prop:more-equiv}. These results,
although at first glance quite technical in nature, turn out to have
several interesting consequences; both in the general quantum group
context, where they for example allow us to characterise fixed points
in non-commutative $L_p$-spaces, but also in the classical framework
and its dual, where they lead to generalisations of several earlier 
theorems (e.g.\ of \cite{Chu-Lau}). In several instances we are able
to connect the fixed points of the convolution by a quantum contractive
measure $\omega$ to the fixed points of the convolution by the absolute
value of $\omega$, which opens the way to exploiting the state results
already available in the literature. 

Finally we would like to recall that the fixed points of a positive (quantum) convolution operator admit a canonical  structure of a von Neumann algebra equipped with the action of the (quantum) group in question; this is nothing but a (quantum) Poisson boundary, as mentioned above. Once we consider general contractive (quantum) convolution operator, the fixed point space admits a canonical TRO structure, and the corresponding (quantum) group action. The study of such actions was initiated in \cite{PekkaAdamKyoto}; it is however fair to say that it remains in a relatively early stage and deeper harmonic analysis applications are only to be developed.

The detailed plan of the paper is as follows: in Section 1 we recall
basic facts and notations related to locally compact quantum groups
and introduce the algebra $\ruc^u(\G)^*$ and associated quantum
convolution operators, which play a useful technical role in what
follows. In Section 2 non-zero fixed points in $L^\infty(\G)$ are
studied, and their existence characterised. Here also we get the first
results on the structure of the fixed point space if the `universal'
convolution operator admits a non-zero fixed point in $\luc^u(\G)$. In
Section 3 we turn our attention to the preduals of
$L^\infty(\G)$-fixed points and use them to characterise certain
properties of the quantum group in question. The fourth section
concerns fixed points in $C_0(\G)$; the results obtained there are
used in the fifth section to discuss the existence of non-zero fixed
points in $L_p(\G)$ (for tracial Haar weights). In Section 6 the
general results are specialised to the context of classical locally
compact groups, and appropriately strengthened. Finally in Section 7
we discuss the `dual to classical' case.

\section{Preliminaries}

\subsection{Locally compact quantum groups}
We follow here the von Neumann algebraic approach to locally compact
quantum groups due to Kustermans and Vaes \cite{KV}, see also
\cite{knr} and \cite{knr2} for more background. A \emph{locally
  compact quantum group} $\QG$, effectively a virtual object, is
studied via the von Neumann algebra $L^{\infty}(\QG)$, playing the
role of the algebra of essentially bounded functions on $\QG$,
equipped with a \emph{comultiplication}
$\Com:L^{\infty}(\QG) \to L^{\infty}(\QG) \wot L^{\infty}(\QG)$, which
is a unital normal coassociative $*$-homomorphism. A locally compact
quantum group $\QG$ is by definition assumed to 
admit  a left Haar weight $\phi$ and a right Haar weight $\psi$ --
these are faithful, normal semifinite weights on $L^{\infty}(\QG)$
satisfying suitable invariance conditions. We can associate to $\QG$
also its \emph{algebra of continuous functions vanishing at infinity},
$C_0(\QG)$. It is a $C^*$-subalgebra of $L^{\infty}(\QG)$ and the
comultiplication restricts to a map from $C_0(\QG)$ to the multiplier algebra
of $C_0(\QG) \ot C_0(\QG)$.   We say that $\QG$ is \emph{compact} if  $C_0(\QG)$ is unital, in which case we denote it simply  $C(\QG)$.
The GNS representation space for the left Haar weight will be denoted
by $\ltwo(\QG)$. We may in fact assume that $C_0(\QG)$ is a
non-degenerate subalgebra of $B(\ltwo(\QG))$. Each locally compact
quantum group $\QG$ admits the \emph{dual} locally compact quantum
group $\hQG$; in fact the algebra $L^{\infty}(\hQG)$ acts naturally on
$\ltwo(\QG)$. We call $\QG$ \emph{discrete}, if $\hQG$ is compact. If $\QG=G$ happens to be a locally compact group, then
$L^{\infty}(\hQG)=\VN(G)$. Finally note that by analogy with the
classical situation we denote the predual of $L^{\infty}(\QG)$ by
$L^1(\QG)$. 

To each locally compact quantum group $\G$, we may also 
associate a universal $C^*$-algebra $C_0^u(\G)$ with comultiplication $\COP$;
see  \cite{ku}. 
There is a surjective $*$-homomorphism $\Lambda: C_0^u(\G)\to C_0(\G)$
such that $(\Lambda\ot\Lambda)\circ\COP = \cop\circ\Lambda$.
We call $\Lambda$ the \emph{reducing morphism} of $\G$. 
The reducing morphism of the dual $\dual\G$ is denoted
by $\dual\Lambda:C_0^u(\dual\G)\to C_0(\dual\G)$.

The  comultiplication $\cop$ of $C_0(\G)$ is implemented 
by the multiplicative unitary $\ww\in M(C_0(\G)\ot C_0(\dual \G))$:
\[
\cop(x) = \ww^*(1\ot x)\ww, \qquad x\in C_0(\G).
\]
The multiplicative unitary $\ww$ admits a universal lift
$\WW\in M(C_0^u(\G)\ot C_0^u(\dual \G))$ such that
$(\Lambda\ot \dual\Lambda)(\WW) = \ww$. 
We shall also need  half-universal versions of the 
bicharacter and the comultiplication: define
\[
\Ww = (\id\ot \dual\Lambda)\WW.
\]

The unitary antipode of $\G$ will be denoted by $R$. It is a
$^*$-antiautomorphism of $C_0(\G)$ (and in fact of $B(\ltwo(\QG))$)
of order 2. Its universal version, which is a   $^*$-antiautomorphism
of $C_0^u(\G)$ of order 2, will be denoted by $R_u$.
We have $\Lambda\circ R_u = R \circ \Lambda$.

We shall often write  $M^u(\G):= C_0^u(\G)^*$ and
$M(\G):= C_0(\G)^*$, and denote their unit balls by 
$M^u(\G)_1$ and $M(\G)_1$, respectively. 
Note that $M^u(\G)$ is a  completely contractive Banach algebra
with respect to convolution $\conv$, defined as the adjoint
of comultiplication $\COP$. Similarly also $M(\G)$ is a  completely
contractive Banach algebra.


\subsection{Convolution operators} 


Fix a locally compact quantum group $\QG$ and 
define
\[
\Cop(x) = \Ww^*(1\ot x)\Ww, \qquad x\in\linf(\G).
\]
Note that $\Ww$ as well as $\Cop(x)$ 
are in $M(C_0^u(\G)\ot \K(\ltwo(\G)))$ \cite{ku}.
Moreover, by equation (6.1) of \cite{ku} (applied to
$\Ww$), we have that
\begin{equation} \label{eq:reducing}
\Ww^*(1\ot \wt\Lambda(y))\Ww = (\id\ot \wt\Lambda)\circ\COP(y),
\qquad y\in C_0^u(\G)^{**},
\end{equation}
where $\wt\Lambda  : C_0^u(\G)^{**} \to \linf(\G)$ is the
normal extension of the reducing morphism $C_0^u(\G) \to C_0(\G)$.

For every $\omega\in M^u(\G)$ and $\phi\in \lone(\G)$,
there is a unique $\psi \in\lone(\G)$ such that
\[
(\omega\ot \phi\Lambda)\COP(a) = (\psi\Lambda)(a),\qquad a\in C_0^u(\G).
\]
We will write $\omega\conv \phi:= \psi$.
Similarly $\phi\conv \omega\in\lone(\G)$  is the unique $\psi' \in
\lone(\G)$ such that 
\[
( \phi\Lambda \ot \omega)\COP(a) = (\psi'\Lambda)(a),\qquad a\in C_0^u(\G).
\]
Hence we consider $\lone(\G)$ an ideal in $M^u(\G)$.
Note that for $x\in\linf(\G)$
\[
\omega\conv\phi (x) = \omega((\id\ot\phi)\Cop(x)).
\]
We will also need  the following formula:
\begin{equation}
  (\omega \star \phi)\circ R = (\phi \circ R) \star (\omega \circ
  R_u), \qquad \omega\in M^u(\G), \phi\in \lone(\G).
  \label{convantipode}
\end{equation} 
It follows from the above characterisations of the respective convolutions and the equality 
$ \chi(R_u \ot R_u)\circ \COP = \COP\circ R_u$ established in \cite{ku}.

Let
\[
\ruc^u(\G) = \clsp\set{(\id\ot \phi)\Cop(x)}{\phi\in\lone(\G), x\in\linf(\G)}.
\]
This is the universal version of $\ruc(\G)$ introduced and studied
in \cite{runde}; we have $\Lambda(\ruc^u(\G)) = \ruc(\G)$.

\begin{lem}
  \begin{rlist}
    \item $C_0^u(\G) = \clsp\set{(\id\ot
      \phi)\Cop(a)}{\phi\in\lone(\G), a\in C_0(\G)}$ 
    \item $\ruc^u(\G)$ is an an operator system
      \textup{(}i.e.\ a unital and self-adjoint linear
      closed subspace\textup{)} in $M(C_0^u(\G))$.
  \end{rlist}
\end{lem}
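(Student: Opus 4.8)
The plan is to prove (i) by transporting the defining slices through the reducing identity \eqref{eq:reducing}, thereby reducing the claim to a density (``universal Podle\'s'') statement for the convolution action of $\lone(\G)$ on $\CG$, and to prove (ii) by checking the three operator-system axioms directly on the generating slices.

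For (i), the first and decisive step is an algebraic rewriting. Since $\Lambda$ is surjective, every $a\in C_0(\G)$ can be written as $a=\Lambda(b)=\wt\Lambda(b)$ with $b\in\CG$. Applying \eqref{eq:reducing} gives $\Cop(a)=\Ww^*(1\ot\wt\Lambda(b))\Ww=(\id\ot\wt\Lambda)\COP(b)$, whence $(\id\ot\phi)\Cop(a)=(\id\ot\phi\Lambda)\COP(b)$ for every $\phi\in\lone(\G)$. Thus the right-hand side of (i) equals $\clsp\set{(\id\ot\phi\Lambda)\COP(b)}{\phi\in\lone(\G),\ b\in\CG}$, i.e. the closed span of the right convolutions of $\CG$ by the reduced functionals $\{\phi\Lambda:\phi\in\lone(\G)\}$. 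The inclusion into $\CG$ is then immediate, because $\COP$ makes $\CG$ a non-degenerate right module over $\MUG=\CG^*$, so each slice $(\id\ot\nu)\COP(b)$ already lies in $\CG$.

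The reverse inclusion is where the real content sits, and I expect it to be the main obstacle. It amounts to the non-degeneracy of the right $\lone(\G)$-module $\CG$, i.e. to the assertion that $\clsp\set{(\id\ot\phi\Lambda)\COP(b)}{\phi\in\lone(\G),\ b\in\CG}=\CG$ — a universal analogue of the strong Podle\'s condition. Note that the purely universal version, with $\phi\Lambda$ replaced by an arbitrary $\nu\in\MUG$, is trivial, since $\MUG$ is unital with unit the counit $\varepsilon$ and $(\id\ot\varepsilon)\COP(b)=b$; the difficulty is precisely that only reduced functionals are allowed. I would settle this either by invoking the known non-degeneracy of this action (cf.\ \cite{ku},\cite{daws}), or more self-containedly as follows. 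Using $(\Lambda\ot\id)\Cop(x)=\cop(x)$ one sees that $\Lambda$ carries the span $X$ onto $\clsp\set{(\id\ot\phi)\cop(a)}{a\in C_0(\G),\ \phi\in\lone(\G)}=C_0(\G)$ (reduced strong Podle\'s), so $X+\ker\Lambda=\CG$. Moreover $X$ is a closed right $\MUG$-submodule of $\CG$: since $\lone(\G)$ is an ideal of $\MUG$ and $(\phi\Lambda)\conv\nu=(\phi\conv\nu)\Lambda$, one has $\ppair{(\id\ot\phi\Lambda)\COP(b)}\triangleleft\nu=(\id\ot(\phi\conv\nu)\Lambda)\COP(b)\in X$. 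It then remains to show $\ker\Lambda\sub X$; controlling the kernel against $X$ is the delicate point, precisely because $\lone(\G)$ in general carries no bounded approximate identity.

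For (ii), closedness holds by definition. For self-adjointness, observe that $\Cop(x)^*=\Ww^*(1\ot x^*)\Ww=\Cop(x^*)$ and that $\ppair{(\id\ot\phi)z}^*=(\id\ot\phi^*)(z^*)$, where $\phi^*(y)=\overline{\phi(y^*)}$ again lies in $\lone(\G)$; hence $\ppair{(\id\ot\phi)\Cop(x)}^*=(\id\ot\phi^*)\Cop(x^*)\in\ruc^u(\G)$. For unitality, take $x=1$: since $\Ww$ is a unitary multiplier (being the image of the unitary $\WW$ under the unital $^*$-homomorphism $\id\ot\dual\Lambda$), we get $\Cop(1)=\Ww^*\Ww=1$, so for a normal state $\phi$ we obtain $(\id\ot\phi)\Cop(1)=\phi(1)\,1=1\in\ruc^u(\G)$. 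Finally, each generating slice lies in $M(\CG)$ because $\Cop(x)\in M(\CG\ot\K(\ltwo(\G)))$ and slicing a multiplier by a normal functional on the second leg returns a multiplier of $\CG$. Together these show that $\ruc^u(\G)$ is a unital, self-adjoint, norm-closed subspace of $M(\CG)$, as required.
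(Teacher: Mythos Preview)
Your treatment of (ii) is correct and in fact more explicit than the paper's, which simply declares it obvious after noting $\Cop(x)\in M(C_0^u(\G)\ot\K(\ltwo(\G)))$.

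For (i), however, there is a genuine gap. You reduce the statement to showing that the closed span $X=\clsp\set{(\id\ot\phi\Lambda)\COP(b)}{\phi\in\lone(\G),\ b\in\CG}$ exhausts $\CG$, and you correctly identify this as the crux. But you do not prove it. Your ``self-contained'' route establishes only $X+\ker\Lambda=\CG$ and then stops, openly conceding that $\ker\Lambda\sub X$ is ``the delicate point'' you cannot settle. The alternative of citing \cite{ku} or \cite{daws} for a ready-made ``universal Podle\'s'' statement is not safe either: neither reference states this module non-degeneracy as a black box in the form you need.

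The paper avoids this impasse entirely by working at the level of the bicharacters rather than through the reducing morphism. It uses that the slices $(\id\ot\sigma)\ww$ are dense in $C_0(\G)$ and the slices $(\id\ot\sigma)\Ww$ are dense in $\CG$ (equation (5.2) of \cite{ku}), together with the pentagon-type identity $(\Cop\ot\id)(\ww)=\Ww_{13}\ww_{23}$, to compute
\[
(\id\ot\phi)\Cop\bigl((\id\ot\sigma)\ww\bigr)=(\id\ot\sigma)\bigl(\Ww\,(\phi\ot\id)(\ww)\bigr).
\]
Since the elements $(\phi\ot\id)(\ww)$ are dense in $C_0(\dual\G)$ and $C_0(\dual\G)$ acts non-degenerately on $\ltwo(\G)$, density of the right-hand sides in $\CG$ follows immediately. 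This argument is short, concrete, and sidesteps the kernel problem altogether; it is worth internalising as the standard trick for this kind of density statement.
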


\begin{proof}
  (i)
  The elements of the form $(\id \ot \sigma)\ww$,
  with $\sigma\in  B(L^2(\G))_*$, are dense in $C_0(\G)$, 
  and similarly, by equation (5.2) of \cite{ku},
  the elements of the form $(\id \ot \sigma)\Ww$,
  with $\sigma\in  B(L^2(\G))_*$, are dense in $C_0^u(\G)$.
  By \eqref{eq:reducing} and Proposition~6.1 of \cite{ku}, we have
  \begin{equation} \label{eq:peneq}
  (\Cop\ot\id)(\ww) = (\id\ot\Lambda\ot \id)(\COP\ot\id)(\Ww) =
      \Ww_{13}\ww_{23}.
  \end{equation}
  For $\phi\in L^1(\G)$ and $\sigma\in  B(L^2(\G))_*$, we then have
  \[
  (\id\ot\phi)\bigl(\Cop((\id\ot\sigma)\ww)\bigr)
  =  (\id\ot\sigma)\bigl(\Ww(\phi\ot\id)(\ww)\bigr).
  \]
  Since the elements of the form $(\phi\ot\id)(\ww)$ are dense in
  $C_0(\dual \G)$ and $C_0(\dual \G)$ is non-degenerate on $\ltwo(\G)$,
  statement (i) follows from the previous equation and the discussion
  above.

  (ii)
  Since  $\Cop(x)$ is in $M(C_0^u(\G)\ot \K(\ltwo(\G)))$, it follows
  that $\ruc^u(\G)\sub M(C_0^u(\G))$. 
  That $\ruc^u(\G)$ is an operator system is obvious.
\end{proof}

\begin{remark}
In the case when the multiplicative unitary $\ww$ is semi-regular, one
can show that $\ruc^u(\G)$ is in fact a $C^*$-algebra: the argument given
in \cite[Theorem 5.6]{HNR2} works also in this case (with some obvious
modifications that require, for example, equation \eqref{eq:peneq}).
\end{remark}

Similarly, we can define $\luc^u(\G)$ starting from the right
multiplicative unitary $\vv\in M(C_0(\dual \G')\ot C_0(\G))$
and its half-universal lift $\vV\in M(C_0(\dual \G')\ot C_0^u(\G))$.
That is,
\[
\luc^u(\G) = \clsp\set{(\phi\ot\id)\coP(x)}{\phi\in\lone(\G), x\in\linf(\G)}.
\]
where
\[
\coP(x) = \vV(x\ot 1)\vV^*
\]
(note that the right multiplicative unitary $\vv$ implements
the comultiplication of $\G$ via $\cop(a) = \vv(a\ot 1)\vv^*$).
The results concerning $\ruc^u(\G)$ and its dual have
obvious analogues for $\luc^u(\G)$.

We may view $\M^u(\G)$ as a subspace of
$\ruc^u(\G)^*$, via strict extension.

Every $\rho\in\ruc^u(\G)^*$ defines a map
$\lt_\rho\col \linf(\G)\to \linf(\G)$ by
\[
\phi\bigl(\lt_\rho(x)\bigr) = \rho\bigl((\id\ot \phi)\Cop(x)\bigr)
\qquad (\phi\in\lone(\G)).
\]
The convolution operators on the universal level are defined in the
following proposition.

\begin{prop} \label{prop:convo-ops}
\begin{rlist}
\item For $\omega\in M^u(\G)$, the map
  $R_\omega^u: M(C_0^u(\G)) \to M(C_0^u(\G))$,
  \[
  R_\omega^u(x) = (\id\ot\omega)\COP(x),  \qquad x\in M(C_0^u(\G)),
  \]
  maps $\ruc^u(\G)$ to $\ruc^u(\G)$.
\item For $\rho\in \ruc^u(\G)^*$, the equation 
  \begin{equation} \label{eq:L-def}
  \omega(L_\rho^u(x)) = \rho(R_\omega^u(x)),  \qquad \omega\in
  M^u(\G), x\in\ruc^u(\G),
  \end{equation}
  defines an operator $L_\rho^u:\ruc^u(\G)\to \ruc^u(\G)$.
  In the case when $\rho\in M^u(\G)$, we have
  $L_\rho^u(x) =  (\rho\ot\id)\COP(x)$
  and $L_\rho^u$ maps $C_0^u(\G)$ to $C_0^u(\G)$. 
\end{rlist}

\end{prop}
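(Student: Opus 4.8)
The plan is to reduce everything to the generators $(\id\ot\phi)\Cop(y)$ of $\ruc^u(\G)$ and to exploit the corepresentation property of the half-universal bicharacter. For (i) the essential computational input is the coaction-type identity $(\COP\ot\id)\Cop=(\id\ot\Cop)\Cop$ on $\linf(\G)$. I would derive it exactly as in the proof of the Lemma above: from $(\COP\ot\id)\Ww=\Ww_{13}\Ww_{23}$ (Proposition~6.1 of \cite{ku}) one obtains $(\COP\ot\id)\Cop(y)=\Ww_{23}^*\Ww_{13}^*(1\ot 1\ot y)\Ww_{13}\Ww_{23}$, and recognising the inner conjugation by $\Ww_{13}$ as $\Cop(y)$ in the legs $1,3$ and the outer conjugation by $\Ww_{23}$ as applying $\Cop$ in the last leg yields the claim. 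Since the slice $(\id\ot\phi)$ lives on a tensor leg disjoint from the one on which $\COP$ acts, $\COP$ commutes with it, so for a generator $x=(\id\ot\phi)\Cop(y)$ one computes
\[
R_\omega^u(x)=(\id\ot\omega)\COP(x)=(\id\ot\omega\ot\phi)(\id\ot\Cop)\Cop(y).
\]
Slicing the last two legs and using $\omega\bigl((\id\ot\phi)\Cop(m)\bigr)=(\omega\conv\phi)(m)$ collapses this to the clean formula
\[
R_\omega^u\bigl((\id\ot\phi)\Cop(y)\bigr)=(\id\ot(\omega\conv\phi))\Cop(y).
\]

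As $\lone(\G)$ is an ideal in $M^u(\G)$, we have $\omega\conv\phi\in\lone(\G)$, so the right-hand side is again a generator of $\ruc^u(\G)$. Because $R_\omega^u$ is bounded and $\ruc^u(\G)$ is the norm-closure of the span of the generators, linearity and continuity propagate this to all of $\ruc^u(\G)$, proving (i); that $R_\omega^u$ maps $M(C_0^u(\G))$ into itself is the standard strictness of slicing $\COP$ by a bounded functional.

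For (ii), for fixed $x\in\ruc^u(\G)$ the assignment $\omega\mapsto\rho(R_\omega^u(x))$ is a bounded functional on $M^u(\G)$ of norm at most $\|\rho\|\,\|x\|$, so it automatically defines an element $L_\rho^u(x)$ of $M^u(\G)^*=C_0^u(\G)^{**}$ with $\|L_\rho^u(x)\|\le\|\rho\|\,\|x\|$; the issue — which I expect to be the main obstacle — is to show that this abstractly defined element actually falls back into the much smaller space $\ruc^u(\G)$. I would settle this on generators and then pass to the closure. Given $y\in\linf(\G)$, the map $\psi\mapsto\rho\bigl((\id\ot\psi)\Cop(y)\bigr)$ is bounded on $\lone(\G)$, hence represented by some $z_{\rho,y}\in\linf(\G)$. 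Using the formula from (i),
\[
\rho\bigl(R_\omega^u((\id\ot\phi)\Cop(y))\bigr)=\rho\bigl((\id\ot(\omega\conv\phi))\Cop(y)\bigr)=(\omega\conv\phi)(z_{\rho,y})=\omega\bigl((\id\ot\phi)\Cop(z_{\rho,y})\bigr),
\]
so that $L_\rho^u\bigl((\id\ot\phi)\Cop(y)\bigr)=(\id\ot\phi)\Cop(z_{\rho,y})\in\ruc^u(\G)$. Since $\ruc^u(\G)$ is norm-closed and $L_\rho^u$ is contractive by the bound above, this identity on generators propagates to all of $\ruc^u(\G)$, yielding the operator $L_\rho^u\col\ruc^u(\G)\to\ruc^u(\G)$.

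Finally, when $\rho\in M^u(\G)=C_0^u(\G)^*$ the defining equation reads $\omega(L_\rho^u(x))=(\rho\ot\omega)\COP(x)=\omega((\rho\ot\id)\COP(x))$ for all $\omega$, whence $L_\rho^u(x)=(\rho\ot\id)\COP(x)$; this is left convolution by the measure $\rho$, and that it preserves $C_0^u(\G)$ is the standard fact that $C_0^u(\G)$ is a convolution module over $M^u(\G)$ (alternatively, one specialises the generator computation above to $y\in C_0(\G)$, checks that $z_{\rho,y}\in C_0(\G)$, and invokes part (i) of the Lemma). Thus the genuine difficulty is concentrated in the bidual-membership step of (ii), which the representing element $z_{\rho,y}$ resolves; the remainder is leg-numbering bookkeeping.
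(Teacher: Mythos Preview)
Your proof is correct and follows essentially the same approach as the paper. In particular, your representing element $z_{\rho,y}$ is precisely the map $L_\rho(y)$ that the paper has already introduced (via $\phi(L_\rho(y))=\rho((\id\ot\phi)\Cop(y))$), so your key identity $L_\rho^u((\id\ot\phi)\Cop(y))=(\id\ot\phi)\Cop(z_{\rho,y})$ is exactly the paper's $L_\rho^u((\id\ot\phi)\Cop(y))=(\id\ot\phi)\Cop(L_\rho(y))$.
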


\begin{proof}
  (i)
  For $\phi\in \lone(\G)$ and $y\in \linf(\G)$, we have
  \begin{align*}
  R_\omega^u((\id\ot\phi)\Cop(y)) &=
  (\id\ot \omega\ot \phi)(\COP\ot\id)\Cop(y) \\
   &= (\id\ot \omega\ot\phi)(\id\ot\Cop)\Cop(y) 
   =(\id\ot \omega\conv\phi)\Cop(y).
  \end{align*}
  Therefore $R_\omega^u$ maps $\ruc^u(\G)$ to $\ruc^u(\G)$.

  (ii)
  The equation \eqref{eq:L-def}
  defines an element $L_\rho^u(x)\in C_0^u(\G)^{**}$, so
  it is enough to check that in fact $L_\rho^u(x)\in \ruc^u(\G)$.
  To this end, let $x = (\id\ot\phi)\Cop(y)$ for 
$y\in \linf(\G)$ and $\phi\in \lone(\G)$. 
  Similarly as above, 
\begin{align*}
\omega(L_\rho^u(x)) &= \rho(R_\omega^u(x))
= \rho(R_\omega^u((\id\ot\phi)\Cop(y)))
= \rho((\id\ot\omega\conv\phi)\Cop(y))\\
&= \omega\conv\phi(L_\rho(y))
= (\omega\ot\phi)\Cop(L_\rho(y))
= \omega((\id\ot\phi)\Cop(L_\rho(y))).
\end{align*}
It follows that $L_\rho^u$ maps $\ruc^u(\G)$ to itself.
The second statement is obvious.
\end{proof}

Properties of $\ruc^u(\G)^*$ and the properties of the
convolution operators associated
with elements in $\ruc^u(\G)^*$ are collected in the following
proposition.

\begin{prop} \label{prop:collection}
\begin{rlist}
\item
  $\ruc^u(\G)^*$ is a Banach algebra under the convolution product
  \[
  \rho\conv \nu = \nu\circ L^u_\rho,
  \qquad \rho,\nu\in\ruc^u(\G)^*.  
  \]
\item $M^u(\G)$ is a closed subalgebra of $\ruc^u(\G)^*$. 
\item
  For every $\omega\in M^u(\G)$ and $\rho\in \ruc^u(\G)^*$,
  \[
  \rho\conv \omega  = \rho\circ R^u_\omega.
  \]
\item For every $\omega,\nu \in M^u(\G)$ and $\rho,\eta\in \ruc^u(\G)^*$,
  \[
  R_\omega^u \circ L_\rho^u = L_\rho^u\circ R_\omega^u, \qquad
  R_\omega^u \circ R_\nu^u = R^u_{\omega\conv \nu}, \qquad
  L_\eta^u\circ L_\rho^u = L_{\rho\conv \eta}^u.
  \]
    \item For every $\rho\in\ruc^u(\G)^*$,  
      \[
      \Lambda\circ \lt^u_\rho(x) = \lt_\rho \circ \Lambda(x),\qquad
      x\in\ruc^u(\G).
      \]
   \item For every $\rho, \eta\in\ruc^u(\G)^*$,  
     \[
     L_{\rho\conv \eta} = L_\eta\circ L_{\rho}. 
     \]
   \item For $\rho\in\ruc^u(\G)^*$, the map $L_{\rho}:\Linf(\G)\to \Linf(\G)$ is normal if and
     only if $\rho\in M^u(\G)$.
     \item Each of the maps $\omega\mapsto L_\omega$, $\omega \mapsto
       L^u_\omega$ and $\omega \mapsto R^u_\omega$
       \textup{(}the first two defined on $RUC^u(\G)^*$,
       the last one on $\MUG$\textup{)} are injective.
\end{rlist}
\end{prop}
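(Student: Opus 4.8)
The three injectivity statements all reduce to elementary bookkeeping once one brings in the universal counit. Recall that $C_0^u(\G)$ carries a bounded counit $\epsilon$, a $*$-character that is the identity of the convolution algebra $\MUG = C_0^u(\G)^*$ and satisfies the counit laws $(\epsilon\ot\id)\COP = \id = (\id\ot\epsilon)\COP$. In particular $\epsilon\in\MUG$, and $R_\epsilon^u(x) = (\id\ot\epsilon)\COP(x) = x$, so that $R_\epsilon^u = \id$ on $\ruc^u(\G)$ (indeed on all of $M(C_0^u(\G))$). The plan is to feed $\omega = \epsilon$ into the relevant defining relations in each of the three cases.

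For $\rho\mapsto L_\rho$ the statement is immediate from the definition of $\ruc^u(\G)$: if $L_\rho = 0$, then for every $\phi\in\lone(\G)$ and $x\in\linf(\G)$ we have $\rho\bigl((\id\ot\phi)\Cop(x)\bigr) = \phi\bigl(L_\rho(x)\bigr) = 0$. Since $\ruc^u(\G)$ is by definition the closed linear span of the elements $(\id\ot\phi)\Cop(x)$ and $\rho$ is continuous, this forces $\rho = 0$. For $\rho\mapsto L_\rho^u$ I would set $\omega = \epsilon$ in the defining equation \eqref{eq:L-def}, obtaining $\epsilon\bigl(L_\rho^u(x)\bigr) = \rho\bigl(R_\epsilon^u(x)\bigr) = \rho(x)$ for every $x\in\ruc^u(\G)$; hence $L_\rho^u = 0$ gives $\rho(x) = \epsilon(0) = 0$ for all $x$, so $\rho = 0$.

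For $\omega\mapsto R_\omega^u$ on $\MUG$ the counit again recovers $\omega$ from $R_\omega^u$: for $x\in C_0^u(\G)$ one applies $\epsilon$ to $R_\omega^u(x)$ and uses the counit law to get $\epsilon\bigl(R_\omega^u(x)\bigr) = (\epsilon\ot\omega)\COP(x) = \omega\bigl((\epsilon\ot\id)\COP(x)\bigr) = \omega(x)$. Thus $R_\omega^u = 0$ implies $\omega(x) = 0$ for all $x\in C_0^u(\G)$, i.e.\ $\omega = 0$.

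There is no deep obstacle here; the only point requiring a little care — and the nearest thing to a subtlety — is the legitimacy of slotting $\epsilon$ (and its normal/strict extension) into these leg-wise computations on the multiplier level, that is, that $\epsilon$ interacts with $\COP$ as expected on $M(C_0^u(\G))$ and on $\ruc^u(\G)\sub M(C_0^u(\G))$. This is guaranteed by the standard strict-continuity properties of the universal counit established in \cite{ku}, so the argument goes through without genuine difficulty.
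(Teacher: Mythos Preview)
Your argument for (viii) is correct. The paper itself dismisses this item with the single line ``Condition (viii) is easy to check directly from the definitions,'' so there is no detailed proof to compare against; your use of the universal counit $\epsilon$ to invert the defining relations is exactly the sort of direct check the authors have in mind, and your first injectivity (for $\rho\mapsto L_\rho$) is literally a restatement of the definition of $\ruc^u(\G)$. The only caveat you flag --- that $\epsilon$ and the counit law extend strictly to $M(C_0^u(\G))$ --- is indeed routine, since $\epsilon$ is a nondegenerate $*$-homomorphism and $\COP$ is strict-to-strict continuous.
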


\begin{proof}
(iv)
Here we check the first identity;
the second identity is easy to verify and the third identity
is proved in the next paragraph (because it needs the multiplication
on $\ruc^u(\G)^*$).
For $\omega, \nu\in M^u(\G)$, $\rho\in\ruc^u(\G)$ and
$x\in\ruc^u(\G)$, we have
\[
\nu(L_\rho^u\circ R_\omega^u(x)) =
\rho(R_\nu^u\circ R_\omega^u(x)) = \rho(R_{\nu\conv\omega}^u(x))
= \nu\conv\omega(L_\rho^u(x)) = \nu(R_\omega^u\circ L_\rho^u(x)).
\]

(i)
It follows from Proposition~\ref{prop:convo-ops} (ii) that
$\rho\conv\eta = \eta\circ L_\rho^u$ 
defines an element of $\ruc^u(\G)^*$
(note that $\ruc^u(\G)\subset M(C_0^u(\G))\subset C_0^u(\G)^{**}$).
For every $\rho,\eta, \nu \in \ruc^u(\G)$, we have by definition
\[
(\rho\conv \eta)\conv \nu = \nu\circ L_{\rho\conv \eta}^u. 
\]
Now for every $\omega \in M^u(\G)$  and $x\in\ruc^u(\G)$,
\[
\omega(L_{\rho\conv \eta}^u(x)) = \rho\conv \eta(R^u_\omega(x))
= \eta(L_\rho^u\circ R^u_\omega(x)) = \eta(R_\omega^u\circ L_\rho^u(x))
= \omega(L_\eta^u\circ L_\rho^u(x)).
\]
Hence $L_{\rho\conv \eta}^u = L_\eta^u\circ L_\rho^u$ and
\[
(\rho\conv \eta)\conv \nu = \nu\circ  L_\eta^u\circ L_\rho^u
= (\eta\conv \nu)\circ L_\rho^u = \rho\conv (\eta\conv \nu). 
\]
Therefore the convolution product on $\ruc^u(\G)^*$ is associative,
and clearly the norm is submultiplicative, so 
$\ruc^u(\G)^*$ is a Banach algebra.

(ii) is obvious and (iii) is immediate from
Proposition~\ref{prop:convo-ops} (ii). 
Also (v) is immediate and (vi) is similar to the analogous statement
    regarding  $L_\rho^u$ in (iv). Condition (viii) is easy to check directly from the definitions.

(vii)
If $\rho\in M^u(\G)$, then for every $\phi\in\lone(\G)$
and $x\in\linf(\G)$,
\[
\phi\bigl(\lt_\rho(x)\bigr) = \rho\bigl((\id\ot \phi)\Cop(x)\bigr)
= \rho\conv \phi(x).
\]
As $\rho\conv \phi\in \lone(\G)$, it follows that $L_\rho$ is
normal.

It remains to show that $L_\rho$ cannot be normal
if $\rho\in C_0^u(\G)^\perp\sm\{0\}$.
Now for every $a\in C_0^u(\G)$, we have
$L_\rho(\Lambda(a)) = \Lambda(L_\rho^u(a))$ and 
as noted in the proof of Corollary~\ref{cor:decomp},
$L_\rho^u(a) = 0$. Hence $L_\rho = 0$ on $C_0(\G)$.
If $L_\rho$ is normal, then $L_\rho = 0$ because $C_0(\G)$ is weak*-dense in
$\linf(\G)$, and consequently $\rho = 0$ by (viii). 
\end{proof}

  Sometimes we need to work on $C_0^u(\G)^{**}$ in which case we may
  consider normal maps $\wt{R_\omega^u}:C_0^u(\G)^{**}\to C_0^u(\G)^{**}$
  and $\wt{L_\omega^u}:C_0^u(\G)^{**}\to C_0^u(\G)^{**}$, given
  $\omega\in M^u(\G)$. These are defined by 
  \[
  \wt{R_\omega^u}(x) = (\id\ot \omega)\wt\COP(x)\quad\text{and}\quad
  \wt{L_\omega^u}(x) = (\omega\ot\id)\wt\COP(x).
  \]
  Note that  on $M(C_0^u(\G))$ the maps $\wt{R_\omega^u}$ and
  $\wt{L_\omega^u}$ agree  with the maps $R_\omega^u$ and
  $L_\omega^u$, respectively.

We will need one more result, essentially contained in Proposition 2.1 of \cite{BBMS}; here we provide a different proof.

\begin{prop}
  Let $A$ be a $C^*$-algebra. Then
  \[
  M(A)^* = A^*\oplus_1 A^\perp
  \]
  where as usual $A^*\subset M(A)^*$ is defined using strict extensions
  of functionals in $A^*$.
\end{prop}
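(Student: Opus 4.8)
The plan is to realise the whole decomposition inside the enveloping von Neumann algebra $A^{**}$, where a single central projection will do all the work. Recall that $M(A)$ sits as a unital $C^*$-subalgebra of $A^{**}$ (as the idealiser of $A$), and that for $f\in A^*=(A^{**})_*$ the strict extension $s(f)\in M(A)^*$ is simply the restriction to $M(A)$ of the normal functional $f$ on $A^{**}$. Granting this description, the \emph{algebraic} splitting is immediate: writing $r\colon M(A)^*\to A^*$ for restriction to $A$ one has $r\circ s=\id_{A^*}$ and $\ker r=A^\perp$, so every $\mu\in M(A)^*$ is $\mu=s(r\mu)+(\mu-s(r\mu))$ with $\mu-s(r\mu)\in A^\perp$, and $s(A^*)\cap A^\perp=\{0\}$. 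Hence the only real content of the statement is that this is an $\ell^1$-sum, i.e.\ $\|\mu\|=\|s(r\mu)\|+\|\mu-s(r\mu)\|$.

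To reach the isometric statement I would pass to $M(A)^{**}$. The inclusion $\iota\colon M(A)\hookrightarrow A^{**}$ is a $*$-homomorphism into a von Neumann algebra, so it extends uniquely to a normal unital $*$-homomorphism $\overline{\iota}\colon M(A)^{**}\to A^{**}$; as $\overline{\iota}$ is normal, its range is weak*-closed and contains the weak*-dense subalgebra $A$, so $\overline{\iota}$ is surjective. Its kernel is a weak*-closed two-sided ideal, hence equals $M(A)^{**}(1-z)$ for a central projection $z\in M(A)^{**}$, and $\overline{\iota}$ restricts to an isomorphism $M(A)^{**}z\cong A^{**}$.

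The crux is to identify $z$ with the support of the ideal $A$. The weak*-closure of $A$ in $M(A)^{**}$ is itself a weak*-closed two-sided ideal, hence a corner $M(A)^{**}e$ for a central projection $e$; and the canonical normal surjection $\beta\colon A^{**}\to M(A)^{**}e$ extending $A\hookrightarrow M(A)^{**}e$ satisfies $\overline{\iota}\circ\beta=\id_{A^{**}}$, since both sides are normal and restrict to the identity on the weak*-dense subalgebra $A$. Thus $\beta$ is an isomorphism and $\overline{\iota}$ restricts to an isomorphism $M(A)^{**}e\cong A^{**}$; comparing this with $\ker\overline{\iota}=M(A)^{**}(1-z)$ (injectivity on the corner gives $e\le z$, while $\overline{\iota}(e)=1=\overline{\iota}(1)$ gives $z\le e$) forces $e=z$. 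In particular $za=a$ for every $a\in A$.

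With this in hand the two descriptions of the splitting coincide. For a normal functional on $M(A)^{**}$ supported on $z$, say $g\circ\overline{\iota}$ with $g\in A^*$, restriction to $M(A)$ gives $m\mapsto g(\overline{\iota}(zm))=g(\iota(m))=s(g)(m)$, so the $z$-part of $M(A)^*$ is exactly $s(A^*)$; and since $za=a$, the complementary $(1-z)$-part annihilates $A$ and equals $A^\perp$ (indeed $\mu\in A^\perp$ forces its $z$-part $s(\mu|_A)$ to vanish). The additivity of norms is then the standard fact that a central projection splits the predual of a von Neumann algebra isometrically, giving $\|\mu\|=\|z\mu\|+\|(1-z)\mu\|$ and hence the asserted $A^*\oplus_1 A^\perp$. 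I expect the genuine obstacle to be exactly the identification $e=z$ (equivalently $za=a$ for $a\in A$): once the weak*-closure of $A$ is pinned down as the corner $M(A)^{**}z$, both the matching with the strict extension and the isometric additivity follow with little further effort.
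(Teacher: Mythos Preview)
Your proof is correct, but it takes a genuinely different route from the paper's. The paper argues directly at the level of $M(A)^*$: it first treats positive $\mu$ by showing, via an approximate identity, that $\tilde\mu_0\le\mu$, so that $\mu_1\ge 0$ and $\|\mu\|=\mu(1)=\|\tilde\mu_0\|+\|\mu_1\|$; for general $\mu$ it invokes the polar decomposition $\mu=\nu.u$ in $M(A)^{**}$, checks that $\tilde\nu_0.u=\tilde\mu_0$ (the strict continuity of $\tilde\nu_0.u$ being the one nontrivial step), and then reduces to the positive case. By contrast, you work entirely inside $M(A)^{**}$: you exhibit a single central projection $z$ --- the support of the ideal $A$, identified with the support of $\ker\bar\iota$ --- and obtain the $\ell^1$-splitting from the standard fact that a central projection splits the predual isometrically, together with the identification of the $z$-corner with $A^{**}$ via $\beta$. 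Your identification $e=z$ is clean and correct (injectivity of $\bar\iota$ on $M(A)^{**}e$ gives $e\le z$; $\bar\iota(e)=\bar\iota(\beta(1))=1=\bar\iota(1)$ gives $z\le e$), and the matching of $g\circ\bar\iota|_{M(A)}$ with the strict extension $s(g)$ is exactly the known description of $M(A)\hookrightarrow A^{**}$. The paper's argument is more elementary in that it never needs the full bidual $M(A)^{**}$ or the structure of its central ideals, only the polar decomposition and a Cohen factorisation; your argument is more structural and explains at once \emph{why} the decomposition is canonical (it is the predual splitting coming from a central projection), at the cost of the extra bookkeeping needed to pin down $z$.
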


\begin{proof}
  For every $\nu\in A^*$, denote its strict extension in $M(A)^*$
  by $\tilde \nu$. For every $\mu\in M(A)^*$ define $\mu_0 = \mu|_A$
  and $\mu_1 = \mu - \tilde\mu_0$. Then
  \[
  \mu = \tilde\mu_0 + \mu_1
  \]
  will be the required decomposition.

  Suppose first that $\mu$ is a positive functional.
  Then clearly also $\tilde\mu_0$ is positive.
  We claim that $\tilde\mu_0\le \mu$ so that also
  $\mu_1$ is positive. Let $x\ge 0$ in $M(A)$.
  If $(e_i)_{i \in \Ind}$ is an increasing, positive contractive approximate
  identity in $A$, then $(x^{1/2}e_i x^{1/2})_{i \in \Ind}$ is an increasing
  net converging strictly to $x$. Hence
  \[
  \tilde\mu_0(x) = \lim_{i \in \Ind} \mu_0(x^{1/2}e_i x^{1/2})
  = \lim_{i \in \Ind} \mu(x^{1/2}e_i x^{1/2}) \le \mu(x).
  \]

  So $\mu = \tilde\mu_0 + \mu_1$ is a decomposition of $\mu$ into
  positive components and so
  \[
  \|\mu\| = \mu(1) = \tilde\mu_0(1) + \mu_1(1) =
  \|\tilde\mu_0\| + \|\mu_1\|.
  \]
  Therefore the claim holds for positive $\mu$.

  Let now $\mu\in M(A)^*$ be arbitrary. Then $\mu$ has a
  (right) polar decomposition
  \[
  \mu = \nu.u
  \]
  where $\nu\in M(A)^*$ is positive and $u\in M(A)^{**}$
  is a partial isometry
  (the polar decomposition follows by considering $M(A)^*$ as the
  predual of the von Neumann algebra $M(A)^{**}$).

  We claim that $\tilde\nu_0.u = \tilde\mu_0$. 
  These functionals coincide on $A$ so it is enough to
  show that $\tilde\nu_0.u$ is strictly continuous.
  As there are two different extensions at play
  (from $A$ to $M(A)$ and from $M(A)$ to $M(A)^{**}$),
  it is clearer to use angle brackets to denote the duality
  of $M(A)^*$ and $M(A)^{**}$.
  Let $\nu_0 = a.\eta$ be a factorisation of $\nu_0$ on $A$
  ($\eta\in A^*$ and $a\in A$).
  The functional $\tilde\nu_0$ is defined by
  \[
  \tilde\nu_0(y) = \eta(ya), \qquad y\in M(A),
  \]
  so that by a weak*-approximation we have also $\tilde\nu_0(z) = \tilde\eta(za)$ for all $z \in M(A)^{**}$, where $\tilde\eta$ denotes the relevant weak*-extension.
  Let $(x_i)_{i \in \Ind}\subset M(A)$ converge strictly to $x$ in $M(A)$.
  Now
  \begin{align*}
  \la \tilde\nu_0, ux \ra &= \la \tilde\eta, uxa \ra
  = \la \tilde\eta, u(\lim_{i \in \Ind} x_i a) \ra
  = \lim_{i \in \Ind} \la \tilde\eta, u x_i a \ra
  = \lim_{i \in \Ind} \la \tilde\nu_0, u x_i \ra,
  \end{align*}
  which shows that $\tilde\nu_0.u$ is strictly continuous,
  as required.

  As $\tilde\nu_0.u = \tilde\mu_0$, it follows from
  the uniqueness of the decomposition that
  $\nu_1.u = \mu_1$.
  Then we have
  \begin{align*}
    \|\mu\| \le   \|\tilde\mu_0\| + \|\mu_1\|
    =   \|\tilde\nu_0.u\| + \|\nu_1.u\|
    \le \|\tilde\nu_0\| + \|\nu_1\|
    = \|\nu\|  = \|\mu\|,
  \end{align*}
  where we used the first part of the proof (applied to the positive
  functional $\nu$) and the fact that $\|\mu\| = \|\nu\|$.
\end{proof}

\begin{cor} \label{cor:decomp}
We have a decomposition 
\[
\ruc^u(\G)^* = M^u(\G) \oplus_1 C_0^u(\G)^{\perp}
\]
where $M^u(\G)$ is a subalgebra of $\ruc^u(\G)^*$
and  $C_0^u(\G)^{\perp}$ is a weak*-closed ideal in $\ruc^u(\G)^*$.
\end{cor}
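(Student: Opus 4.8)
The plan is to reduce the Banach-space decomposition to the preceding Proposition applied to the $C^*$-algebra $A=C_0^u(\G)$, and then to read off the algebraic structure from the convolution formula of Proposition~\ref{prop:collection}(i) together with the mapping properties of $R_\omega^u$ and $L_\rho^u$. Throughout I use that $C_0^u(\G)\subseteq\ruc^u(\G)\subseteq M(C_0^u(\G))$ (the first inclusion being the description of $C_0^u(\G)$ from the Lemma above), and that the embedding $M^u(\G)=C_0^u(\G)^*\hookrightarrow\ruc^u(\G)^*$ by strict extension is isometric, since restricting a strict extension back to $C_0^u(\G)$ recovers the original functional.

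For the Banach-space statement, given $\rho\in\ruc^u(\G)^*$ I would set $\rho_0=\rho|_{C_0^u(\G)}\in M^u(\G)$, let $\tilde\rho_0\in\ruc^u(\G)^*$ be its image under strict extension, and put $\rho_1=\rho-\tilde\rho_0$; since $\tilde\rho_0$ and $\rho$ agree on $C_0^u(\G)$, the functional $\rho_1$ lies in $C_0^u(\G)^\perp$, and uniqueness of the decomposition is clear because $\rho_0$ is determined by restriction and $M^u(\G)\cap C_0^u(\G)^\perp=\{0\}$. For the norm identity I would extend $\rho$ by Hahn--Banach to $\hat\rho\in M(C_0^u(\G))^*$ with $\|\hat\rho\|=\|\rho\|$ and apply the preceding Proposition to write $\hat\rho=\hat\rho_0+\hat\rho_1$ with $\|\hat\rho\|=\|\hat\rho_0\|+\|\hat\rho_1\|$; here $\hat\rho_0$ is the strict extension of $\rho_0$, so $\|\hat\rho_0\|=\|\tilde\rho_0\|$ and $\hat\rho_0|_{\ruc^u(\G)}=\tilde\rho_0$, while restricting $\hat\rho_1$ to $\ruc^u(\G)$ gives exactly $\rho_1$, whence $\|\hat\rho_1\|\ge\|\rho_1\|$. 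The sandwich $\|\rho\|=\|\hat\rho_0\|+\|\hat\rho_1\|\ge\|\tilde\rho_0\|+\|\rho_1\|\ge\|\rho\|$, the last step being the triangle inequality, then forces $\|\rho\|=\|\tilde\rho_0\|+\|\rho_1\|$ and hence the $\oplus_1$ decomposition.

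For the algebraic structure, $M^u(\G)$ is a closed subalgebra by Proposition~\ref{prop:collection}(ii), and $C_0^u(\G)^\perp$ is weak*-closed as an annihilator. To see it is a two-sided ideal under $\rho\conv\nu=\nu\circ L_\rho^u$, the crucial point — the one invoked in the proof of Proposition~\ref{prop:collection}(vii) — is that $L_\rho^u(a)=0$ whenever $\rho\in C_0^u(\G)^\perp$ and $a\in C_0^u(\G)$. Indeed, for $\omega\in M^u(\G)$ one has $\omega(L_\rho^u(a))=\rho(R_\omega^u(a))$, and $R_\omega^u$ maps $C_0^u(\G)$ into $C_0^u(\G)$: the computation in the proof of Proposition~\ref{prop:convo-ops}(i) gives $R_\omega^u((\id\ot\phi)\Cop(a))=(\id\ot\omega\conv\phi)\Cop(a)$, which stays in $C_0^u(\G)$ by the Lemma above since $\omega\conv\phi\in\lone(\G)$; hence $\rho(R_\omega^u(a))=0$, and as $M^u(\G)=C_0^u(\G)^*$ separates points of $C_0^u(\G)^{**}\supseteq\ruc^u(\G)$ we conclude $L_\rho^u(a)=0$. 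With this in hand, for $\rho_1\in C_0^u(\G)^\perp$, $\nu\in\ruc^u(\G)^*$ and $a\in C_0^u(\G)$ we get $(\rho_1\conv\nu)(a)=\nu(L_{\rho_1}^u(a))=0$; and writing $\nu=\nu_0+\nu_1$ along the decomposition, $L_\nu^u(a)=L_{\nu_0}^u(a)+L_{\nu_1}^u(a)\in C_0^u(\G)$ (by linearity of $\rho\mapsto L_\rho^u$, using $L_{\nu_0}^u(C_0^u(\G))\subseteq C_0^u(\G)$ from Proposition~\ref{prop:convo-ops}(ii) and $L_{\nu_1}^u(a)=0$), so also $(\nu\conv\rho_1)(a)=\rho_1(L_\nu^u(a))=0$. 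Hence both $\rho_1\conv\nu$ and $\nu\conv\rho_1$ lie in $C_0^u(\G)^\perp$, which is therefore a two-sided ideal.

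I expect the only genuine obstacle to be the passage from the multiplier algebra $M(C_0^u(\G))$ to the intermediate operator system $\ruc^u(\G)$: since $\ruc^u(\G)$ need not be a $C^*$-algebra one cannot simply repeat the proof of the preceding Proposition there (square roots of positive elements may leave $\ruc^u(\G)$), and it is precisely the Hahn--Banach sandwich that upgrades the norm-preserving splitting to $\ruc^u(\G)^*$. Everything else is bookkeeping with the definitions, modulo the standard fact that the operators $R_\omega^u$ preserve $C_0^u(\G)$.
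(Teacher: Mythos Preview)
Your proof is correct and follows essentially the same approach as the paper. The paper's proof in fact only writes out the ideal argument (which matches yours), treating the $\oplus_1$ decomposition as immediate from the preceding Proposition; your Hahn--Banach sandwich, passing from $\ruc^u(\G)^*$ up to $M(C_0^u(\G))^*$ and back, makes explicit the step needed to transfer the norm identity from $M(A)^*$ to the dual of the intermediate operator system, and your handling of the right-ideal direction (decomposing $\nu=\nu_0+\nu_1$ to see $L_\nu^u(a)\in C_0^u(\G)$) is slightly more careful than the paper's one-line assertion.
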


\begin{proof}
  It remains to show that $C_0^u(\G)^{\perp}$ is an ideal.
  Suppose first that  $\mu,\nu\in\ruc^u(\G)^*$
  with $\mu\in C_0^u(\G)^{\perp}$. For every $\omega\in \ruc^u(\G)^*$
  and $a\in C_0^u(\G)$,
  we have
  \[
  \omega(L_\mu^u(a)) = \mu(R_\omega^u(a)) = 0 
  \]
  as $R_\omega^u(a) \in C_0^u(\G)$, and so $L_\mu^u(a) = 0$. It
  follows that 
  $\mu\conv \nu = \nu\circ L_\mu^u\in C_0^u(\G)^\perp$.

  It remains to check the case when $\mu\in\ruc^u(\G)^*$ and
  $\nu\in C_0^u(\G)^\perp$. But now
  $L_\mu^u(a)\in C_0^u(\G)$ whenever $a\in C_0^u(\G)$,
  and so $\mu\conv\nu \in C_0^u(\G)^\perp$.
\end{proof}

\begin{lem}  \label{idempalternative}
If $\omega \in \ruc^u(\G)^*$ is a contractive idempotent, then
either $\omega \in M^u(\G)$ or $\omega \in C_0^u(\G)^{\perp}$.
\end{lem}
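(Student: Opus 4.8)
The plan is to combine the $\ell^1$-decomposition of Corollary~\ref{cor:decomp} with the elementary fact that a non-zero idempotent in a Banach algebra has norm at least $1$. First I would write $\omega = \omega_0 + \omega_1$ according to the decomposition $\ruc^u(\G)^* = M^u(\G) \oplus_1 C_0^u(\G)^{\perp}$, so that $\omega_0 \in M^u(\G)$, $\omega_1 \in C_0^u(\G)^{\perp}$, and crucially $\|\omega\| = \|\omega_0\| + \|\omega_1\|$.

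Next I would compute $\omega \conv \omega$ in terms of this decomposition. Since the convolution on $\ruc^u(\G)^*$ is bilinear (Proposition~\ref{prop:collection}(i)), expanding the product gives the four terms $\omega_0 \conv \omega_0$, $\omega_0 \conv \omega_1$, $\omega_1 \conv \omega_0$, and $\omega_1 \conv \omega_1$. Because $M^u(\G)$ is a subalgebra and $C_0^u(\G)^{\perp}$ is an ideal (both by Corollary~\ref{cor:decomp}), the term $\omega_0 \conv \omega_0$ lies in $M^u(\G)$ while the remaining three terms all lie in $C_0^u(\G)^{\perp}$. Applying uniqueness of the decomposition to the identity $\omega \conv \omega = \omega$ and comparing $M^u(\G)$-components, I would conclude $\omega_0 \conv \omega_0 = \omega_0$; in other words $\omega_0$ is itself an idempotent in $M^u(\G)$.

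The decisive step is then a norm estimate. If $\omega_0 \neq 0$, then as a non-zero idempotent it satisfies $\|\omega_0\| = \|\omega_0 \conv \omega_0\| \le \|\omega_0\|^2$, hence $\|\omega_0\| \ge 1$. Combined with contractivity, $\|\omega_0\| + \|\omega_1\| = \|\omega\| \le 1$, this forces $\|\omega_1\| = 0$, so $\omega_1 = 0$ and $\omega = \omega_0 \in M^u(\G)$. In the remaining case $\omega_0 = 0$, whence $\omega = \omega_1 \in C_0^u(\G)^{\perp}$. Either way the stated dichotomy holds.

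I do not expect a genuine obstacle here: the whole argument rests on pinning down the $M^u(\G)$- and $C_0^u(\G)^{\perp}$-components of the product, after which the conclusion drops out of the $\ell^1$-additivity of the norm together with the idempotent norm bound. The only point needing a word of care is verifying that the off-diagonal terms and the $\omega_1 \conv \omega_1$ term genuinely fall into the ideal $C_0^u(\G)^{\perp}$, but this is precisely the ideal property recorded in Corollary~\ref{cor:decomp}.
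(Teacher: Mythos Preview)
Your proposal is correct and follows essentially the same argument as the paper: decompose $\omega$ via Corollary~\ref{cor:decomp}, use the ideal property to see that $\omega_0$ is itself an idempotent, and then combine the $\ell^1$-additivity of the norm with the trivial lower bound $\|\omega_0\|\ge 1$ for a non-zero idempotent to force the dichotomy. The only cosmetic difference is that the paper phrases the endgame as ``$\|\omega_0\|$ is either $0$ or $1$'' rather than splitting into the cases $\omega_0=0$ and $\omega_0\neq 0$, but the content is identical.
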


\begin{proof}
  Write $\omega = \omega_0 + \omega_1$ where $\omega_0\in M^u(\G)$
  and $\omega_1\in C_0^u(\G)^\perp$. 
  Then
  \[
  \omega_0 + \omega_1 = \omega = \omega\conv\omega =
  \omega_0 \conv\omega_0 + \omega_0 \conv \omega_1
  + \omega_1 \conv \omega_0 + \omega_1\conv\omega_1.
  \]
  Since $C_0(\G)^{\perp}$ is an ideal, it follows that
  $\omega_0 \conv\omega_0 = \omega_0$.
  Now $\|\omega\| = \|\omega_0\| + \|\omega_1\|$ by
  Corollary~\ref{cor:decomp}, so $\omega_0$ is a contractive idempotent. 
  Hence $\|\omega_0\|$ is either $0$ or $1$. If $\|\omega_0\| = 0$,
  then $\omega = \omega_1\in  C_0^u(\G)^{\perp}$.
  If $\|\omega_0\| = 1$, then $\|\omega_1\| = 0$ and
  $\omega = \omega_0 \in M^u(\G)$.
\end{proof}


\section{Fixed points in $\linf(\G)$ }


Let $\G$ be a locally compact quantum group.
For a map $L\col \linf(\G)\to \linf(\G)$, write
\[
\Fix L = \set{x\in \linf(\G)}{L(x) = x}.
\]
We are mostly interested in $\Fix L_\omega$ with $\omega\in M^u(\G)_1$,
in which case $\Fix L_\omega$ is weak*-closed. 
Note that if $\|\omega\|<1$, then $\Fix L_\omega = \{0\}$,
so we may concentrate on the case $\|\omega\|=1$.

Given $\omega \in M^u(\QG)_1$ and $n \in \N$ we write
\[
S_n(\omega) = \frac{1}{n} \sum_{k=1}^{n} \omega^{\conv k}\in M^u(\G)_1.
\]
Fix a free ultrafilter $p$ on $\N$ and take the weak$^*$ limit of the
sequence $S_n(\omega)$ along $p$ in $\ruc^u(\G)^*$. We will denote
this limit by $\wt{\omega}_{p}$ (later we will simply write
$\wt\omega$ once $p$ is fixed).

\begin{lem}\label{statelimits}
Let $\omega \in M^u(\G)_1$ and let $p$ be a free ultrafilter.
Then
\[
\wt{\omega}_{p} = \omega\conv\wt{\omega}_{p}
= \wt{\omega}_{p}\conv \omega
= \wt{\omega}_{p}\conv \wt{\omega}_{p}.
\]
In particular $\wt{\omega}_{p}$ is either 0 or a contractive
idempotent. Finally, the following conditions are also equivalent:
\begin{rlist}
\item $\wt{\omega}_{p}$ is a state for some free ultrafilter $p$;
\item $\wt{\omega}_{p}$ is a state for every free ultrafilter $p$;
\item $\omega$ is a state.
\end{rlist}
\end{lem}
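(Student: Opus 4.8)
The plan is to derive the three convolution identities from a single Ces\`aro telescoping computation together with the weak${}^*$-continuity of convolution, and then to read off idempotency and the equivalences. First I would record that
\[
\omega\conv S_n(\omega) = S_n(\omega)\conv\omega = S_n(\omega) + \tfrac{1}{n}\bigl(\omega^{\conv(n+1)}-\omega\bigr),
\]
so the two one-sided convolutions of $S_n(\omega)$ by $\omega$ differ from $S_n(\omega)$ by a term of norm at most $2/n$. Since $\omega\in M^u(\G)$, the maps $\nu\mapsto\omega\conv\nu=\nu\circ L_\omega^u$ and $\nu\mapsto\nu\conv\omega=\nu\circ R_\omega^u$ (the latter via Proposition~\ref{prop:collection}(iii)) are adjoints of bounded operators on $\ruc^u(\G)$, hence weak${}^*$-continuous; applying them to $\wt{\omega}_p=\wlim_p S_n(\omega)$ and using that the error term vanishes in norm yields $\omega\conv\wt{\omega}_p=\wt{\omega}_p=\wt{\omega}_p\conv\omega$. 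For the remaining identity I would iterate $\wt{\omega}_p\conv\omega=\wt{\omega}_p$, using associativity of the Banach algebra $\ruc^u(\G)^*$ (Proposition~\ref{prop:collection}(i)), to get $\wt{\omega}_p\conv\omega^{\conv k}=\wt{\omega}_p$ for every $k$, whence $\wt{\omega}_p\conv S_n(\omega)=\wt{\omega}_p$ for all $n$; the weak${}^*$-continuous map $\nu\mapsto\wt{\omega}_p\conv\nu=\nu\circ L_{\wt{\omega}_p}^u$ then sends $\wt{\omega}_p$ to $\wt{\omega}_p\conv\wt{\omega}_p=\wt{\omega}_p$. Combined with $\|\wt{\omega}_p\|\le 1$, which follows from weak${}^*$-lower semicontinuity of the norm and $\|S_n(\omega)\|\le 1$, this makes $\wt{\omega}_p$ a contractive idempotent, so it is either $0$ or of norm one.

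For the equivalences, (ii)$\Rightarrow$(i) is immediate. For (iii)$\Rightarrow$(ii) I would use that $\ruc^u(\G)$ is a unital operator system, so its state space is weak${}^*$-compact, hence weak${}^*$-closed. As $\COP$ is a unital $*$-homomorphism, the convolution of states is a state, so if $\omega$ is a state then every $\omega^{\conv k}$, and hence every convex combination $S_n(\omega)$, restricts to a state on $\ruc^u(\G)$ (via the strict extension through which $M^u(\G)$ sits in $\ruc^u(\G)^*$); the limit $\wt{\omega}_p$ is then a state as well, for every $p$.

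The implication (i)$\Rightarrow$(iii) is the heart of the matter, and where I expect the only real obstacle: one must transport statehood from the limit $\wt{\omega}_p$ back to $\omega$ itself. The mechanism I would use is to evaluate the fixed-point equation $\omega\conv\wt{\omega}_p=\wt{\omega}_p$ at the unit $1$ of $\ruc^u(\G)$. Since $\omega\in M^u(\G)$ and $\COP$ is unital, Proposition~\ref{prop:convo-ops}(ii) gives $L_\omega^u(1)=(\omega\ot\id)\COP(1)=\omega(1)\,1$, so that
\[
1=\wt{\omega}_p(1)=(\omega\conv\wt{\omega}_p)(1)=\wt{\omega}_p\bigl(L_\omega^u(1)\bigr)=\omega(1)\,\wt{\omega}_p(1)=\omega(1),
\]
where $\omega(1)$ is the value of the strict extension of $\omega$ at the unit and $\wt{\omega}_p(1)=1$ because $\wt{\omega}_p$ is a state. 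As the strict extension is isometric (by the $\oplus_1$ decomposition preceding Corollary~\ref{cor:decomp}), we have $\|\omega\|=1=\omega(1)$, and a contractive functional on the unital $C^*$-algebra $M(C_0^u(\G))$ attaining its norm at the unit is automatically a state; restricting to $C_0^u(\G)$ shows that $\omega$ is a state, which closes the cycle of implications.
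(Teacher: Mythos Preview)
Your proof is correct and follows essentially the same approach as the paper: the telescoping identity $\omega\conv S_n(\omega)=S_n(\omega)\conv\omega=S_n(\omega)+\tfrac1n(\omega^{\conv(n+1)}-\omega)$ for the convolution relations, and evaluation of $\omega\conv\wt{\omega}_p=\wt{\omega}_p$ at $1$ for (i)$\Rightarrow$(iii). You have simply spelled out more of the details (weak${}^*$-continuity via adjoints of $L_\omega^u$, $R_\omega^u$, $L_{\wt{\omega}_p}^u$; the passage from $\omega(1)=1=\|\omega\|$ to positivity) that the paper leaves implicit.
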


\begin{proof}
The first statement follows from the identity
\begin{equation} \label{eq:absorb}
  S_n(\omega)\conv \omega =
  \omega\conv S_n(\omega) = S_n(\omega) - \frac1n\omega + \frac1n\omega^{n+1}.
\end{equation}

The implications (iii)$\implies$(ii)$\implies$ (i) are obvious.
Suppose that $\wt{\omega}_{p}$ is a state.
Then 
\[
1 = \wt{\omega}_{p}(1) =
\omega\conv \wt{\omega}_{p} (1) = \omega(1)\wt{\omega}_{p}(1)=
\omega(1)
\]
so (iii) follows.
\end{proof}

\begin{lem} \label{lem:fix_L}
  Let $\omega \in M^u(\G)_1$ and let $p$ be a free ultrafilter.
  The map $L_{\wt{\omega}_p}$ is a
  projection onto $\Fix L_{\omega} = \Fix L_{\wt{\omega}_{p}}$.  
\end{lem}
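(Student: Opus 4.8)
The plan is to split the assertion into three pieces: that $L_{\wt\omega_p}$ is idempotent, that $\Fix L_\omega = \Fix L_{\wt\omega_p}$, and that an idempotent linear map is automatically a projection onto its set of fixed points. Throughout I would abbreviate $\wt\omega = \wt\omega_p$ and lean on the anti-multiplicativity of $\rho\mapsto L_\rho$ recorded in Proposition~\ref{prop:collection}(vi), namely $L_{\rho\conv\eta} = L_\eta\circ L_\rho$, together with the three absorption identities $\wt\omega = \omega\conv\wt\omega = \wt\omega\conv\omega = \wt\omega\conv\wt\omega$ supplied by Lemma~\ref{statelimits}.

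First I would verify idempotency. Applying (vi) with $\rho=\eta=\wt\omega$ and using $\wt\omega\conv\wt\omega = \wt\omega$ yields $L_{\wt\omega} = L_{\wt\omega\conv\wt\omega} = L_{\wt\omega}\circ L_{\wt\omega}$, so $L_{\wt\omega}$ is idempotent. The same mechanism applied to the remaining two absorption identities produces $L_{\wt\omega} = L_{\wt\omega}\circ L_\omega$ and $L_{\wt\omega} = L_\omega\circ L_{\wt\omega}$; these are the two relations I would exploit to compare the fixed-point sets.

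The inclusion $\Fix L_{\wt\omega}\subseteq \Fix L_\omega$ is then immediate from $L_{\wt\omega} = L_\omega\circ L_{\wt\omega}$: if $L_{\wt\omega}(x)=x$, then $L_\omega(x) = L_\omega(L_{\wt\omega}(x)) = L_{\wt\omega}(x) = x$. For the reverse inclusion I would take $x\in\Fix L_\omega$, observe that iterating (vi) gives $L_{\omega^{\conv k}} = L_\omega^k$, so that $L_{\omega^{\conv k}}(x) = x$ for every $k$ and hence $L_{S_n(\omega)}(x) = x$ for every $n$. It then remains to pass this to the limit, and this is the one genuinely analytic step where I expect the care to lie: since $\wt\omega$ is the weak$^*$-limit of $S_n(\omega)$ along $p$ in $\ruc^u(\G)^*$, and since for fixed $x\in\linf(\G)$ and $\phi\in\lone(\G)$ the map $\rho\mapsto \phi(L_\rho(x)) = \rho\bigl((\id\ot\phi)\Cop(x)\bigr)$ is, by the very definition of $L_\rho$, evaluation of $\rho$ at the fixed element $(\id\ot\phi)\Cop(x)\in\ruc^u(\G)$, it is weak$^*$-continuous. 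Consequently $\phi(L_{\wt\omega}(x)) = \lim_p \phi(L_{S_n(\omega)}(x)) = \phi(x)$ for all $\phi$, whence $L_{\wt\omega}(x) = x$ and $x\in\Fix L_{\wt\omega}$. Everything outside this limit argument is purely formal.

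Finally, having shown that $L_{\wt\omega}$ is idempotent with $\Fix L_{\wt\omega} = \Fix L_\omega$, I would invoke the elementary fact that the range of an idempotent linear map coincides with its fixed-point set: if $y = L_{\wt\omega}(x)$ then $L_{\wt\omega}(y) = L_{\wt\omega}^2(x) = L_{\wt\omega}(x) = y$, so $\Ran L_{\wt\omega}\subseteq \Fix L_{\wt\omega}$, and the reverse inclusion is trivial. Hence $L_{\wt\omega}$ is a projection whose range is exactly $\Fix L_{\wt\omega} = \Fix L_\omega$, which is precisely the claim.
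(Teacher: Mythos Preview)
Your proof is correct and follows essentially the same approach as the paper: both use the absorption identities from Lemma~\ref{statelimits} together with the weak$^*$-continuity of $\rho\mapsto\rho\bigl((\id\ot\phi)\Cop(x)\bigr)$ to compare the two fixed-point sets, and then invoke idempotency of $\wt\omega_p$ to conclude. Your argument for $\Fix L_{\wt\omega}\subseteq\Fix L_\omega$ via $L_{\wt\omega}=L_\omega\circ L_{\wt\omega}$ is in fact slightly cleaner than the paper's more explicit computation, but the underlying ideas are identical.
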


\begin{proof}
To see that $L_{\wt{\omega}_{p}}$ and $L_\omega$ have the same
fixed points, 
note first that for all $\phi\in\lone(\G)$ and $x\in\linf(\G)$
\[
\phi\bigl(L_{\wt{\omega}_{p}}(x)\bigr) = 
\wt{\omega}_{p}\bigl((\id\ot\phi)\Cop(x)\bigr) = 
\plim S_n(\omega)\bigl((\id\ot\phi)\Cop(x)\bigr).
\]
It is immediate that $\Fix L_{\omega} \subset \Fix
L_{\wt{\omega}_{p}}$. On the other hand if $x$ is a fixed point 
of $L_{\wt{\omega}_{p}}$, then it follows from the above
identity that 
\begin{align*}
\phi\bigl(L_{\omega}(x)\bigr) 
&=\phi\bigl(L_\omega(L_{\wt{\omega}_{p}}(x))\bigr)
= \phi(L_{\wt\omega_p\conv\omega}(x))
=\wt\omega_p\conv\omega\bigl((\id\ot\phi)\Cop(x)\bigr) \\
&= \plim
 S_n(\omega)\bigl((\id\ot(\omega\conv\phi))\Cop(x)\bigr)
= \plim \bigl(S_n(\omega)\conv
\omega\bigr)\bigl((\id\ot\phi)\Cop(x)\bigr) \\
&= \phi\bigl(L_{\wt\omega_{p}}(x)\bigr) = \phi(x) 
\end{align*}
by \eqref{eq:absorb}.

Since $\wt\omega_p$ is either a non-zero contractive idempotent or $0$,
it follows that $L_{\wt\omega_p}$ is an idempotent whose image is $\Fix L_{\wt\omega_p}$.
\end{proof}

The following theorem characterises the case when there
are no non-zero fixed points. Note that convolution operators determined by
\emph{states} always have fixed points (namely, the constants)
but this need not be the case with contractive functionals.

\begin{tw} \label{lemma:cesaro-nilpotent}
Let $\omega\in M^u(\G)_1$. The following are equivalent:
\begin{rlist}
\item $\lt_\omega$ has no non-zero fixed points in $\linf(\G)$;
\item $\lt_\omega$ has no non-zero fixed points in $\ruc(\G)$;
\item $\lt_\omega^u$ has no non-zero fixed points in $\ruc^u(\G)$;
\item $S_n(\omega)\to 0$ weak* in $\ruc^u(\G)^*$;
\item $\wt{\omega}_p= 0$ for all free ultrafilters $p$;
\item $\wt{\omega}_p= 0$ for some free ultrafilter $p$.
\end{rlist}
\end{tw}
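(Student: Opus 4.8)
The plan is to split the six conditions into one cluster, (i), (iii), (iv), (v) and (vi), which are all controlled directly by the idempotent $\wt\omega_p$ and its associated projections, and to treat the equivalence (i)$\iff$(ii) — the genuine passage between $\linf(\G)$ and its subspace $\ruc(\G)$ — by a separate smoothing argument. For the first cluster the two workhorses are Lemma~\ref{lem:fix_L} together with its verbatim universal counterpart, namely that $L^u_{\wt\omega_p}$ is a projection of $\ruc^u(\G)$ onto $\Fix L^u_\omega=\Fix L^u_{\wt\omega_p}$, proved by repeating the computation of Lemma~\ref{lem:fix_L} with the pairing $\nu(L^u_\rho(x))=\rho(R^u_\nu(x))$ of \eqref{eq:L-def} and the absorption identity \eqref{eq:absorb}, and the injectivity of $\omega\mapsto L_\omega$ and of $\omega\mapsto L^u_\omega$ supplied by Proposition~\ref{prop:collection}(viii).

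First I would settle the cluster. The implication (v)$\implies$(vi) is trivial, while (vi)$\implies$(i) follows since $\wt\omega_p=0$ makes $L_{\wt\omega_p}=0$, and by Lemma~\ref{lem:fix_L} this zero map is a projection onto $\Fix L_\omega$, forcing $\Fix L_\omega=\{0\}$. Conversely (i)$\implies$(v): if $\Fix L_\omega=\{0\}$, then for every free ultrafilter $q$ the projection $L_{\wt\omega_q}$ has range $\{0\}$, so $L_{\wt\omega_q}=0$, and injectivity gives $\wt\omega_q=0$. Running exactly this argument on the universal level, with the universal counterpart of Lemma~\ref{lem:fix_L} and the injectivity of $\omega\mapsto L^u_\omega$, yields (iii)$\iff$(v). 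Finally (iv)$\iff$(v) is a soft compactness statement: $\bigl(S_n(\omega)\bigr)_n$ lies in the weak*-compact unit ball of $\ruc^u(\G)^*$, in a compact Hausdorff space a sequence converges to a point precisely when that point is its unique cluster point, and the cluster points of a sequence are exactly its limits along the free ultrafilters; thus $S_n(\omega)\to0$ weak* if and only if $\wt\omega_p=0$ for all $p$.

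It remains to attach condition (ii). Here (i)$\implies$(ii) is immediate from $\ruc(\G)\sub\linf(\G)$. The substantial direction is (ii)$\implies$(i), which I would prove contrapositively: starting from a nonzero $x\in\linf(\G)$ with $L_\omega(x)=x$, I must produce a nonzero fixed point inside $\ruc(\G)$. The device is right convolution: for $\phi\in\lone(\G)$ put $R_\phi(x)=(\id\ot\phi)\cop(x)$, which lies in $\ruc(\G)$ by the very description $\ruc(\G)=\clsp\set{(\id\ot\phi)\cop(y)}{\phi\in\lone(\G),y\in\linf(\G)}$. Because left and right convolutions commute — the $\linf(\G)$-incarnation of the first identity in Proposition~\ref{prop:collection}(iv), i.e.\ coassociativity — we get $L_\omega(R_\phi(x))=R_\phi(L_\omega(x))=R_\phi(x)$, so $R_\phi(x)$ is again fixed by $L_\omega$. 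Since $\cop$ is isometric, hence injective, and $\lone(\G)$ separates the points of $\linf(\G)$, one has $R_\phi(x)\neq0$ for some $\phi$, and this is the required nonzero fixed point in $\ruc(\G)$, giving the failure of (ii) from the failure of (i).

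I expect this last step to be the main obstacle, precisely because $\Fix L_\omega$ need \emph{not} be contained in $\ruc(\G)$: already when $\G$ is a locally compact group and $\omega$ is a point mass, $L_\omega$ is a translation whose fixed points are arbitrary bounded functions invariant under that translation (for instance discontinuous periodic functions on $\R$), which are not uniformly continuous. A naive ``fixed points live in $\ruc(\G)$'' inclusion therefore fails, and one is forced to smooth an arbitrary fixed point by convolution and then check that the smoothing can be chosen not to annihilate it. The only other point needing care is the cluster-point/ultrafilter identification used for (iv)$\iff$(v), but this is standard once weak*-compactness of the unit ball of $\ruc^u(\G)^*$ is invoked.
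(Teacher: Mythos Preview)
Your proof is correct and follows essentially the same route as the paper: the smoothing argument for (ii)$\implies$(i) is identical to the paper's, and your handling of the cluster (i), (iii)--(vi) via Lemma~\ref{lem:fix_L}, its universal analogue, and the injectivity in Proposition~\ref{prop:collection}(viii) packages exactly the computations the paper performs inline for (i)$\implies$(v) and (vi)$\implies$(iii). The only cosmetic difference is that you state the universal version of Lemma~\ref{lem:fix_L} explicitly, whereas the paper rederives the needed piece of it on the spot in the proof of (vi)$\implies$(iii).
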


\begin{proof}
(iii) $\implies$ (i): Let $x$ be a non-zero fixed point of $\lt_\omega$
in $\linf(\G)$ and pick $\phi\in\lone(\G)$ such that
$\tilde x:= (\id\ot\phi)\Cop(x)\ne 0$.
Then $\tilde x\in\ruc^u(\G)$ and for every $\mu\in\ruc^u(\G)^*$
\begin{align*}
\mu\bigl(\lt_\omega^u(\tilde x)\bigr)
&= \omega\conv \mu\bigl( (\id\ot \phi)\Cop(x) \bigr)
= \phi\bigl( L_{\omega\conv \mu}(x) \bigr)
= \phi\bigl( L_\mu\circ L_{\omega}(x) \bigr)
= \phi\bigl( L_\mu(x) \bigr)
= \mu(\tilde x).
\end{align*}
So $\tilde x$ is a non-zero fixed point of $\lt_\omega^u$ in $\ruc^u(\G)$.

(ii) $\implies$ (i) is similar to (iii) $\implies$ (i).
The converse implication (i) $\implies$ (ii) is trivial.

The implications  (iv) $\iff$ (v) $\implies$ (vi) are obvious.

(i) $\implies$ (v):
If $\wt{\omega}_p\ne 0$, there is $x\in \linf(\G)$ such that
$\lt_{\wt{\omega}_p}(x)\ne 0$. Moreover,
$\lt_\omega\bigl(\lt_{\wt{\omega}_p}(x)) = \lt_{\wt{\omega}_p}(x)$.

(vi) $\implies$ (iii):
Suppose that there exist a non-zero fixed point $x$ of
$\lt_\omega^u$ in $\ruc^u(\G)$
and pick $\nu\in M^u(\G)$ such that $\nu(x)\ne 0$
(recall that $\ruc^u(\G)\subset M(C_0^u(\G))\subset C^u_0(\G)^{**}$).
Then using Proposition~\ref{prop:collection}, we have
\begin{align*}  
\nu\bigl(\lt_{\wt\omega_p}^u(x)\bigr)
&=\wt\omega_p\conv \nu(x)
= \wt\omega_p\bigl(R_\nu^u(x))
=\plim S_n(\omega)\bigl(R_\nu^u(x)\bigr)\\
&=\plim \frac 1n \sum_{k=1}^n \nu(\lt_{\omega^{\conv k}}^u(x))
= \nu(x).
\end{align*}
It follows that $\wt\omega_p\ne 0$.
\end{proof}

The \emph{right absolute value} of $\omega\in M^u(\G)$ is
a positive functional $|\omega|$ in $M^u(\G)$ defined through
the polar decomposition $\omega = |\omega|(\cdot u)$  where
$u\in C_0^u(\G)^{**}$ is a partial isometry satisfying some extra properties;
see Definition III.4.3 of \cite{takesaki:vol1}. 
Then the \emph{left absolute value} $|\omega|_\ell$ is
defined by $\omega = |\omega|_\ell(u^*\cdot )$.

\begin{lem}\label{lem:abs}
  Let $M$ be a von Neumann algebra and $\nu\in M_*$ with $\|\nu\| = 1$.
  Suppose that $x\in M$ is such that $\|x\|=1$ and $\nu(x^*) = 1$.
  Then $\nu = x.|\nu|$ and $|\nu| = x^*.\nu$.
\end{lem}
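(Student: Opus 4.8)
The plan is to realise the two claimed identities as expressions of the equality case in the Cauchy--Schwarz inequality attached to $|\nu|$, thereby showing that the norming vector $x$ implements the polar decomposition in the same way the canonical partial isometry does. Concretely, write the polar decomposition as $\nu = |\nu|(\,\cdot\, u)$, i.e.\ $\nu(y) = |\nu|(yu)$ for all $y\in M$, where $u$ is a partial isometry and $|\nu|$ is positive; recalling that the decomposition preserves the norm and that a positive normal functional attains its norm at $1$, we have $|\nu|(1) = \||\nu|\| = \|\nu\| = 1$, so $|\nu|$ is a state. I would then work entirely inside the associated sesquilinear form $\langle a,b\rangle = |\nu|(b^*a)$ on $M$ (linear in $a$, conjugate-linear in $b$), and throughout use the module convention $a.\varphi(y) = \varphi(ya)$ already employed in the paper, under which the targets read $x.|\nu|(y) = |\nu|(yx)$ and $x^*.\nu(y) = \nu(yx^*)$.

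The heart of the argument is the rigidity of Cauchy--Schwarz. The hypothesis translates into $\langle u,x\rangle = |\nu|(x^*u) = \nu(x^*) = 1$, while $\langle u,u\rangle = |\nu|(u^*u) \le \|u^*u\| = 1$ (as $u^*u$ is a projection) and $\langle x,x\rangle = |\nu|(x^*x) \le \|x\|^2 = 1$. From
\[
1 = |\langle u,x\rangle| \le \langle u,u\rangle^{1/2}\,\langle x,x\rangle^{1/2} \le 1
\]
I conclude that both factors equal $1$, i.e.\ $\langle x,x\rangle = \langle u,u\rangle = 1$; expanding $\langle x-u,\,x-u\rangle = \langle x,x\rangle - \langle x,u\rangle - \langle u,x\rangle + \langle u,u\rangle = 0$ then places $x-u$ in the left kernel $N = \{a\in M : |\nu|(a^*a)=0\}$ of $|\nu|$. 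Since $N$ is a left ideal, Cauchy--Schwarz gives $|\nu|\bigl(c(x-u)\bigr)=0$ for every $c\in M$, so $|\nu|(cx) = |\nu|(cu)$ for all $c$. Substituting this into $\nu(y) = |\nu|(yu)$ yields $\nu(y) = |\nu|(yx)$, which is exactly $\nu = x.|\nu|$.

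For the second identity I would reuse the byproduct $\langle x,x\rangle = |\nu|(x^*x) = 1 = |\nu|(1)$. Since $0\le x^*x\le 1$, this says $|\nu|(1-x^*x) = 0$ with $1-x^*x\ge 0$, so $(1-x^*x)^{1/2}\in N$; the left-ideal property then gives $|\nu|\bigl(y(1-x^*x)\bigr)=0$, that is, $|\nu|(yx^*x) = |\nu|(y)$ for all $y$. Combining this with the display from the previous step (taken at $c = yx^*$) produces
\[
\nu(yx^*) = |\nu|(yx^*u) = |\nu|(yx^*x) = |\nu|(y),
\]
which is the assertion $|\nu| = x^*.\nu$.

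The main obstacle is conceptual rather than computational: recognising that the single scalar hypothesis $\nu(x^*)=1$ is precisely a saturation of Cauchy--Schwarz, so that the equality case forces $x\equiv u$ modulo the null space of $|\nu|$. Once this is isolated, both identities drop out from the left-ideal structure of $N$. The only points demanding care are bookkeeping ones---keeping the two one-sided module actions consistent with the paper's convention, and correctly reading off $\langle u,x\rangle = \nu(x^*)$ from the chosen form of the polar decomposition---none of which presents a genuine difficulty.
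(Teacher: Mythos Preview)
Your proof is correct. Both arguments hinge on the equality case of Cauchy--Schwarz for $|\nu|$, but the route differs. The paper invokes Takesaki's Theorem~III.4.2 to identify the partial isometry in the polar decomposition of $\nu$ with the partial isometry $u$ in $x=u|x|$, and then shows $|x|$ lies in the multiplicative domain of $|\nu|$; the identities follow from multiplicativity. You instead take the polar decomposition $\nu=u.|\nu|$ abstractly, without reference to $x$, and show directly via Cauchy--Schwarz rigidity that $x-u$ lies in the left kernel $N_{|\nu|}$, so that $x$ and $u$ act identically under $|\nu|$. Your approach is somewhat more self-contained: it needs neither the citation to Takesaki nor the polar decomposition of $x$, and it isolates the mechanism (null-space coincidence) more transparently. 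The paper's route, on the other hand, packages the same rigidity via the multiplicative domain, which is the formulation reused later in the paper (e.g.\ in Lemma~\ref{lemma:group-like}).
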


\begin{proof}
  By the proof of Theorem III.4.2 of \cite{takesaki:vol1}, $\nu = u.|\nu|$
  where $x = u|x|$ is the polar decomposition of $x$.
  Note that $|\nu|(|x|) = \nu(|x|u^*) = 1 $.
  By the Cauchy--Schwarz inequality
  \[
  |\nu|(|x|)^2 \le |\nu|(|x|^2)|\nu|(1) \le 1 = |\nu|(|x|)^2. 
  \]
  Hence $|x|$ is in the multiplicative domain of $|\nu|$, and so
  for every $y\in M$
  \[
  x.|\nu|(y) = |\nu|(y u |x|) = |\nu|(yu)|\nu|(|x|) = \nu(y). 
  \]
  As $|\nu| = u^*.\nu$, we also have
  \[
  x^*.\nu(y) = \nu(y |x| u^*) = |\nu|(y|x|) = |\nu|(y). 
  \]
\end{proof}

The above general lemma allows us to say something about fixed points of contractive convolution operators.

\begin{lem} \label{lemma:group-like}
Suppose that $\omega\in M^u(\G)$ is contractive and
$v\in C_0^u(\G)^{**}$ is such that $v^*$ is a fixed point of
$\wt{L_\omega^u}$, $\|v\| = 1$  and $\omega(v^*) = 1$. 
Then 
\[
\wt\COP^{(k)}(v) - v^{\ot(k+1)} \in N_{|\omega|^{\ot(k+1)}} \qquad \text{and} \qquad
\omega^{\conv k} = v.|\omega|^{\conv k}
\]
for every $k \in \bn$. Moreover,
$|\omega|^{\conv k}(av^* v) = |\omega|^{\conv k}(v^* v a)= |\omega|^{\conv k}(a)$
for every $a\in C_0^u(\G)^{**}$ and $k \in \bn$.
\end{lem}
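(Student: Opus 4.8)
The plan is to run everything off Lemma~\ref{lem:abs}, applied in the von Neumann algebra $M := C_0^u(\G)^{**}$ (on which $|\omega|^{\conv m}$ and the tensor states $|\omega|^{\ot(k+1)}$ are normal states, and the iterated $\wt\COP^{(k)}$ is a normal unital $*$-homomorphism into the $(k+1)$-fold tensor power), together with a single ``staircase'' induction that establishes the three assertions in an interlocking order. First I would record the base facts. Since $1 = \omega(v^*)\le\|\omega\|\,\|v\|\le 1$, we have $\|\omega\|=1$, so Lemma~\ref{lem:abs} (with $\nu=\omega$, $x=v$) gives $\omega = v.|\omega|$ and $|\omega| = v^*.\omega$, where $(a.\mu)(y)=\mu(ya)$; its proof also yields $|\omega|(v^*v) = |\omega|(|v|^2) = 1$. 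Two elementary observations feed the induction: (a) $\omega^{\conv m}(v^*)=1$ for all $m\in\bn$, by induction from $\omega^{\conv(m+1)}(v^*) = (\omega\ot\omega^{\conv m})\wt\COP(v^*) = \omega^{\conv m}\bigl((\omega\ot\id)\wt\COP(v^*)\bigr) = \omega^{\conv m}(\wt{L_\omega^u}(v^*)) = \omega^{\conv m}(v^*)$, using that $v^*$ is fixed by $\wt{L_\omega^u}$; and (b) each $|\omega|^{\conv m}$ is a normal state, being a convolution power of the state $|\omega|$ with $|\omega|^{\conv m}(1)=1$.

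The induction proper is on $k\ge 1$ and carries the pair $\bigl[\,|\omega|^{\conv k}(v^*v)=1\,\bigr]$ and $\bigl[\,\omega^{\conv k}=v.|\omega|^{\conv k}\,\bigr]$, both of which hold at $k=1$ by the base facts. In the step $k\to k+1$ I would proceed in three moves. \emph{(i) Mass.} Writing $T=\wt\COP(v)$ and $\Theta = |\omega|\ot|\omega|^{\conv k}$, the inductive identities $\omega=v.|\omega|$ and $\omega^{\conv k}=v.|\omega|^{\conv k}$ give $\omega\ot\omega^{\conv k} = (v\ot v).\Theta$, whence $1 = \omega^{\conv(k+1)}(v^*) = (\omega\ot\omega^{\conv k})(T^*) = \Theta\bigl(T^*(v\ot v)\bigr)$; Cauchy--Schwarz then forces $1 = |\Theta(T^*(v\ot v))|^2 \le \Theta(T^*T)\,\Theta((v\ot v)^*(v\ot v)) = |\omega|^{\conv(k+1)}(v^*v)\cdot 1$, and since the right-hand factor is $\le 1$ we get $|\omega|^{\conv(k+1)}(v^*v)=1$. \emph{(ii) First assertion at level $k$.} With $A=\wt\COP^{(k)}(v)$, $B=v^{\ot(k+1)}$, $\Psi=|\omega|^{\ot(k+1)}$, the relation $\omega^{\ot(k+1)} = B.\Psi$ gives $\Psi(A^*B) = \omega^{\ot(k+1)}(\wt\COP^{(k)}(v^*)) = \omega^{\conv(k+1)}(v^*) = 1$, hence $\Psi(B^*A)=\overline{\Psi(A^*B)}=1$; since also $\Psi(B^*B)=|\omega|(v^*v)^{k+1}=1$ and $\Psi(A^*A)=|\omega|^{\conv(k+1)}(v^*v)=1$ by move (i), expanding $\Psi\bigl((A-B)^*(A-B)\bigr)=1-1-1+1=0$ shows $\wt\COP^{(k)}(v)-v^{\ot(k+1)}\in N_{|\omega|^{\ot(k+1)}}$. \emph{(iii) Second assertion at level $k+1$.} Since $N_{|\omega|^{\ot(k+1)}}$ is a left ideal and $\wt\COP^{(k)}$ is multiplicative, move (ii) lets me replace $v^{\ot(k+1)}$ by $\wt\COP^{(k)}(v)$ under $\Psi$, so for $y\in M$, $\omega^{\conv(k+1)}(y) = \Psi(\wt\COP^{(k)}(y)\,v^{\ot(k+1)}) = \Psi(\wt\COP^{(k)}(yv)) = |\omega|^{\conv(k+1)}(yv)$, i.e. $\omega^{\conv(k+1)}=v.|\omega|^{\conv(k+1)}$.

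Finally, the last assertion follows from the mass statement by a support-projection argument: letting $s_k$ be the support of the normal state $|\omega|^{\conv k}$, the equality $|\omega|^{\conv k}(v^*v)=1=|\omega|^{\conv k}(1)$ together with $0\le v^*v\le 1$ forces $(1-v^*v)^{1/2}s_k=0$, hence $v^*v\,s_k = s_k = s_k\,v^*v$; then $|\omega|^{\conv k}(av^*v)=|\omega|^{\conv k}(s_k a v^*v s_k)=|\omega|^{\conv k}(s_k a s_k)=|\omega|^{\conv k}(a)$, and symmetrically on the other side.

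The main obstacle is the apparent circularity among the three assertions --- in particular, the first assertion at level $k$ needs the mass statement at level $k+1$, which in turn needs the second assertion at level $k$. The staircase ordering $\text{mass}_{k+1}\to(\text{first assertion})_k\to(\text{second assertion})_{k+1}$ is exactly what breaks it. The one genuinely substantial input is the Cauchy--Schwarz step of move (i), where the decomposition $\omega=v.|\omega|$ is indispensable; it is also here that one must be careful to track the convolution powers $|\omega|^{\conv k}$ rather than the (a priori different) absolute values $|\omega^{\conv k}|$.
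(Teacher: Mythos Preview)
Your proof is correct, but the staircase induction is more elaborate than necessary; the paper dispatches everything for a fixed $k$ without induction. The key observation you are not fully exploiting is that $\omega^{\ot(k+1)} = v^{\ot(k+1)}.|\omega|^{\ot(k+1)}$ follows \emph{directly} from the single identity $\omega = v.|\omega|$ (just tensor it with itself), so there is no need to carry $\omega^{\conv k}=v.|\omega|^{\conv k}$ as an inductive hypothesis in order to compute $\Psi(A^*B)=1$. The paper then simply expands
\[
0 \le \Psi\bigl((A-B)^*(A-B)\bigr) = \Psi(A^*A) - 1 - 1 + 1 = |\omega|^{\conv(k+1)}(v^*v) - 1 \le 0,
\]
where the last inequality is trivially $\|v^*v\|\le 1$; this single line gives both the first assertion \emph{and} the mass statement $|\omega|^{\conv(k+1)}(v^*v)=1$ simultaneously, eliminating your separate Cauchy--Schwarz move (i). The second assertion then follows exactly as in your move (iii). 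So your moves (i)--(ii) are a correct but unpacked version of the paper's positivity squeeze, and the ``apparent circularity'' you worry about is an artifact of having introduced the induction in the first place.

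For the final assertion, the paper invokes Choi's multiplicative-domain criterion (from $|\omega|^{\conv k}(v^*v)^2 \le |\omega|^{\conv k}((v^*v)^2)\le 1$ with equality) rather than your support-projection argument; both are standard and equivalent here. Your argument has the minor advantage of not appealing to an external theorem, while the paper's version makes the multiplicative-domain structure explicit, which is reused elsewhere (e.g.\ in Theorem~\ref{thm:group-like}).
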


\begin{proof}
As $\|v\| = 1$ and $\omega(v^*)=1$, it follows from
Lemma~\ref{lem:abs} that $\omega = v.|\omega|$.  Moreover,
$|\omega|(v^*v) = 1$. 

Fix $k \in \bn$. Since $v^*$ is a fixed point of $\wt{\lt_\omega^u}$, we have
\[
1 = \omega(v^*) = \omega^{\ot (k + 1)}\bigl(\wt\COP^{(k)}(v^*)\bigr)
= |\omega|^{\ot( k + 1)}\bigl(\wt\COP^{(k)}(v^*)v^{\ot (k+1)}\bigr).
\]
Therefore
\begin{align*}
&|\omega|^{\ot(k +1)}\bigl((\wt\COP^{(k)}(v)-v^{\ot (k + 1)})^*
  (\wt\COP^{(k)}(v)-v^{\ot( k + 1)})\bigr)\\
&\qquad= |\omega|^{\ot( k +1)}\bigl(\wt\COP^{(k)}(v^* v) - \wt\COP^{(k)}(v^*)v^{\ot(k + 1)}
  -  (v^*)^{\ot( k + 1)}\wt\COP^{(k)}(v) + (v^* v)^{\ot( k + 1)}) \bigr) \\
&\qquad= |\omega|^{\ot (k +1)}\bigl(\wt\COP^{(k)}(v^* v)\bigr) - 1 -1 + 1 = 0.
\end{align*}
So we have
\[
\wt\COP^{(k)}(v) - v^{\ot (k+1)} \in N_{|\omega|^{\ot (k+1)}},
\]
and it is then immediate that $\omega^{\conv k} = v.|\omega|^{\conv k}$.

As a by-product, we get that $|\omega|^{\conv k}(v^*v) = 1$, and so
by the Cauchy--Schwarz inequality
\[
1 = |\omega|^{\conv k}(v^*v)^2 \le |\omega|^{\conv k}(v^*vv^*v)
\le \|v^*v v^* v\| \le 1.
\]
By Choi's theorem on multiplicative domains,
$v^* v$ is in the multiplicative domain of $|\omega|^{\conv k}$,
and so $|\omega|^{\conv k}(av^* v) = |\omega|^{\conv k}(v^* v a)
= |\omega|^{\conv k}(a)$.
\end{proof}

In the case when $\omega$ is a state, the next lemma trivialises (one can take $v=1$).

\begin{lem} \label{lemma:v-group-like}
Let $\omega\in M^u(\G)_1$.
If $\lt_\omega$ has a non-zero fixed point in $\linf(\QG)$,
then there exists $v\in C_0^u(\G)^{**}$ such that
\[
\wt\COP^{(k)}(v) - v^{\ot k+1} \in N_{|\omega|^{\ot k+1}} \qquad \text{and} \qquad
\omega^{\conv k} = v.|\omega|^{\conv k}
\]
for every $k \in \bn$.
Moreover, $|\omega|^{\conv k}(av^* v) = |\omega|^{\conv k}(v^* v a) = |\omega|^{\conv k}(a)$
for every $a\in C_0^u(\G)$ and $k \in \bn$.
\end{lem}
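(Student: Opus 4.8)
The plan is to reduce everything to Lemma~\ref{lemma:group-like}: it suffices to produce a single element $v\in C_0^u(\G)^{**}$ with $\|v\|=1$, $\omega(v^*)=1$, and with $v^*$ a fixed point of $\wt{L_\omega^u}$. Indeed, Lemma~\ref{lemma:group-like} then delivers both displayed relations for every $k\in\bn$, and the final assertion follows by restricting its multiplicative-domain statement from $C_0^u(\G)^{**}$ to $C_0^u(\G)$. Since $\lt_\omega$ has a non-zero fixed point we have $\|\omega\|=1$, and by Theorem~\ref{lemma:cesaro-nilpotent} the averaged limit $\wt{\omega}_p$ is non-zero; by Lemma~\ref{statelimits} it is a contractive idempotent, so $\|\wt{\omega}_p\|=1$. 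Throughout I write $N=C_0^u(\G)^{**}$ and use that both $\omega$ and $\wt{L_\omega^u}$ are normal on $N$.

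First I would produce a contractive projection onto the fixed-point space. Set $F=\Fix\wt{L_\omega^u}\subseteq N$; since $\wt{L_\omega^u}$ is normal, $F$ is weak$^*$-closed, so its unit ball is weak$^*$-compact. Define $P_n=\tfrac1n\sum_{k=1}^n(\wt{L_\omega^u})^k=\tfrac1n\sum_{k=1}^n\wt{L_{\omega^{\conv k}}^u}$ using Proposition~\ref{prop:collection}(vi), and let $P(a)$ be the weak$^*$ limit of $(P_n(a))_n$ along $p$, which exists by weak$^*$-compactness of bounded sets in $N$ and defines a contraction $P\col N\to N$. The absorption computation behind \eqref{eq:absorb}, together with the normality of $\wt{L_\omega^u}$, shows $\wt{L_\omega^u}(P(a))=P(a)$ for all $a$, so $P$ is a contractive projection of $N$ onto $F$.

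Next comes the quantitative heart: $\omega$ should attain norm one on the unit ball of $F$. On one hand, since $P$ is a contraction onto $F$ and fixes $F$ pointwise, $\sup\set{|\omega(w)|}{w\in F,\ \|w\|\le1}=\|\omega\circ P\|$. On the other hand, for $a\in\ruc^u(\G)\subseteq N$ the same absorption identity gives $\omega(P(a))=\wt{\omega}_p(a)$, so $\omega\circ P$ restricts to $\wt{\omega}_p$ on $\ruc^u(\G)$ and therefore $\|\omega\circ P\|\ge\|\wt{\omega}_p\|=1$; as $\|\omega\circ P\|\le1$ this is an equality. Now the unit ball of $F$ is weak$^*$-compact and $\omega$ is weak$^*$-continuous on $N$, so the supremum is attained: there is $t\in F$ with $\|t\|\le1$ and $|\omega(t)|=1$. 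Rotating $t$ by a unimodular scalar (which keeps it in the linear space $F$) I may assume $\omega(t)=1$, whence $\|t\|=1$. Setting $v=t^*$ gives $v^*=t\in F$ fixed by $\wt{L_\omega^u}$, $\|v\|=1$ and $\omega(v^*)=1$, and Lemma~\ref{lemma:group-like} finishes the argument.

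The hard part will be exactly this extraction of $v$. A priori the norm of the limiting functional $\omega\circ P$ is only guaranteed to be attained on the bidual of $N$, so one must ensure the norming element genuinely lives in $N=C_0^u(\G)^{**}$ and inside the fixed-point space, rather than escaping to a singular functional at infinity. This is what forces the two structural inputs to be used in tandem: normality of $\omega$ (so it is weak$^*$-continuous and the maximiser survives the limit) and normality of $\wt{L_\omega^u}$ (so that $F$ is weak$^*$-closed and the maximiser stays a fixed point); the fixed-point hypothesis itself enters only through $\wt{\omega}_p\neq0$, which is precisely what pins the supremum at $1$ rather than at something strictly smaller. In writing this up one should verify carefully that \eqref{eq:absorb} is being applied inside $N$ (not merely in $\ruc^u(\G)^*$) and that $\omega$ and $\omega\circ P$ really do agree on $F$, both of which are immediate once $P$ is known to fix $F$ pointwise.
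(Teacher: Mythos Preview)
Your argument is correct and reaches the same reduction to Lemma~\ref{lemma:group-like}, but the route to producing $v$ is genuinely different from the paper's. The paper works directly from the given fixed point $x\in\linf(\G)$: it chooses norming functionals $\phi_n\in\ball(\lone(\G))$ with $\phi_n(x)\to\|x\|$, sets $v_n=(\id\ot\phi_n^*)\Cop(x^*)/\phi_n^*(x^*)\in\ruc^u(\G)$, checks $\omega(v_n^*)=1$ and $L_\omega^u(v_n^*)=v_n^*$ by hand, and takes $v$ to be any weak$^*$ cluster point in $C_0^u(\G)^{**}$. Your approach is more abstract: you build the Ces\`aro projection $P$ on $N$, identify $\omega\circ P|_{\ruc^u(\G)}$ with $\wt\omega_p$ to force $\|\omega\circ P\|=1$, and then extract $v$ by weak$^*$-compactness of the unit ball of $\Fix\wt{L_\omega^u}$ together with normality of $\omega$. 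The paper's construction is shorter and more concrete (it never needs to verify that $P$ is a projection or to invoke $\wt\omega_p$), while yours makes the role of the idempotent limit $\wt\omega_p$ explicit and would adapt cleanly to any situation where one only knows $\wt\omega_p\neq0$ rather than possessing a specific fixed point. Two small points: the reference to Proposition~\ref{prop:collection}(vi) is slightly off---you really want (iv) for $L_\omega^u$ on $\ruc^u(\G)$, extended by normality to $N$---and the computation of $\omega(P(a))$ involves an index shift by one (you get $\tfrac1n\sum_{k=2}^{n+1}\omega^{\conv k}(a)$, not $S_n(\omega)(a)$ on the nose), though the difference vanishes in the limit as you indicate.
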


\begin{proof}
Let $x\in\linf(\G)$ be a non-zero fixed point of $\lt_\omega$.
Pick a sequence $(\phi_n)_{n \in \bn}\sub \ball(\lone(\G))$ such that
$\phi_n(x) > \|x\| - 1/n$ for each $n \in \bn$.
Define
\[
v_n = \frac{(\id\ot \phi_n^*)\Cop(x^*)}{\phi_n^*(x^*)}\in \ruc^u(\G)
\]
and let $v$ be a weak* cluster point of $(v_n)$ in $C_0^u(\G)^{**}$.
Since $L_\omega(x) = x$, it follows that $\omega(v_n^*) = 1$
for every $n\in \bn$ and so $\omega(v^*) = 1$.
Moreover, since $\phi_n(x) \to \|x\|$, it follows
that $\|v\|\le 1$, and so $\|v\|=1$.
Finally, we note that $v^*$ is a fixed point of $\wt{L_\omega^u}$: indeed,
it is easy to calculate that $L_\omega^u(v_n^*) = v_n^*$ and
the rest follows as $\wt{L_\omega^u}$ is normal when $\omega\in M^u(\G)$.
Now we may apply Lemma~\ref{lemma:group-like} to obtain the result.
\end{proof}

In the case when $L_\omega^u$ has a fixed point in $\luc^u(\G)$
we obtain a tighter result. Note that by
Lemma~\ref{lemma:cesaro-nilpotent} a fixed point in $\linf(\G)$
implies a fixed point in $\ruc^u(\G)$ but not neccessarily in $\luc^u(\G)$.

We call a contractive functional $\omega\in \M^u(\G)$
\emph{non-degenerate} if both $|\omega|$ and $|\omega|_\ell$ are
non-degenerate in the sense of
\cite{knr}, i.e.\ for each non-zero $x\in C_0^u(\QG)_+$ there exists
$n \in \N$ such that $|\omega|^{\conv n}(x) >0$ and
$|\omega|_\ell^{\conv n}(x)>0$. We use the analogous 
notion of degeneracy for $\omega\in \MG$.

\begin{theorem} \label{thm:group-like}
  Suppose that $\omega\in M^u(\G)$ is non-degenerate and 
  contractive. If $L_\omega^u$ has a non-zero fixed point in $\luc^u(\G)$,
  then there is a unitary $v\in \luc^u(\G)$ (even $\uc^u(\G)$)
  such that
  \[
  \COP(v) = v\ot v,
  \]
  and $\Fix L_\omega^u = (\Fix L_{|\omega|}^u) v^*$.
  Taking $u = \Lambda(v)$ we have
  $\Fix L_\omega = (\Fix L_{|\omega|}) u^*$.
  \end{theorem}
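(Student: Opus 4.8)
The plan is to build on Lemma~\ref{lemma:v-group-like}: the non-zero fixed point in $\luc^u(\G)$ first produces an element $v\in C_0^u(\G)^{**}$ with $\|v\|=1$, $\omega(v^*)=1$, $v^*$ a fixed point of $\wt{L_\omega^u}$, the convolution identities $\omega^{\conv k}=v.|\omega|^{\conv k}$ and $\wt\COP^{(k)}(v)-v^{\ot(k+1)}\in N_{|\omega|^{\ot(k+1)}}$, together with the multiplicative-domain relations for $v^*v$ relative to each $|\omega|^{\conv k}$. Since $\Fix L_\omega\neq\{0\}$ forces $\|\omega\|=1$, the positive functional $|\omega|$ is a state and all $|\omega|^{\conv k}$ are states. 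The goal is to upgrade these ``modulo null space'' facts to the exact operator identities $v^*v=vv^*=1$ and $\COP(v)=v\ot v$, and then to locate $v$ in $\uc^u(\G)$. Every upgrade argument below presupposes that $v$ is a genuine multiplier, $v\in M(C_0^u(\G))$; securing this is where the $\luc^u$-hypothesis (rather than mere membership in $\linf(\G)$) is indispensable, and I expect it to be the technical heart of the proof.

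Granting $v\in M(C_0^u(\G))$, I would argue as follows. For unitarity set $p=1-v^*v$. As $v^*v$ lies in the multiplicative domain of each state $|\omega|^{\conv k}$ with $|\omega|^{\conv k}(v^*v)=1$, so does $p$, and $|\omega|^{\conv k}(pap)=|\omega|^{\conv k}(p)\,|\omega|^{\conv k}(ap)=0$ for every $a$. For $a\in C_0^u(\G)_+$ one has $pap\in C_0^u(\G)_+$, so non-degeneracy of $|\omega|$ gives $pap=0$; running a positive approximate identity strictly to $1$ yields $p^2=0$, i.e.\ $v^*v=1$. The symmetric argument, using the left absolute value $|\omega|_\ell$ and its non-degeneracy, gives $vv^*=1$ — this is exactly why the hypothesis demands non-degeneracy on both sides. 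For comultiplicativity, write $d=\COP(v)-v\ot v$ and $\mu_{m,n}=|\omega|^{\conv m}\ot|\omega|^{\conv n}$. Expanding $\omega^{\conv(m+n)}=v.|\omega|^{\conv(m+n)}=\omega^{\conv m}\conv\omega^{\conv n}$ in the two available ways gives $\mu_{m,n}(\COP(a)d)=0$ for all $a,m,n$, and, taking adjoints, $\mu_{m,n}(d^*\COP(a))=0$. Specialising the latter to $a=v$ and using $v^*v=1$ produces $\mu_{m,n}(w)=1$ for the unitary $w=(v^*\ot v^*)\COP(v)$; by Choi's criterion $w$ lies in the multiplicative domain of every $\mu_{m,n}$. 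With $q=1-w$ this yields $\mu_{m,n}(q^*bq)=0$ for all $b$. Taking $b\in(C_0^u(\G)\ot C_0^u(\G))_+$ gives $q^*bq\in(C_0^u(\G)\ot C_0^u(\G))_+$, and slicing one leg by a state reduces the vanishing of $\mu_{m,n}(q^*bq)$ over all $m,n$ to single-variable non-degeneracy of $|\omega|$, forcing $q^*bq=0$; an approximate identity then gives $q=0$, i.e.\ $\COP(v)=v\ot v$.

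It remains to justify the standing assumption $v\in M(C_0^u(\G))$ and to refine it to $\uc^u(\G)$. The natural construction (parallel to Lemma~\ref{lemma:v-group-like}, but using the left comultiplication $\coP$, which is available precisely because the fixed point lies in $\luc^u(\G)$) exhibits $v$ as a weak$^*$ cluster point of a norm-one net in $\luc^u(\G)\subseteq M(C_0^u(\G))$. Since the unit ball of $M(C_0^u(\G))$ is not weak$^*$-closed in $C_0^u(\G)^{**}$, membership of the cluster point in $M(C_0^u(\G))$ is the delicate point and must be extracted from the $\luc^u$-structure; this is the main obstacle. Once it is in hand and the preceding paragraph applies, the group-like relation refines the membership automatically: by \eqref{eq:reducing} applied to $v$ one gets $\Cop(u)=(\id\ot\wt\Lambda)\COP(v)=v\ot u$ with $u=\Lambda(v)\in M(C_0(\G))$ unitary, so choosing $\phi\in\lone(\G)$ with $\phi(u)=1$ gives $v=(\id\ot\phi)\Cop(u)\in\ruc^u(\G)$; the symmetric identity for $\coP$ places $v\in\luc^u(\G)$, whence $v\in\uc^u(\G)$.

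Finally, the fixed-point descriptions follow formally. From $\omega=v.|\omega|$, unitarity of $v$, and $\COP(v)=v\ot v$, a direct computation gives the operator identity $L_\omega^u(yv^*)=L_{|\omega|}^u(y)\,v^*$ for all $y\in\ruc^u(\G)$; cancelling the unitary $v^*$ shows $yv^*\in\Fix L_\omega^u\iff y\in\Fix L_{|\omega|}^u$, that is, $\Fix L_\omega^u=(\Fix L_{|\omega|}^u)v^*$. Applying the reducing morphism and Proposition~\ref{prop:collection}(v) turns this into $L_\omega(\Lambda(y)u^*)=L_{|\omega|}(\Lambda(y))u^*$ on $\ruc(\G)=\Lambda(\ruc^u(\G))$; since $L_\omega$ and $L_{|\omega|}$ are normal by Proposition~\ref{prop:collection}(vii) and $\ruc(\G)$ is weak$^*$-dense in $\linf(\G)$, the identity $L_\omega(zu^*)=L_{|\omega|}(z)u^*$ extends to all $z\in\linf(\G)$, and cancelling $u^*$ yields $\Fix L_\omega=(\Fix L_{|\omega|})u^*$ with $u=\Lambda(v)$, as asserted.
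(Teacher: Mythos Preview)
Your proposal correctly identifies the obstacle and then fails to clear it: you write that ``membership of the cluster point in $M(C_0^u(\G))$ is the delicate point and must be extracted from the $\luc^u$-structure; this is the main obstacle,'' but you never actually extract it. Without $v\in M(C_0^u(\G))$ your entire downstream argument (the non-degeneracy upgrades, $\COP(v)=v\ot v$, the fixed-point identities) collapses, since all of it works in $M(C_0^u(\G))$ and not in $C_0^u(\G)^{**}$.

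The paper avoids the obstacle altogether rather than confronting it. Instead of imitating Lemma~\ref{lemma:v-group-like} with $\phi_n\in\lone(\G)$ and passing to a weak$^*$ cluster point, one uses the Hahn--Banach theorem on the operator system $\luc^u(\G)$: pick $\mu\in\luc^u(\G)^*$ with $\|\mu\|=1$ and $\mu(x)=1$ \emph{exactly}, and set $v=R_\mu^u(x)^*$ (with $R_\mu^u$ defined on $\luc^u(\G)$ analogously to $L_\rho^u$ on $\ruc^u(\G)$). Then $v\in\luc^u(\G)\subset M(C_0^u(\G))$ by construction, no limit is needed, and one has immediately $\omega(v^*)=\mu(L_\omega^u(x))=\mu(x)=1$, $\|v\|=1$, and $L_\omega^u(v^*)=v^*$ since $L_\omega^u$ commutes with $R_\mu^u$. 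This is precisely where the $\luc^u$-hypothesis earns its keep: it lets one slice on the \emph{right} with a functional attaining the norm, landing back in $\luc^u(\G)$.

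Once $v\in M(C_0^u(\G))$ is secured, your remaining arguments are essentially the paper's, with cosmetic differences (the paper expands $(\COP(v)-v\ot v)^*(\COP(v)-v\ot v)$ and applies non-degeneracy leg-by-leg; you route through $w=(v^*\ot v^*)\COP(v)$; both work). Your $v^*v=1$ and $vv^*=1$ arguments via $p=1-v^*v$, multiplicative domains, and the two-sided non-degeneracy hypothesis are fine, as is the derivation of the fixed-point identities.
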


\begin{proof}
  Let $x\in\luc^u(\G)$ be a fixed point of $L_\omega^u$ such that $\|x\| = 1$. 
  Fix $\mu\in\luc^u(\G)^*$ such that $\|\mu\|= 1$ and $\mu(x) = 1$.
  Define $v = R_\mu^u(x)^*\in \luc^u(\G)$ (where $R_\mu^u$ is defined
  analogously to $L_\mu^u$, introduced in Proposition~\ref{prop:collection}). 
  Then
  \[
  \omega(v^*) = \omega(R_\mu^u(x)) = \mu(L_\omega^u(x)) = \mu(x) = 1,
  \]
  and in particular $\|v\|=1$. Moreover, $v^*$ is a fixed
  point of $L_\omega^u$ as $L_\omega^u$ commutes with $R_\mu^u$.
  Now we may apply Lemma~\ref{lemma:group-like}.
  
  We claim that $\COP(v) = v\ot v$.
  For every $k,l\in \N$ we have
  \begin{align*}
    &|\omega|^{\conv k}\ot |\omega|^{\conv l}
         \bigl((\COP(v)-v\ot v)^*(\COP(v)-v\ot v)\bigr)\\
    &\qquad= |\omega|^{\conv k}\ot |\omega|^{\conv l}
         \bigl((\COP(v^*v)-\COP(v^*)(v\ot v)
         -(v^*\ot v^*)\COP(v) + v^*v\ot v^* v\bigr)\\
    &\qquad=|\omega|^{\conv(k+l)}(v^*v) - \omega^{\conv(k+l)}(v^*) -
         \conj{\omega^{\conv(k+l)}(v^*)} +
      |\omega|^{\conv k}(v^*v) |\omega|^{\conv l}(v^*v)  = 0.
  \end{align*}
  Now   $X = (\COP(v)-v\ot v)^*(\COP(v)-v\ot v)\in M(C_0^u(\G)\ot C_0^u(\G))$
  is such that $X\ge 0$ and  $|\omega|^{\conv k}\ot |\omega|^{\conv l}(X) = 0$.
  Since for every $l\in \N$ 
  \[
  |\omega|^{\conv l}((|\omega|^{\conv k}\ot \id)(X)) = 0
  \]
  it follows by non-degeneracy that $(|\omega|^{\conv k}\ot \id)(X) = 0$.
  Then, for every $\sigma\ge 0$ in $C_0^u(\G)^*$,
  we have $(\id\ot\sigma)(X)\ge 0$ and 
  $|\omega|^{\conv k}((\id\ot\sigma)(X)) = 0$ for every $k\in\N$.
  Therefore $(\id\ot\sigma)(X) = 0$ for every $\sigma\ge 0$
  and so $X = 0$. 
  Consequently $\COP(v) = v\ot v$ as required.

  Now for every $a\in C_0^u(\G)$ we have
  \[
  |\omega|^{\conv k}\bigl((av^*v - a)^*(av^*v - a)\bigr)
   = |\omega|^{\conv k}\bigl(v^*v a^*av^*v - v^*v a^*a - a^*av^*v +
       a^*a\bigr)
   = 0.
  \]
  Non-degeneracy implies that $av^* v = a$, and then
  it follows that $v^*v = 1$.
  Similarly $vv^* = 1$ follows
  by applying non-degeneracy of the left-hand sided absolute value
  $|\omega|_\ell$.
  
  If $y\in \Fix L_{|\omega|}^u$, then  
  \[
  L_\omega^u(yv^*) = (\omega\ot\id)\COP(yv^*) =
  (\omega\ot\id)(\COP(y)(v^*\ot v^*)) =
   (|\omega|\ot\id)(\COP(y)) v^* = yv^*.
  \]
  Hence $\Fix L_{|\omega|}^u v^* \subset \Fix L_{\omega}^u$.
  Conversely, if $z\in \Fix L_{\omega}^u$, then
  \[
  L_{|\omega|}^u(zv) = (|\omega|\ot\id)\COP(zv) =
  (|\omega|\ot\id)(\COP(z)(v\ot v)) 
  = (\omega\ot\id)(\COP(z)) v = zv.
  \]
  Therefore $\Fix L_{\omega}^u = (\Fix L_{|\omega|}^u) v^*$
  and $\Fix L_{|\omega|}^u = (\Fix L_{\omega}^u) v $.

  Finally, $u = \Lambda(v)$ is a group-like unitary
  in $M(C_0(\G))$ and $u^*$ is a fixed point of $L_\omega$.
  Similarly as above
  $\Fix L_{\omega} = (\Fix L_{|\omega|}) u^*$
  and $\Fix L_{|\omega|} = (\Fix L_{\omega}) u$.
  \end{proof}

We say that a locally compact quantum group $\G$ is
\emph{ universally SIN} if $\luc^u(\G) = \ruc^u(\G)$. Note that all discrete
and compact quantum groups are universally SIN as are duals of locally compact
groups. Recall that Hu, Neufang and Ruan called a locally compact
quantum group SIN (from `having small invariant neighbourhoods') if
$\luc(\G) = \ruc(\G)$. If $\G$ is universally SIN, it is SIN, and the
two notions trivially coincide for coamenable $\G$. We do not know if
they coincide in general.

A weak*-closed subspace of a von Neumann algebra is called a
\emph{$W^*$-sub-TRO} if it is closed under the ternary product
$(x, y, z)\mapsto xy^*z$. 

\begin{cor}
  Suppose that $\G$ is a locally compact quantum group that is universally SIN,
  and let $\omega \in \M^u(\G)_1$ be non-degenerate.
  If  $\Fix L_{\omega}$ is a $W^*$-sub-TRO of $\Linf(\G)$,
  then $\dim (\Fix L_{\omega}) \leq 1$ 
  \textup{(}moreover if $\Fix L_{\omega}$ is non-zero, it
  contains a unitary\textup{)}.
\end{cor}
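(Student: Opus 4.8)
The plan is to reduce, via Theorem~\ref{thm:group-like}, to the statement that the fixed point space of the positive part $|\omega|$ is one-dimensional. If $\Fix L_\omega=\{0\}$ there is nothing to prove, so assume $\Fix L_\omega\ne\{0\}$; then $\|\omega\|=1$, since $\|\omega\|<1$ would force $\Fix L_\omega=\{0\}$. By Theorem~\ref{lemma:cesaro-nilpotent} the operator $L_\omega^u$ has a non-zero fixed point in $\ruc^u(\G)$, and as $\G$ is universally SIN we have $\ruc^u(\G)=\luc^u(\G)$, so $L_\omega^u$ has a non-zero fixed point in $\luc^u(\G)$. Theorem~\ref{thm:group-like} now applies: there is a group-like unitary $v\in\luc^u(\G)$ with $\COP(v)=v\ot v$, and, writing $u=\Lambda(v)$, we get $\Fix L_\omega=(\Fix L_{|\omega|})u^*$. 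Since $|\omega|$ is positive of norm $\|\omega\|=1$ it is a state, so $L_{|\omega|}(1)=1$ and $1\in\Fix L_{|\omega|}$; consequently the unitary $u^*=1\cdot u^*$ lies in $\Fix L_\omega$, which already establishes the ``moreover'' clause.

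Next I would transfer the ternary structure across $u$. Because $u$ is unitary, the relation $\Fix L_\omega=(\Fix L_{|\omega|})u^*$ is equivalent to $\Fix L_{|\omega|}=(\Fix L_\omega)u$, and the computation $a'u\,(b'u)^*\,c'u=(a'b'^*c')u$ shows that whenever $\Fix L_\omega$ is a $W^*$-sub-TRO, so is $\Fix L_{|\omega|}$. As $L_{|\omega|}$ is normal (Proposition~\ref{prop:collection}(vii)), $\Fix L_{|\omega|}$ is weak*-closed, and since it contains $1$ it is in fact a von Neumann subalgebra of $\Linf(\G)$ under the usual product (a unital TRO is automatically a self-adjoint, multiplicatively closed subspace). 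In particular $\Fix L_{|\omega|}$ is closed under squaring.

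It then remains to show $\Fix L_{|\omega|}=\C1$, and here non-degeneracy enters through a martingale-type Cauchy--Schwarz argument paralleling the end of the proof of Theorem~\ref{thm:group-like}. Let $x\in\Fix L_{|\omega|}$ be self-adjoint; then $x^2\in\Fix L_{|\omega|}$, and by Proposition~\ref{prop:collection}(vi) both $x$ and $x^2$ are fixed by $L_{|\omega|^{\conv k}}=L_{|\omega|}^k$ for every $k$. Consider
\[
X = (\Cop(x)-1\ot x)^*(\Cop(x)-1\ot x)\ge 0.
\]
Since the slice $|\omega|^{\conv k}\ot\id$ acts only on the first leg, the factors $1\ot x$ pull out on the second leg, and using $L_{|\omega|^{\conv k}}(x^2)=x^2$ and $L_{|\omega|^{\conv k}}(x)=x$ one computes $(|\omega|^{\conv k}\ot\id)(X)=x^2-x^2-x^2+x^2=0$ for every $k\in\N$. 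Slicing the second leg by positive functionals and invoking non-degeneracy of $|\omega|$ exactly as in Theorem~\ref{thm:group-like} forces $X=0$, i.e.\ $\Cop(x)=1\ot x$; applying $\Lambda\ot\id$ yields $\cop(x)=1\ot x$. As the only left-invariant elements of $\Linf(\G)$ are the scalars, $x\in\C1$. Decomposing an arbitrary element of $\Fix L_{|\omega|}$ into self-adjoint parts gives $\Fix L_{|\omega|}=\C1$, whence $\Fix L_\omega=\C u^*$ is one-dimensional.

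The main obstacle is the third paragraph: making the positivity/non-degeneracy slicing rigorous when $X$ lives in $M(C_0^u(\G)\ot\K(\ltwo(\G)))$ rather than in $M(C_0^u(\G)\ot C_0^u(\G))$, so that one must check that the second-leg slices land in a space on which the non-degeneracy hypothesis for $|\omega|$ is available; this is a mild extension of the argument already used for Theorem~\ref{thm:group-like}. A secondary point to justify cleanly is the chain $\Cop(x)=1\ot x\Rightarrow\cop(x)=1\ot x\Rightarrow x\in\C1$, namely that invariance passes through the reducing morphism and that left-invariant elements of $\Linf(\G)$ are scalar (alternatively, one could run the computation at the universal level and close it with the counit of $C_0^u(\G)$).
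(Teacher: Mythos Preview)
Your first two paragraphs coincide with the paper's proof: handle the trivial case, use Theorem~\ref{lemma:cesaro-nilpotent} together with the universally SIN hypothesis to feed into Theorem~\ref{thm:group-like}, obtain the group-like unitary $u$ and the identity $\Fix L_\omega=(\Fix L_{|\omega|})u^*$, and deduce from the TRO hypothesis that $\Fix L_{|\omega|}$ is a von Neumann subalgebra of $\Linf(\G)$.

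The only divergence is in the last step. The paper simply invokes \cite[Theorem~3.6]{knr} to conclude $\Fix L_{|\omega|}=\C 1$ from non-degeneracy plus the von Neumann subalgebra property. You instead reprove this fact directly, via the Cauchy--Schwarz/non-degeneracy scheme already used in the proof of Theorem~\ref{thm:group-like}: take self-adjoint $x\in\Fix L_{|\omega|}$, use closure under squaring to get $x^2\in\Fix L_{|\omega|}$, and expand $(\Cop(x)-1\ot x)^*(\Cop(x)-1\ot x)$. Your computation $(|\omega|^{\conv k}\ot\id)(X)=0$ is correct, and the passage $\Cop(x)=1\ot x\Rightarrow\cop(x)=1\ot x\Rightarrow x\in\C1$ is exactly the standard argument (the paper itself uses \cite[Result~5.13]{KV} for the last implication elsewhere). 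So your route is self-contained where the paper outsources, at the cost of some extra work; what the citation buys is brevity, while your argument makes the paper internally closed on this point.

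On the obstacle you flag: the issue of applying non-degeneracy to elements of $M(C_0^u(\G))_+$ rather than $C_0^u(\G)_+$ is not specific to your argument---the paper's own proof of Theorem~\ref{thm:group-like} performs the identical move when concluding $(\id\ot\sigma)(X)=0$ from $|\omega|^{\conv k}((\id\ot\sigma)(X))=0$, with $(\id\ot\sigma)(X)\in M(C_0^u(\G))_+$. So this is not a new gap introduced by your approach; it is already implicit in the result you are relying on, and is handled (or tolerated) in the same way there.
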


\begin{proof}
The case when $\Fix L_{\omega} = \{0\}$ is clear so we may suppose
that $\Fix L_{\omega} \ne \{0\}$. We then first use Theorem \ref{lemma:cesaro-nilpotent} and the assumption that $\QG$ is universally SIN to deduce that the assumptions of  Theorem~\ref{thm:group-like} hold. Thus there is a group-like unitary
$u$ such that   $\Fix L_\omega = (\Fix L_{|\omega|}) u^*$.
If $\Fix L_\omega$ is a $W^*$-sub-TRO, then $\Fix L_{|\omega|}$ is a
von Neumann subalgebra. Hence $\Fix L_{|\omega|}\cong \complex$
by \cite[Theorem 3.6]{knr}.
\end{proof}

We do not know if the universally SIN assumption of the previous result is
actually necessary; our methods seem however to be limited to this case.


\section{Annihilators of $\Fix L_{\omega}$ in $L^1(\QG)$} 


In this short section we describe properties of the pre-annihilators of
fixed point spaces of the type $\Fix L_\omega\sub \linf(\G)$, i.e.\ certain
one-sided ideals in $L^1(\QG)$.

\begin{deft} \label{def:preann}
  Given $\omega \in \PUG$, define a closed right ideal $I_\omega$ in
  $L^1(\QG)$ by the formula 
  \[
  I_{\omega} := \textup{cl } \{f - \omega \star f: f\in L^1(\QG) \}.
  \]
Moreover, let $L^1_0(\QG)$ denote the augmentation ideal in
   $L^1(\QG)$, i.e.\ $L^1_0(\QG)= \{f\in L^1(\QG): f(1) = 0\}$. 
\end{deft}

\begin{lem} \label{preann}
	Let $\omega \in \PUG$. Then the right ideal $I_\omega$ is equal to
        the pre-annihilator of $\Fix L_\omega$. In particular
        $I_\omega \subset L^1_0(\QG)$. 
\end{lem}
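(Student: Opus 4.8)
The plan is to prove the two claims of Lemma~\ref{preann} in sequence, establishing first that $I_\omega$ coincides with the pre-annihilator ${}^\perp(\Fix L_\omega)$, and then deriving the inclusion $I_\omega \subset L^1_0(\QG)$ as a consequence.

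First I would unwind the definitions to see what the pre-annihilator actually is. An element $x \in \linf(\G)$ lies in $\Fix L_\omega$ precisely when $L_\omega(x) = x$, which by the defining relation $\phi(L_\omega(x)) = \omega((\id\ot\phi)\Cop(x)) = (\omega\conv\phi)(x)$ means $\phi(x) = (\omega\conv\phi)(x)$ for every $\phi\in\lone(\G)$, i.e.\ $(\phi - \omega\conv\phi)(x) = 0$. Thus $x\in\Fix L_\omega$ if and only if $x$ annihilates every element of the form $\phi - \omega\conv\phi$, that is, $\Fix L_\omega = \{\, \phi - \omega\star\phi : \phi\in\lone(\G)\,\}^\perp$. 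Taking pre-annihilators and using the standard duality fact that for a subset $S\subset\lone(\G)$ one has ${}^\perp(S^\perp) = \overline{\linsp S}^{\,\|\cdot\|}$ (the norm-closed linear span, here already a closed subspace by linearity of $\phi\mapsto\phi-\omega\star\phi$), I obtain ${}^\perp(\Fix L_\omega) = \overline{\{\phi-\omega\star\phi : \phi\in\lone(\G)\}}^{\,\|\cdot\|} = I_\omega$. This is the core of the argument and I expect it to be essentially a bookkeeping exercise once the adjoint relation is written out correctly.

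For the second claim, $I_\omega\subset L^1_0(\QG)$, I would argue that since $L^1_0(\QG) = \{f : f(1)=0\}$ is a norm-closed subspace (being the kernel of the bounded functional $f\mapsto f(1)$), it suffices to show each generator $f - \omega\star f$ lies in it, and then closure does the rest. Evaluating at the unit $1\in\linf(\G)$, I compute $(\omega\star f)(1) = \omega((\id\ot f)\Cop(1)) = \omega(1)\,f(1)$ using that $\Cop$ is unital so $(\id\ot f)\Cop(1) = f(1)\cdot 1$; since $\omega\in\PUG$ is a state, $\omega(1)=1$, giving $(f-\omega\star f)(1) = f(1) - f(1) = 0$. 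Hence every generator is in $L^1_0(\QG)$, and since $I_\omega$ is the closure of their span, $I_\omega\subset L^1_0(\QG)$.

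The main obstacle, such as it is, lies in justifying the duality identity ${}^\perp(S^\perp) = \overline{\linsp S}$ in the correct direction and being careful about which annihilator is taken in which space: here $\Fix L_\omega$ is weak*-closed in $\linf(\G)$ (as noted in the text for $\omega\in M^u(\G)_1$), so its pre-annihilator in $\lone(\G)$ is a genuine norm-closed subspace whose own annihilator recovers $\Fix L_\omega$, and the bipolar theorem applies cleanly. The only subtlety worth flagging is confirming that $\Fix L_\omega$ is exactly the annihilator of the generating set rather than merely containing it, which is precisely the ``if and only if'' in the first paragraph; once that equivalence is secured, both conclusions follow formally.
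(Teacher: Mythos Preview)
Your proof is correct and follows essentially the same approach as the paper: both establish the equivalence $x\in I_\omega^\perp \iff x\in\Fix L_\omega$ via the adjoint relation $f(L_\omega(x))=(\omega\star f)(x)$, then invoke the bipolar theorem. The paper dispatches the second inclusion with ``obvious'' (presumably since $1\in\Fix L_\omega$), whereas you verify directly that each generator vanishes at $1$; both are fine.
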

\begin{proof}
	To show the first statement it suffices to consider the following chain of equivalences:	
	\[ x \in I_{\omega}^{\perp} \Longleftrightarrow \forall_{f \in L^1(\QG)}\, f(x) - \omega \star f(x) =0  \Longleftrightarrow \forall_{f \in L^1(\QG)}\, f(x) = f(L_{\omega}(x)) \Longleftrightarrow x = L_{\omega}(x). \]
	The second statement is now obvious.
\end{proof}

The following result can be proved exactly as its classical counterpart,  \cite[Theorem 1.2]{Willis}. We outline the argument for completeness.

\begin{prop}
	Assume that $\QG$ is second countable (i.e.\ $C_0(\QG)$ is
        separable)  and let $\mathcal{I}=\{I_{\omega}: \omega \in \PUG\}$. Then each element in $\mathcal{I}$ is contained in a maximal element of $\mathcal{I}$.
\end{prop}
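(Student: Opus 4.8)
The plan is to apply Zorn's lemma to the partially ordered set $(\mathcal{I},\subseteq)$, the entire difficulty being to produce, for an arbitrary chain, an upper bound that again lies in $\mathcal{I}$. The role of second countability is to tame the chain: since $C_0(\QG)$ is separable, $\linf(\QG)$ acts on a separable Hilbert space, so its predual $\lone(\QG)$ is norm-separable. A strictly increasing chain of closed subspaces of a separable Banach space is necessarily countable (otherwise, by the Riesz lemma, one extracts an uncountable $\tfrac12$-separated family of unit vectors, contradicting separability). Hence, unless the chain already has a largest element, it admits a countable cofinal increasing subsequence $I_{\omega_1}\subseteq I_{\omega_2}\subseteq\cdots$, and it suffices to bound this subsequence.

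Given such a sequence I would set $\omega=\sum_{n}2^{-n}\omega_n$. The series converges in norm in $\MUG$, and as a norm-convergent convex combination of states it is again a state, so $\omega\in\PUG$. The candidate upper bound is $I_\omega$, and I would show $I_\omega=\normcl{\bigcup_n I_{\omega_n}}$. One inclusion is soft: writing $\epsilon$ for the unit (counit) of $\MUG$, one has $\epsilon-\omega=\sum_n 2^{-n}(\epsilon-\omega_n)$, hence $f-\omega\conv f=\sum_n 2^{-n}(f-\omega_n\conv f)$ for $f\in\lone(\QG)$; each partial sum lies in $I_{\omega_N}$ because the chain is increasing, and the tail is norm-small, giving $I_\omega\subseteq\normcl{\bigcup_n I_{\omega_n}}$.

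The substance is the reverse inclusion $I_{\omega_n}\subseteq I_\omega$ for every $n$. By Lemma~\ref{preann} the pre-annihilator of $I_\omega$ is $\Fix L_\omega$, so passing to annihilators this is equivalent to $\Fix L_\omega\subseteq\Fix L_{\omega_n}$ for all $n$; as the containment $\bigcap_n\Fix L_{\omega_n}\subseteq\Fix L_\omega$ is immediate, what is really needed is the identity $\Fix L_\omega=\bigcap_n\Fix L_{\omega_n}$. To prove $\subseteq$ I would first reduce to a positive fixed point $a$ (the fixed-point space is self-adjoint and contains $1=L_\omega 1$, hence is spanned by its positive part), and then exploit the positive unital projection $P=L_{\wt{\omega}_p}$ of Lemma~\ref{lem:fix_L}, which satisfies $P\circ L_\omega=P$ because $\omega\conv\wt{\omega}_p=\wt{\omega}_p$ (Lemma~\ref{statelimits}). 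Writing $a=\sum_n t_n L_{\omega_n}(a)$ with $t_n=2^{-n}$ and $b_n=L_{\omega_n}(a)$, the Kadison--Schwarz inequality for the unital completely positive maps $L_{\omega_n}$ together with the operator convexity of $t\mapsto t^2$ (equivalently the variance identity $\sum_{n,m} t_n t_m (b_n-b_m)^2\ge 0$) forces $a^2\le L_\omega(a^2)$; applying $P$ and using $P\circ L_\omega=P$ collapses this to $P\bigl((b_n-b_m)^2\bigr)=0$ for all $n,m$, which one upgrades, via Choi's theorem on multiplicative domains and the $L_\omega$-invariant states $\psi\circ P$ (which attain $\|a\|$ on $a$), to the operator equalities $L_{\omega_n}(a)=a$.

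The main obstacle is precisely this last upgrade: passing from the projected identities $P\bigl((b_n-b_m)^2\bigr)=0$ and $\psi\circ P(b_n)=\|a\|$ to the operator equalities $b_n=a$. This is the non-commutative incarnation of the classical maximum principle underlying \cite[Theorem~1.2]{Willis}, and it is here that the \emph{positivity} of $\omega$ (states, not merely contractions) is indispensable; everything else is a routine transcription of the group case.
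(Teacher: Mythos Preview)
Your overall plan (Zorn's lemma, reduction to a countable cofinal chain via separability, and the candidate $\omega=\sum_n 2^{-n}\omega_n$) is reasonable, and the inclusion $I_\omega\subseteq\normcl{\bigcup_n I_{\omega_n}}$ is indeed soft. The genuine problem is exactly where you locate it: the ``upgrade'' from $P\bigl((b_n-a)^2\bigr)=0$ to $b_n=a$.

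Your Kadison--Schwarz/convexity chain does give $a^2\le\sum_n t_n b_n^2\le L_\omega(a^2)$, and applying $P=L_{\wt\omega_p}$ together with $P\circ L_\omega=P$ collapses this to $P\bigl(\tfrac12\sum_{n,m}t_n t_m(b_n-b_m)^2\bigr)=0$, hence $P\bigl((b_n-a)^2\bigr)=0$ for each $n$. But this is a statement about the image of $(b_n-a)^2$ under the projection $P$ onto $\Fix L_\omega$, and $P$ is in general highly non-faithful: its kernel is all of $(1-P)\linf(\G)$. The states $\psi\circ P$ you invoke separate only the range of $P$, not $\linf(\G)$; they tell you nothing about $b_n-a$, which typically does not lie in $\Fix L_\omega$. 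Choi's multiplicative-domain theorem gives information about elements on which a u.c.p.\ map is multiplicative, but here you have no control over $L_\omega(a^2)$ itself (only over $P(L_\omega(a^2))$), so there is no leverage to place $a$ in the multiplicative domain of $L_\omega$ or of the individual $L_{\omega_n}$. In short, from $P((b_n-a)^2)=0$ one cannot conclude $b_n=a$ without an additional faithfulness hypothesis that is simply unavailable.

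The paper avoids this obstacle by following Willis's original route. The point of \cite[Lemma~1.1]{Willis} is that one works inside a closed convex \emph{semigroup} $\mathcal{F}\subset\PUG$ and constructs $\omega$ by an iterative procedure that exploits convolution (not merely convex combination): at each stage one replaces a tentative $\omega$ by Ces\`aro-type averages and convolution products so as to force finitely many prescribed elements of $X=\normcl{\bigcup_n I_{\omega_n}}$ to lie $\epsilon$-close to $I_\omega$, and a diagonal/compactness argument (using separability of $\lone(\G)$) produces the limit. The semigroup structure is what makes $I_{\omega_n}\subseteq I_\omega$ automatic along the construction, precisely the inclusion your convex-combination ansatz cannot deliver.
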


\begin{proof}
	The proof proceeds in two steps. First one shows a counterpart of Lemma 1.1 in \cite{Willis}, namely the following result: suppose $X\subset L^1(\QG)$ is a closed subspace and $\mathcal{F}\subset \PUG$ is a closed convex semigroup such that $I_{\omega}\subset X$ for $\omega \in \mathcal{F}$ and that for each finite subset $A\subset X$ and $\epsilon >0$ one can find $\omega_{A, \epsilon} \in \mathcal{F}$ such that for each $f \in A$ we have $d(f, I_{\omega_{A, \epsilon}})< \epsilon$. Then $X= I_\omega$ for some $\omega \in \mathcal{F}$. 
	The proof follows line by line as in \cite[Lemma 1.1]{Willis}. Secondly, one uses this fact to prove that every chain in $\mathcal{I}$ has an upper bound and concludes by the Kuratowski-Zorn lemma.
\end{proof}

\begin{theorem}
	Assume that $L^1(\QG)$ is separable. Consider the following
        list of conditions: 
	\begin{rlist}
		\item $\G$ is coamenable;
		\item $\G$ is amenable;
		\item for each $\omega \in \PUG$, the right ideal
                  $I_{\omega}$  admits a bounded left approximate  identity; 
		\item the collection $\mathcal{I}:= \{I_{\omega}: \omega \in \PUG\}$ admits a unique maximal element $I_{\textup{max}}$.
	\end{rlist}
	Then the following implications/equivalences hold:
        (ii)$\Longleftrightarrow$(iv), (i)$\Longrightarrow$(iii) and
        (i)+(ii)$\Longleftrightarrow$(iii)+(iv). 
	Moreover if (iv) holds then $I_{\textup{max}}=L^1_0(\QG)$; in
        particular (iv) holds if and only if the augmentation ideal
        belongs to $\mathcal{I}$. 
\end{theorem}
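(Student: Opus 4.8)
The plan is to funnel every implication through the single condition $(*)$: $L^1_0(\QG)\in\mathcal{I}$, that is, $L^1_0(\QG)=I_{\omega_0}$ for some $\omega_0\in\PUG$; by Lemma~\ref{preann} this says exactly that there is a state $\omega_0$ with $\Fix L_{\omega_0}=\C 1$. I would first settle (iv)$\iff(*)$ together with the ``moreover'' clause. Since $I_\omega\subseteq L^1_0(\QG)$ for every $\omega\in\PUG$ (Lemma~\ref{preann}), condition $(*)$ forces $L^1_0(\QG)$ to be a member of $\mathcal{I}$ containing all the others, hence the unique maximal element, so $I_{\textup{max}}=L^1_0(\QG)$. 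For the converse I would use $\overline{\bigcup_{\omega\in\PUG}I_\omega}=L^1_0(\QG)$, where $\subseteq$ is Lemma~\ref{preann} and $\supseteq$ holds because already the normal states satisfy $\bigcap_{\omega}\Fix L_\omega=\C 1$ (an $x$ fixed by all such $L_\omega$ has $\cop(x)=1\ot x$, and invariant elements are scalar). Granting (iv), the proposition preceding this theorem places every $I_\omega$ below a maximal element, which by assumption is the unique $I_{\textup{max}}$; thus $\bigcup_\omega I_\omega\subseteq I_{\textup{max}}$ and $L^1_0(\QG)\subseteq I_{\textup{max}}\subseteq L^1_0(\QG)$, giving $I_{\textup{max}}=L^1_0(\QG)\in\mathcal{I}$, i.e.\ $(*)$.

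Next I would establish (ii)$\iff(*)$, which with the above yields (ii)$\iff$(iv). For $(*)\Rightarrow$(ii): if $\Fix L_{\omega_0}=\C1$ with $\omega_0$ a state, then the Ces\`aro idempotent $\wt{\omega_0}_p$ is again a state (Lemma~\ref{statelimits}), lies in $M^u(\G)$ (Lemma~\ref{idempalternative}), and $L_{\wt{\omega_0}_p}$ projects onto $\Fix L_{\omega_0}=\C1$ (Lemma~\ref{lem:fix_L}). Writing $L_{\wt{\omega_0}_p}(x)=m(x)1$ defines a state $m$ on $\linf(\G)$, and since $L_{\wt{\omega_0}_p}$ commutes with every right convolution $R_\phi(x)=(\id\ot\phi)\cop(x)$ (Proposition~\ref{prop:collection}) one computes $m((\id\ot\phi)\cop(x))=\phi(1)m(x)$, i.e.\ $(m\ot\id)\cop(x)=m(x)1$; thus $m$ is an invariant mean and $\QG$ is amenable. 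For (ii)$\Rightarrow(*)$ I would feed the auxiliary result used in the proof of the previous proposition (the analogue of \cite[Lemma~1.1]{Willis}) with $X=L^1_0(\QG)$ and $\mathcal{F}=\PUG$; the only substantial hypothesis is the approximation property, that for $f_1,\dots,f_n\in L^1_0(\QG)$ and $\epsilon>0$ there is a state $\omega$ with $\|\omega\star f_j\|<\epsilon$ for all $j$ (so that $d(f_j,I_\omega)\le\|\omega\star f_j\|<\epsilon$). I would show $0$ lies in the norm closure of the convex set $\{(\omega\star f_1,\dots,\omega\star f_n):\omega\in\PUG\}\subseteq L^1(\QG)^n$ by contradiction: a separating $(x_1,\dots,x_n)$ would give $\mathrm{Re}\,\omega(y)\ge\delta>0$ for all states $\omega$, where $y=\sum_j(\id\ot f_j)\Cop(x_j)$, hence $\mathrm{Re}(y)\ge\delta 1$; applying the invariant mean $m$ supplied by amenability (after passing to $\linf(\G)$ via $\wt\Lambda$) yields $m(y)=\sum_j f_j(1)\,m(x_j)=0$ since each $f_j\in L^1_0(\QG)$, contradicting $m(\mathrm{Re}(y))\ge\delta$. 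The lemma then returns $L^1_0(\QG)=I_\omega$, i.e.\ $(*)$.

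To handle the conditions involving (iii) I would first prove (i)$\Rightarrow$(iii). Using $\Fix L_\omega=\Fix L_{\wt\omega_p}$ (Lemma~\ref{lem:fix_L}) we have $I_\omega=I_{e}$ for the idempotent state $e:=\wt\omega_p\in M^u(\G)$, and $I_\omega=\overline{(\epsilon-e)\star L^1(\QG)}$, where $\epsilon$ is the counit, i.e.\ the identity of $M^u(\G)$, and $\epsilon-e$ is idempotent. Coamenability provides a bounded approximate identity $(e_\alpha)$ for $L^1(\QG)$, and I would verify that $u_\alpha:=(\epsilon-e)\star e_\alpha=e_\alpha-e\star e_\alpha\in I_\omega$ is a bounded left approximate identity: for $g=(\epsilon-e)\star f$ one has $u_\alpha\star g=(\epsilon-e)\star(e_\alpha\star g)\to(\epsilon-e)\star g=g$, using $e_\alpha\star g\to g$ and $(\epsilon-e)^{\star 2}=\epsilon-e$. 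Together with (ii)$\Rightarrow$(iv) this gives (i)+(ii)$\Rightarrow$(iii)+(iv). For the reverse (iii)+(iv)$\Rightarrow$(i)+(ii): (iv)$\Rightarrow$(ii) is already in hand, while (iv)$\Rightarrow(*)$ produces $\omega_0$ with $L^1_0(\QG)=I_{\omega_0}$, which by (iii) carries a bounded left approximate identity $(u_\beta)$. Since $L^1_0(\QG)=\ker\chi$ for the character $\chi(f)=f(1)$, choosing $f_0$ with $\chi(f_0)=1$ and setting $e_\beta:=u_\beta+f_0-u_\beta\star f_0$ gives, after a short computation using $\chi(f_0\star f)=\chi(f)$, a bounded left approximate identity for all of $L^1(\QG)$; hence $\QG$ is coamenable, proving (i).

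The heart of the argument, and the main obstacle, is the equivalence (ii)$\iff(*)$. In $(*)\Rightarrow$(ii) one must upgrade a Ces\`aro idempotent into a genuine invariant mean, and in (ii)$\Rightarrow(*)$ one must extract, from abstract amenability, a single state whose bounded harmonic elements reduce to the scalars; this is obtained only indirectly, through the separation argument feeding the Willis-type approximation lemma, whose delicate point is the passage from positivity of $\mathrm{Re}\,\omega(y)$ against all universal states $\omega$ to positivity of $y$ inside $\linf(\G)$, where the invariant mean can finally be applied. By contrast (i)$\Rightarrow$(iii) and the augmentation-ideal step in (iii)+(iv)$\Rightarrow$(i) are routine Banach-algebra manipulations once the reduction $I_\omega=I_{\wt\omega_p}$ to an idempotent is available.
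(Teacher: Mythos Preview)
Your overall architecture is sound and several parts match the paper closely, but there is a genuine gap in your proof of (i)$\Rightarrow$(iii). You write ``$e:=\wt\omega_p\in M^u(\G)$'' and then build $u_\alpha=(\epsilon-e)\star e_\alpha$. However, Lemma~\ref{idempalternative} only gives an \emph{alternative}: the idempotent state $\wt\omega_p\in\ruc^u(\G)^*$ lies either in $M^u(\G)$ or in $C_0^u(\G)^\perp$. For a non-compact $\G$ and a non-degenerate state $\omega$ the first alternative is actually \emph{excluded}: a non-zero idempotent in $M^u(\G)$ would force non-zero fixed points in $C_0(\G)$ (Proposition~\ref{prop:more-equiv}), contradicting Corollary~\ref{cor:non-deg}. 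So generically $\wt\omega_p\in C_0^u(\G)^\perp$, and then $e\star e_\alpha\in C_0^u(\G)^\perp$ by Corollary~\ref{cor:decomp}; in particular $e\star e_\alpha\notin L^1(\G)$ (its $M^u(\G)$-component vanishes), so your $u_\alpha$ is not even an element of $L^1(\G)$, let alone of $I_\omega$. The same erroneous claim appears earlier in your $(*)\Rightarrow$(ii), though there it is harmless because the construction of $m$ via $L_{\wt\omega_p}$ does not actually require $\wt\omega_p\in M^u(\G)$.

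The paper avoids this by never passing to the limit: it uses the Ces\`aro averages $S_n(\omega)\in M^u(\G)$ themselves and takes the double-indexed net $\bigl(e_\lambda-S_n(\omega)\star e_\lambda\bigr)_{\lambda,n}$, which lies in $I_\omega$ (a telescoping shows $e_\lambda-\omega^{\star k}\star e_\lambda\in I_\omega$ for every $k$), is bounded by $2\sup_\lambda\|e_\lambda\|$, and is a left b.a.i.\ because for $g=f-\omega\star f$ one has $S_n(\omega)\star g=\tfrac1n(\omega\star f-\omega^{\star(n+1)}\star f)\to 0$. This is the missing idea. Two minor points: in (iii)+(iv)$\Rightarrow$(i) your construction only yields a \emph{left} b.a.i.\ for $L^1(\G)$; you should note (as the paper does for $L^1_0(\G)$) that composing with the unitary antipode $R$ turns it into a right one, whence a two-sided b.a.i.\ and coamenability. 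And for (ii)$\Leftrightarrow(*)$ your arguments are correct but amount to reproving \cite[Theorem~4.2]{knr}; the paper simply cites that result for (ii)$\Rightarrow$(iv) and sketches the invariant-mean construction (via $R_\gamma$ with $\gamma$ a weak$^*$ limit of $S_n(\omega)\star\mu$ for a normal state $\mu$) for the converse.
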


\begin{proof}
	We begin by proving the last statement. Suppose then that (iv) holds and we have the largest ideal $I_{\textup{max}} \in \mathcal{I}$. By Lemma \ref{preann} we have $I_{\textup{max}}^\perp =  \{x\in L^{\infty}(\QG): \forall_{\omega \in \PUG}\, L_{\omega}(x) =x \}$. Suppose then that $x \in L^\infty(\QG)$ and $L_\omega(x) = \omega(1)x$ for all $\omega \in L^1(\QG)$. This is equivalent to $\Com(x)= 1 \ot x$. As is well-known this holds if and only if $x \in \bc 1$ (see for example Result 5.13 of \cite{KV}) and $I_{\textup{max}} = L^1_0(\QG)$.

	The implication (ii)$\Longleftarrow$(iv) follows now from the forward implication of Theorem 4.2 of \cite{knr}. Further (iv)$\Longrightarrow$(ii)
	follows as in the backward implication of Theorem 4.2 of \cite{knr}, although the latter was formulated only for $\omega \in \PG$. Again, for completeness we sketch the proof. Suppose that (iv) holds, i.e.\ there is $\omega \in \PUG$ such that $I_\omega = L^1_0(\QG)$; equivalently, $\Fix L_\omega = \bc 1$. Pick a state $\mu \in L^1(\QG)$ and consider normal states $\omega_n:= S_n(\omega) \star \mu$, $n \in \bn$. Let $\gamma \in S(L^\infty(\QG))$ be the weak$^*$ limit point of the sequence $(\mu_n)_{n \in \bn}$ along some free ultrafilter. Then for every $x \in L^\infty(\QG)$ we have $R_\gamma(x) \in \Fix L_\omega = \bc 1$ so that we can define a state $\gamma'$ on $L^\infty(\QG)$ by the formula
	$\gamma'(x)1 = R_\gamma(x)$, $x \in L^\infty(\QG)$. One can then check that $\gamma = \gamma'$ and in fact $\gamma$ is a left invariant mean  on   $L^\infty(\QG)$.

	The implication (i)$\Longrightarrow$(iii) follows as in the intro to \cite{Willis}. Indeed, fix $\omega \in \PUG$. The fact that $\QG$ is coamenable implies that $L^1(\QG)$ admits a bounded approximate identity, say $(u_\lambda)_{\lambda \in \Lambda}$. Then one can check that the double-indexed net $(u_{\lambda} - S_n (\omega) \star u_\lambda)_{\lambda \in \Lambda, n \in \bn}$ is a bounded left approximate identity in $I_\omega$. 
	
	The implication (iii)+(iv) $\Longrightarrow$ (i)+(ii) follows from Proposition 16 of \cite{HNRSIN}; formally the formulation in \cite{HNRSIN} requires the two-sided bounded approximate identity in $L^1_0(\QG)$, but as $L^1_0(\QG)$ is an involutive Banach algebra (with the involution given by composing with the unitary antipode), the existence of a one-sided  bounded approximate identity implies that the two-sided one also exists.
\end{proof}


\section{Fixed points in $C_0(\G)$}


In this section, we consider the case when there are fixed points in $C_0(\G)$.
This only happens in the following cases: $\G$ is compact or $\omega$
is degenerate.

\begin{lem} \label{MGlimit}
Let $\omega \in \M^u(\QG)_1$ and suppose that
the  Ces\`aro averages $S_n(\omega)$ do not converge to $0$ weak$^*$ on
$C_0^u(\QG)$. 
Then the weak* limit 
\[
\wt\omega := \wlim_{n\to \infty} S_n(\omega)
\]
exists and $\wt\omega$ is a non-zero contractive idempotent in $M^u(\G)$.
\end{lem}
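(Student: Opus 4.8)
The plan is to work entirely inside $M^u(\G)=C_0^u(\G)^*$ (Corollary~\ref{cor:decomp}), whose unit ball is weak$^*$-compact by Banach--Alaoglu, and to regard $(S_n(\omega))_n$ as a sequence in that ball. The assertion to be proved is exactly that this sequence converges weak$^*$ in $M^u(\G)$, so I would first invoke the standard facts that in a compact space the cluster points of a sequence are precisely its limits $\beta_p:=\plim_p S_n(\omega)\in M^u(\G)$ taken along free ultrafilters $p$, and that a sequence possessing a \emph{unique} cluster point converges to it. This reduces the statement to two points: that all the functionals $\beta_p$ coincide, and that their common value is a non-zero contractive idempotent.

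The engine for the coincidence is the absorption identity \eqref{eq:absorb} combined with the separate weak$^*$-continuity of convolution on $M^u(\G)$. Concretely, for fixed $\omega\in M^u(\G)$ the right convolution $\omega'\mapsto\omega'\star\omega$ is weak$^*$-continuous because $(\omega'\star\omega)(a)=\omega'(R_\omega^u(a))$ with $R_\omega^u(a)\in C_0^u(\G)$ (cf.\ the proof of Corollary~\ref{cor:decomp}), and for fixed $\beta\in M^u(\G)$ the left convolution $\nu\mapsto\beta\star\nu$ is weak$^*$-continuous because $(\beta\star\nu)(a)=\nu(L_\beta^u(a))$ with $L_\beta^u(a)\in C_0^u(\G)$ (Proposition~\ref{prop:convo-ops}(ii)). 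Passing to the limit along $p$ in \eqref{eq:absorb} (the two correction terms vanish in norm) then gives $\beta_p\star\omega=\omega\star\beta_p=\beta_p$, whence $\beta_p\star S_m(\omega)=S_m(\omega)\star\beta_p=\beta_p$ for every $m$ by associativity. For two free ultrafilters $p,q$ I would then compute $\beta_p\star\beta_q$ in the two available ways: using right-continuity, $\beta_p\star\beta_q=\plim_q(\beta_p\star S_m(\omega))=\beta_p$, while using left-continuity, $\beta_p\star\beta_q=\plim_p(S_n(\omega)\star\beta_q)=\beta_q$. Hence $\beta_p=\beta_q$, so there is a single cluster point $\wt\omega$.

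It then remains to collect the conclusions. Unique cluster point plus compactness yields $S_n(\omega)\to\wt\omega$ weak$^*$ in $M^u(\G)$; the hypothesis that $S_n(\omega)$ does \emph{not} converge to $0$ on $C_0^u(\G)$ forces $\wt\omega\neq 0$ (otherwise $0$ would be the unique cluster point and the sequence would converge to $0$); idempotency is the special case $p=q$ of the computation above, $\wt\omega\star\wt\omega=\wt\omega$; and $\|\wt\omega\|\le 1$ follows from weak$^*$ lower semicontinuity of the norm since each $S_n(\omega)\in M^u(\G)_1$. The one genuinely delicate point, and the step I would be most careful about, is the two-directional weak$^*$-continuity: it is essential that convolution by an element of $M^u(\G)$ maps $C_0^u(\G)$ back into $C_0^u(\G)$ on both sides, so that right- and left-multiplication are each weak$^*$-continuous, and it is the \emph{interplay} of these two continuities that collapses all ultrafilter limits to one. (Alternatively, one could route the idempotency and absorption through Lemmas~\ref{statelimits} and~\ref{idempalternative} applied in $\ruc^u(\G)^*$, but the direct argument above is self-contained.)
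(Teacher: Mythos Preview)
Your argument is correct and follows the same overall strategy as the paper: show that all weak$^*$ cluster points of $(S_n(\omega))$ coincide by exploiting the absorption identity and computing $\beta_p\conv\beta_q$ in two ways, then invoke compactness of the unit ball.

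The one genuine difference is the ambient space. The paper takes its ultrafilter limits $\wt\omega_p$ in $\ruc^u(\G)^*$ (as they were defined earlier), and then must invoke Lemma~\ref{idempalternative} twice to know that both $\wt\omega_p$ and an arbitrary further cluster point $\wt\omega_q$ actually lie in $M^u(\G)$; only then can it form $R^u_{\wt\omega_q}$ and run the symmetry argument. You instead take cluster points directly in $M^u(\G)=C_0^u(\G)^*$, which is legitimate since the unit ball there is already weak$^*$-compact and the sequence lives in it. This sidesteps Lemma~\ref{idempalternative} entirely and makes the two-sided weak$^*$-continuity of convolution (via $R^u_\omega(C_0^u(\G))\subset C_0^u(\G)$ and $L^u_\beta(C_0^u(\G))\subset C_0^u(\G)$ from Proposition~\ref{prop:convo-ops}(ii)) do all the work. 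Your route is a little more self-contained; the paper's route has the advantage of making explicit contact with the $\ruc^u(\G)^*$ framework already set up, and recovers as a by-product that the $\ruc^u(\G)^*$-limits coincide with the $M^u(\G)$-limits.
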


\begin{proof}
Since the  Ces\`aro sums $S_n(\omega)$ do not converge to $0$ weak$^*$ on
$C_0^u(\QG)$, there is some free ultrafilter $p$ such that  the limit
$\wt{\omega}_{p}$ is non-zero on $C_0^u(\G)$.
By Lemma~\ref{idempalternative}, $\wt{\omega}_{p}$ is a contractive
idempotent in $M^u(\G)$.
We shall show that in fact $\wt{\omega}_{p}$ is the weak* limit of
the sequence $(S_n(\omega))_{n=1}^\infty$ by showing that
every subnet of $(S_n(\omega))_{n=1}^\infty$ has a subnet converging
to $\wt{\omega}_{p}$ (see \cite[Exercise~11D.(c)]{willard}).
Since $(S_n(\omega))_{n=1}^\infty$ is bounded, every subnet of
$(S_n(\omega))_{n=1}^\infty$ has a subnet converging weak* 
to some $\wt{\omega}_{q}$, where $q$ is some free ultrafilter.
Let $a\in C_0^u(\G)$ be arbitrary. 
Since $\wt\omega_p \conv \omega = \wt\omega_p$, it follows that
$\omega^{\conv k}(L_{\wt\omega_p}^u(a)) = \wt\omega_p(a)$. 
Therefore
$S_n(\omega)\bigl(L_{\wt\omega_p}^u(a)\bigr) = \wt\omega_p(a)$
and so $\wt\omega_q(L_{\wt\omega_p}^u(a)) = \wt\omega_p(a)$.
It follows from Lemma~\ref{idempalternative}
that $\wt\omega_q$ is also a non-zero contractive idempotent in $M^u(\G)$.

Since $\wt\omega_q\in M^u(\G)$, we have by
Proposition~\ref{prop:collection} that
\[
\wt\omega_p = \wt\omega_q\circ L_{\wt\omega_p}^u = \wt\omega_p\conv \wt\omega_q
= \wt\omega_p\circ R_{\wt\omega_q}^u. 
\]
But
\[
\wt\omega_p\circ R_{\wt\omega_q}^u(a) =
\plim S_n(\omega)\bigl(R_{\wt\omega_q}^u(a)\bigr)  
= \wt\omega_q(a)
\]
as $\omega\conv \wt\omega_q = \wt\omega_q$. Consequently, $\wt\omega_q
= \wt\omega_p$, as required.
\end{proof}

\begin{remark}
The final condition in the following result may be interpreted
as a weak analogue to `the subgroup generated by the support
of $\omega$ is compact'.
Note that classically the closed sub\emph{semi}group generated by some
set is compact if and only if the closed subgroup generated by the
set is compact (because a compact semigroup with cancellation laws
is a group). See also Corollary~\ref{cor:non-deg}.
\end{remark}

The first result covers the general, possibly degenerate case.

\begin{prop}\label{prop:more-equiv}
Let $\omega \in \M^u(\QG)_1$. Then the following are equivalent:
\begin{rlist}
\item Ces\`aro sums $S_n(\omega)$ do not converge to $0$ weak$^*$ on
  $C_0^u(\QG)$;
\item for some (equivalently for every) free ultrafilter $p$ the functional $\wt \omega_p:= \plim S_n(\omega) $ is a non-zero contractive idempotent in $M^u(\QG)$;
\item $L_\omega^u$ has a non-zero fixed point in $C_0^u(\QG)$;
\item $L_{\omega}$ has a non-zero fixed point in $C_0(\QG)$;
\item there is a non-zero $\tau\in M^u(\QG)$ such that $\tau\conv
  \omega = \tau$; 
\item $L_{\omega}$ has a non-zero fixed point in $\linf(\QG)$ and
      there exists $e$ in $C_0^u(\QG)_+$ such that
      $|\omega|^{\conv k}.e = e.|\omega|^{\conv k} = |\omega|^{\conv k}$
      and  $ae - a\in N_{|\omega|^{\conv k}}$
      for every $k=1$, $2$, \ldots\ and $a\in C_0^u(\QG)$.
\end{rlist}
\end{prop}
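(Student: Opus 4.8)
The plan is to prove the block $(i)\Leftrightarrow(ii)\Leftrightarrow(iii)\Leftrightarrow(iv)\Leftrightarrow(v)$ first, organising everything around the Ces\`aro limit $\wt\omega_p$ and the splitting $\ruc^u(\G)^*=M^u(\G)\oplus_1 C_0^u(\G)^\perp$ of Corollary~\ref{cor:decomp}, and only afterwards to graft on $(vi)$. For $(i)\Leftrightarrow(ii)$ I would invoke Lemma~\ref{MGlimit}: if $S_n(\omega)$ does not converge to $0$ weak${}^*$ on $C_0^u(\G)$ then the genuine weak${}^*$ limit $\wt\omega_p$ exists, is independent of $p$, and is a non-zero contractive idempotent lying in $M^u(\G)$ by Lemma~\ref{idempalternative}; conversely a non-zero element of $M^u(\G)=C_0^u(\G)^*$ is automatically non-zero on $C_0^u(\G)$. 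Writing $\wt\omega:=\wt\omega_p$, the implication $(ii)\Rightarrow(v)$ is immediate with $\tau=\wt\omega$, since $\wt\omega\conv\omega=\wt\omega$ by Lemma~\ref{statelimits}. For $(v)\Rightarrow(i)$ I would iterate $\tau\conv\omega=\tau$ to get $\tau\conv S_n(\omega)=\tau$, rewrite this as $\tau(a)=S_n(\omega)\bigl(L_\tau^u(a)\bigr)$ via Proposition~\ref{prop:collection}(i), and note $L_\tau^u(a)\in C_0^u(\G)$ for $a\in C_0^u(\G)$ (Proposition~\ref{prop:convo-ops}(ii)); so if $S_n(\omega)\to 0$ on $C_0^u(\G)$ then $\tau$ vanishes on $C_0^u(\G)$, i.e.\ $\tau=0$, a contradiction.

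The links to $(iii)$ and $(iv)$ use that $L^u_{\wt\omega}$ and $L_{\wt\omega}$ are idempotents. For $(ii)\Rightarrow(iii)$: $L^u_{\wt\omega}$ is an idempotent on $C_0^u(\G)$ (Proposition~\ref{prop:convo-ops}(ii) and Proposition~\ref{prop:collection}(iv)), and it is non-zero there, for otherwise $\wt\omega(a)=\wt\omega(L^u_{\wt\omega}(a))=0$ for all $a\in C_0^u(\G)$; its range then contains a non-zero fixed point of $L^u_\omega$, using $L^u_\omega\circ L^u_{\wt\omega}=L^u_{\wt\omega\conv\omega}=L^u_{\wt\omega}$. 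The converse $(iii)\Rightarrow(ii)$ runs as in Lemma~\ref{MGlimit}: a non-zero fixed point $x\in C_0^u(\G)$ satisfies $L^u_{\wt\omega_p}(x)=x$ (tested against $M^u(\G)$, which separates $C_0^u(\G)^{**}$), so $\wt\omega_p\notin C_0^u(\G)^\perp$ because functionals in $C_0^u(\G)^\perp$ annihilate $C_0^u(\G)$ under $L^u$ (Corollary~\ref{cor:decomp}); hence $\wt\omega_p\in M^u(\G)$ is non-zero. Symmetrically, $(ii)\Leftrightarrow(iv)$ follows because $L_{\wt\omega}$ is a normal idempotent on $C_0(\G)$ that cannot vanish — otherwise $\wt\omega$ would vanish on $\ruc^u(\G)$ by normality and weak${}^*$-density of $C_0(\G)$ in $\linf(\G)$ — while a functional in $C_0^u(\G)^\perp$ gives $L_{\wt\omega_p}=0$ on $C_0(\G)$ as in the proof of Proposition~\ref{prop:collection}(vii), ruling out the bad component of $\wt\omega_p$.

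For the forward half $(iii)\Rightarrow(vi)$ I would take a norm-one fixed point $x\in C_0^u(\G)$ of $L^u_\omega$, choose $\mu\in M^u(\G)$ with $\|\mu\|=\mu(x)=1$, and set $v=R^u_\mu(x)^*$; crucially $R^u_\mu$ maps $C_0^u(\G)$ into itself (as used in Corollary~\ref{cor:decomp}), so $v\in C_0^u(\G)$. As in Theorem~\ref{thm:group-like}, $\omega(v^*)=\mu(L^u_\omega(x))=1$, $\|v\|=1$, and $v^*$ is fixed by $L^u_\omega$ since it commutes with $R^u_\mu$ (Proposition~\ref{prop:collection}(iv)); thus Lemma~\ref{lemma:group-like} applies and yields $\omega^{\conv k}=v.|\omega|^{\conv k}$ together with the multiplicative-domain identities for $e:=v^*v\in C_0^u(\G)_+$. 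The relations $|\omega|^{\conv k}.e=e.|\omega|^{\conv k}=|\omega|^{\conv k}$ are exactly those identities, and $ae-a\in N_{|\omega|^{\conv k}}$ is a one-line expansion using that $e$ lies in the multiplicative domain of $|\omega|^{\conv k}$ with $|\omega|^{\conv k}(e)=1$. The first clause of $(vi)$ holds by Theorem~\ref{lemma:cesaro-nilpotent}, a fixed point in $C_0^u(\G)\subset\ruc^u(\G)$ giving one in $\linf(\G)$.

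The reverse implication is the crux. From $e\in C_0^u(\G)_+$ one gets $|\omega|^{\conv k}(e)=1$, whence the idempotent state $\wt{|\omega|}_p:=\plim S_n(|\omega|)$ satisfies $\wt{|\omega|}_p(e)=1$ and so lies non-trivially in $M^u(\G)$; thus $|\omega|$ already satisfies $(i)$--$(v)$. The first clause of $(vi)$ feeds Lemma~\ref{lemma:v-group-like}, producing $v\in C_0^u(\G)^{**}$ with $\omega^{\conv k}=v.|\omega|^{\conv k}$ and $\wt\COP^{(k)}(v)-v^{\ot(k+1)}\in N_{|\omega|^{\ot(k+1)}}$ for all $k$. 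The idea is to transfer non-triviality from $|\omega|$ to $\omega$ through the single ``polar'' isometry $v$: set $\tau:=v.\wt{|\omega|}_p\in M^u(\G)$ and check $\tau\conv\omega=\tau$ by writing $\tau\conv\omega(a)=(\wt{|\omega|}_p\ot|\omega|)\bigl(\COP(a)(v\ot v)\bigr)$, replacing $v\ot v$ by $\wt\COP(v)$ modulo $N_{\wt{|\omega|}_p\ot|\omega|}$, and letting the idempotence of $\wt{|\omega|}_p$ collapse the result to $\wt{|\omega|}_p(av)=\tau(a)$; non-triviality of $\tau$ comes from its normal extension at $v^*$, equal to $\wt{|\omega|}_p(v^*v)=\wt{|\omega|}_p(e)=1$. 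This gives $(v)$, hence $(i)$. The main obstacle — and where the real care lies — is that $v$ only lives in $C_0^u(\G)^{**}$, so neither $\tau\conv\omega=\tau$ nor $\wt{|\omega|}_p(v^*v)=\wt{|\omega|}_p(e)$ is a formal manipulation: both reduce to showing that the defects $\wt\COP^{(k)}(v)-v^{\ot(k+1)}$ and the difference $v^*v-e$ lie in the relevant null spaces of $\wt{|\omega|}_p$, which in turn rests on the support domination $s(\wt{|\omega|}_p)\le\bigvee_k s(|\omega|^{\conv k})$ for the idempotent state $\wt{|\omega|}_p$. Establishing this last point — the quantum analogue of ``the closed subsemigroup generated by $\supp|\omega|$ is a subgroup'' — and thereby legitimising the bidual computations is where I expect the bulk of the work to lie.
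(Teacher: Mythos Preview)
Your treatment of the block $(i)$--$(v)$ is essentially the paper's, with only cosmetic rearrangements. Your $(iii)\Rightarrow(vi)$ via $e:=v^*v$ with $v=R^u_\mu(x)^*\in C_0^u(\G)$ is in fact \emph{cleaner} than the paper's route: the paper starts from $(iv)$, produces $v\in C_0^u(\G)$ via Lemma~\ref{lemma:v-group-like-two}, but then constructs $e$ as $R_\nu(xx^*)/\nu(xx^*)$ and passes through the intermediate identity $|\wt\omega|=\wt{|\omega|}$ to place $e$ in the multiplicative domain of each $|\omega|^{\conv k}$. Your choice $e=v^*v$ reads this off directly from Lemma~\ref{lemma:group-like}, and the verification of $ae-a\in N_{|\omega|^{\conv k}}$ is then the one-line expansion you indicate.

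For $(vi)\Rightarrow(i)$, however, you have made your life unnecessarily hard, and the obstacle you flag is real for your route but entirely avoidable. The paper does not try to manufacture a $\tau\in M^u(\G)$ satisfying $(v)$, nor does it need any support-domination statement for $\wt{|\omega|}_p$. It argues directly at the level of each Ces\`aro sum, where the bidual difficulty evaporates. Concretely: by Theorem~\ref{lemma:cesaro-nilpotent} there is $x\in\ruc^u(\G)$ with $\wt\omega_p(x)\ne0$; take $v\in C_0^u(\G)^{**}$ from Lemma~\ref{lemma:v-group-like}. Since $|\omega|^{\conv k}.e=|\omega|^{\conv k}$ in $M^u(\G)$, the same holds for their (unique) normal extensions, so $|\omega|^{\conv k}(ey)=|\omega|^{\conv k}(y)$ for every $y\in C_0^u(\G)^{**}$. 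Then for each fixed $k$,
\[
\omega^{\conv k}(ex)=|\omega|^{\conv k}(exv)=|\omega|^{\conv k}(xv)=\omega^{\conv k}(x),
\]
using $\omega^{\conv k}=v.|\omega|^{\conv k}$ twice and the $e$-identity once. Averaging and passing to the $p$-limit gives $\wt\omega_p(ex)=\wt\omega_p(x)\ne0$; since $e\in C_0^u(\G)$ and $x\in M(C_0^u(\G))$, the element $ex$ lies in $C_0^u(\G)$, which is $(i)$. The point is that all the algebra with $v$ happens \emph{before} the limit, at the level of the genuine functionals $|\omega|^{\conv k}\in M^u(\G)$, so one never needs to know how $\wt{|\omega|}_p$ interacts with $v^*v$ in the bidual.
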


The proof of this result will be  based on the following lemma.
For $\nu\in M(\G)$, we define
\[
R_\nu : C_0(\G)\to C_0^u(\G), \qquad
R_\nu(a) = (\id \ot \nu)\Cop(a).
\]
This is possible because \eqref{eq:reducing} implies that
$\Cop(a) \in M(C_0^u(\G)\ot C_0(\G))$ for every $a\in C_0(\G)$.

\begin{lem} \label{lemma:v-group-like-two}
Let $\omega\in \M^u(\G)_1$.
If $\lt_\omega$ has a non-zero fixed point in $C_0(\QG)$,
then there exists $v\in C_0(\G)$ such that
\[
\COP^{(k)}(v) - v^{\ot k+1} \in N_{|\omega|^{\ot k+1}} \qquad \text{and} \qquad
\omega^{\conv k} = v.|\omega|^{\conv k}
\]
for every $k \in \bn$.
Moreover, $|\omega|^{\conv k}(av^* v) = |\omega|^{\conv k}(v^* v a) = |\omega|^{\conv k}(a)$
for every $a\in C_0^u(\G)$ and $k \in \bn$.
\end{lem}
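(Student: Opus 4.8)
The plan is to write down $v$ explicitly, by right--convolving the given fixed point with a norming functional, and then to feed it into Lemma~\ref{lemma:group-like}; this is the same strategy as for Lemma~\ref{lemma:v-group-like}, but the explicit formula will place $v$ in $C_0^u(\G)$ itself rather than merely in its bidual. First I would take a non-zero fixed point $x\in C_0(\G)$ of $L_\omega$ and rescale it so that $\|x\|=1$. By Hahn--Banach I choose $\mu\in C_0(\G)^*$ with $\|\mu\|=1$ and $\mu(x)=\|x\|=1$, and, using the map $R_\mu\col C_0(\G)\to C_0^u(\G)$ introduced above, I set
\[
v:=R_\mu(x)^*=\bigl((\id\ot\mu)\Cop(x)\bigr)^*\in C_0^u(\G),
\]
which is well defined because $\Cop(x)\in M(C_0^u(\G)\ot C_0(\G))$. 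Since the first leg of $\Cop$ is universal, $v$ lands in $C_0^u(\G)$, which is precisely the space for which the displayed identities (involving $\COP^{(k)}(v)$ and $v^*v$ tested against $a\in C_0^u(\G)$) are meaningful.

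Next I would verify the three hypotheses of Lemma~\ref{lemma:group-like}. Recalling that $(\omega\ot\id)\Cop(x)=L_\omega(x)=x$, slicing yields
\[
\omega(v^*)=(\omega\ot\mu)\Cop(x)=\mu\bigl((\omega\ot\id)\Cop(x)\bigr)=\mu(x)=1,
\]
and as $\|v\|=\|R_\mu(x)\|\le\|\mu\|\,\|x\|=1$ while $\|\omega\|=1$, necessarily $\|v\|=1$. The key point is that $v^*=R_\mu(x)$ is fixed by $L_\omega^u$: the universal left convolution $L_\omega^u$ commutes with the right convolution $R_\mu$, so that $L_\omega^u(R_\mu(x))=R_\mu(L_\omega(x))=R_\mu(x)$. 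As $v^*\in C_0^u(\G)\sub C_0^u(\G)^{**}$ and $\wt{L_\omega^u}$ extends $L_\omega^u$, the element $v^*$ is also a fixed point of $\wt{L_\omega^u}$.

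With $v\in C_0^u(\G)$, $\|v\|=1$, $\omega(v^*)=1$ and $v^*$ fixed by $\wt{L_\omega^u}$ in hand, Lemma~\ref{lemma:group-like} applies directly and delivers $\wt\COP^{(k)}(v)-v^{\ot k+1}\in N_{|\omega|^{\ot k+1}}$, the equalities $\omega^{\conv k}=v.|\omega|^{\conv k}$, and the multiplicative--domain identities for $v^*v$, for every $k\in\bn$. Because $v\in C_0^u(\G)\sub M(C_0^u(\G))$, the normal extension $\wt\COP^{(k)}$ agrees with $\COP^{(k)}$ on $v$, so the conclusions read with $\COP^{(k)}$, and restricting the last identities from $a\in C_0^u(\G)^{**}$ to $a\in C_0^u(\G)$ gives the stated form.

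The hard part will be the commutation $L_\omega^u\circ R_\mu=R_\mu\circ L_\omega$, which mixes the two comultiplications: $\Cop(x)\in M(C_0^u(\G)\ot C_0(\G))$ has a universal first leg and a reduced second leg, and the identity has to be extracted from coassociativity in the form $(\COP\ot\id)\Cop=(\id\ot\Cop)\Cop$ (as used in the proof of Proposition~\ref{prop:convo-ops}), making sure that $\omega$ and $\mu$ slice the correct outer legs. Writing $\Cop(x)=\sum_i a_i\ot b_i$ and applying $(\id\ot\Cop)\Cop$ reduces the computation to $\sum_i\omega(a_i)R_\mu(b_i)=R_\mu(L_\omega(x))$, but the manipulation within the multiplier algebra should be carried out with some care.
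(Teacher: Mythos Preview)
Your approach is essentially identical to the paper's: both choose a norming functional $\mu\in M(\G)_1$ on the fixed point $x\in C_0(\G)$, use the mixed-leg map $R_\mu\col C_0(\G)\to C_0^u(\G)$ to produce $v\in C_0^u(\G)$ explicitly (so that no weak* limit is needed as in Lemma~\ref{lemma:v-group-like}), and then feed $v$ into Lemma~\ref{lemma:group-like}. The commutation $L_\omega^u\circ R_\mu=R_\mu\circ L_\omega$ you flag as ``the hard part'' is indeed just the coassociativity $(\COP\ot\id)\Cop=(\id\ot\Cop)\Cop$ already used in the proof of Proposition~\ref{prop:convo-ops}, and the paper treats it with the same level of brevity.
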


\begin{proof}
Suppose that $\lt_\omega$ has a non-zero fixed point $x$ in
$C_0(\G)$. Then we may choose $\nu\in\M(\G)_1$ such that
$\nu(x) = \|x\|$. Then $v = R_\nu(x^*)/\conj{\nu(x)}\in C_0^u(\G)$
satisfies $\|v\|\le 1$ and $\omega(v^*) = 1$.
Repeating the argument of Lemma~\ref{lemma:v-group-like}, we see that
this $v$ satisfies the statement (we may put in the proof there $\phi_n = \nu$
so that $v_n = v$ for all $n\in \bn$).
\end{proof}

\begin{proof}[Proof of Proposition~\ref{prop:more-equiv}]
  (i)$\implies$(ii)(for every free ultrafilter)  follows from Lemma~\ref{MGlimit}.
  
  (ii)(for a fixed free ultrafilter) $\implies$(iii) is clear because
  every element in $L_{\tilde\omega_p}^u(C_0(\QG))$
  is a fixed point of  $\Fix L_\omega^u$.
  (Note also that the map $\mu \mapsto L_{\mu}^u$ is injective.)

(iii)$\implies$(i):
Let $a\in C_0^u(\QG)$ be a non-zero fixed point of 
$L_{\omega}^u$. 
Consider $\nu \in \M^u(\G)$ such that $\nu(a) \neq 0$. Then we have
\[
\omega^{\conv n}(R_{\nu}(a)) = \nu (L_{\omega^{\conv n}} (a)) = \nu
({L_{\omega}}^n (a))  = \nu (a),
\]
and it follows that $S_n(R_\nu(a)) = \nu(a)\ne 0$ for every $n\in \bn$.

(ii)(for a fixed free ultrafilter)$\implies$(iv):
If $\tilde\omega_p$ is a non-zero contractive idempotent in $M^u(\QG)$,
then $L_{\tilde\omega_p}(C_0(\QG)) = \Fix L_\omega \cap C_0(\QG)$ is  
non-zero (see Lemma~\ref{lem:fix_L}).

(iv)$\implies$(i)
is similar to ``(iii)$\implies$(i)'' when we take $\nu\in M(\G)$
and  $R_\nu: C_0(\G)\to C_0^u(\G)$ is the map defined before
Lemma~\ref{lemma:v-group-like-two}.

(ii)(for a fixed free ultrafilter)$\implies$(v): Take $\tau = \wt\omega_p$.

(v)$\implies$(iii): If $a\in C_0^u(\G)$ is such that $L_\tau^u(a)\ne 0$,
then the latter is a fixed point of $L_\omega^u$ as
$L_\omega^u\circ L_\tau^u = L_{\tau\conv \omega}^u = L_\tau^u$.

(iv)$\implies$(vi):
Suppose that $L_\omega$ has a non-zero fixed point $x\in C_0(\G)$.
Then we can apply Lemma~\ref{lemma:v-group-like-two}
to obtain a suitable $v\in C_0^u(\G)$.
As (iv) implies (i), we know that the limit $\wt\omega := \wlim_{n\to \infty} S_n(\omega)$ exists by Lemma \ref{MGlimit}. We shall then  show that $|\widetilde\omega|(a) = |\omega|\latetilde(a)$
for every $a\in C_0^u(\G)$. 
Note  that $\omega^{\conv k}(a) = |\omega|^{\conv k}(av)$ by
Lemma~\ref{lemma:v-group-like-two}, and
hence
\[ 
\widetilde\omega(a)
= \lim \frac1n \sum_{k=1}^n \omega^{\conv k}(a)
= \lim \frac1n \sum_{k=1}^n |\omega|^{\conv k}(av)
= |\omega|\latetilde(av).
\] 
Recalling that $|\omega|^{\conv k}(av^* v) = |\omega|^{\conv k}(a)$ for all $k \in \bn$, it follows
similarly that $\widetilde\omega(av^*) = |\omega|\latetilde(a)$.
These identities imply that
$\|\widetilde\omega\| = \|\,|\omega|\latetilde\|$.
Moreover, by the Cauchy--Schwarz inequality,
\[
|\widetilde\omega(a)|^2 \le |\omega|\latetilde(aa^*)|\omega|\latetilde(v^*v)
\le |\omega|\latetilde(aa^*).
\]
It now follows from the uniqueness of
absolute value (Proposition III.4.6 of \cite{takesaki:vol1})
that $|\widetilde\omega| =  |\omega|\latetilde$.

Since  $x\in C_0(\G)$ is a fixed point of $\lt_\omega$,
we have $\lt_{\widetilde\omega}(x) = x$.
Since  $\widetilde\omega$ is a contractive idempotent in $\M^u(\QG)$
it follows from Lemma~3.1 of \cite{NSSS} (see also  equation (2.5) of
\cite{kasprzak}) that 
\[
x x^* = \lt_{\widetilde\omega}(x)\lt_{\widetilde\omega}(x)^*
= \lt_{|\widetilde\omega|}(x\lt_{\widetilde\omega}(x)^*) =
\lt_{|\widetilde\omega|}(xx^*).
\]
Pick $\nu\in\M(\QG)_+$ such that $\nu(xx^*) = \|xx^*\|$.
Then $R_\nu(xx^*)\in C_0^u(\G)$ is a fixed point of $L_{|\wt{\omega}|}^u$
and $|\wt\omega|(R_\nu(xx^*)) = \nu(xx^*)$.
Put $e = R_\nu(xx^*)/\nu(xx^*)\in C_0^u(\G)$.
Since $|\wt\omega|$ is an idempotent state in $\M^u(\QG)$
and $e\in L_{|\wt\omega|}^u(C_0^u(\QG))$,
$e$ is in the multiplicative domain of $|\wt\omega|$
by Lemma~2.5 of \cite{salmi-skalski:idem}.

Since $|\widetilde\omega| = |\omega|\latetilde$,
we have for any $k\in \bn$ that
\[
\lt_{|\omega|^{\conv k}}(xx^*) = \lt_{|\omega|^{\conv k}}(\lt_{|\widetilde\omega|}(xx^*)) =
\lt_{|\widetilde\omega|\conv|\omega|^{\conv k}}(xx^*) = \lt_{|\widetilde\omega|}(xx^*) = xx^*
\]
and so $|\omega|^{\conv k}(e) = 1$. Hence
\[
1 = |\omega|^{\conv k}(e)^2 \le |\omega|^{\conv k}(e^2) \le 1
\]
(by Cauchy--Schwarz) and it follows that
$e$ is also in the multiplicative domain of $|\omega|^{\conv k}$ for every $k \in \bn$.
Now for every $a\in C_0^u(\G)$ and $k\in \bn$
\[
|\omega|^{\conv k}\bigl((ae - a)^*(ae - a)\bigr)
= |\omega|^{\conv k}(ea^*ae) - |\omega|^{\conv k}(a^*ae)
  -|\omega|^{\conv k}(ea^*a) + |\omega|^{\conv k}(a^*a) = 0
\]
because $e$ is in the multiplicative domain of $|\omega|^{\conv k}$.
Therefore $ae-a\in N_{|\omega|^{\conv k}}$.

(vi)$\implies$(i):
Fix a free ultrafilter $p$ and let $\wt\omega_p\in\ruc^u(\QG)^*$
be the weak* limit of $S_n(\omega)$ along $p$.
Since $L_\omega$ has a non-zero fixed point in $\linf(\G)$,
there is, by Lemma~\ref{lemma:cesaro-nilpotent}, 
$x\in\ruc^u(\QG)$ such that  $\wt\omega_p(x)\ne 0$.
Let $v\in C_0^u(\G)^{**}$ be as in Lemma~\ref{lemma:v-group-like}.
It follows from (vi) that
$|\omega|^{\conv k}.e = |\omega|^{\conv k}$ on $C_0^u(\G)^{**}$
for every $k\in \bn$. Therefore
\[
\wt\omega_p(ex) = \plim S_n(\omega)(ex)
= \plim S_n(|\omega|)(exv)
= \plim S_n(|\omega|)(xv)
= \plim S_n(\omega)(x)
= \wt\omega_p(x) \ne0.
\]
Since $ex\in C_0^u(\QG)$, (i) holds.
\end{proof}

The above theorem has an immediate corollary, which can be also proved directly.
\begin{cor}
If  $\omega \in \MUG_1$ and  $\omega^{\conv n}\to 0$ in the weak*
topology of $M^u(\G)$, then $\Fix L_{\omega} \cap C_0(\G) = \{0\}$. 
\end{cor}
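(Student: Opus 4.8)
The plan is to obtain this at once from Proposition~\ref{prop:more-equiv} and then to record the promised direct argument. For the quick route, I would observe that the desired conclusion $\Fix L_\omega \cap C_0(\G) = \{0\}$ is precisely the negation of condition~(iv) of that proposition, so by the equivalence (i)$\iff$(iv) it suffices to verify that $S_n(\omega) \to 0$ weak* on $C_0^u(\G)$. This is where the hypothesis enters: the assumption $\omega^{\conv n} \to 0$ weak* in $M^u(\G)$ says exactly that $\omega^{\conv n}(a) \to 0$ for every $a \in C_0^u(\G)$, and since the Ces\`aro means of a convergent numerical sequence share its limit, $S_n(\omega)(a) = \frac1n \sum_{k=1}^n \omega^{\conv k}(a) \to 0$ as well. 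Hence (i) fails, and therefore so does (iv), giving the claim.

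For the direct argument I would first record that $(L_\omega)^n = L_{\omega^{\conv n}}$, obtained by iterating the identity $L_{\rho \conv \eta} = L_\eta \circ L_\rho$ of Proposition~\ref{prop:collection}(vi) with $\rho = \eta = \omega$. Then, taking any $x \in \Fix L_\omega \cap C_0(\G)$ and any $\phi \in \lone(\G)$, the defining formula for these operators yields, for every $n$,
\[
\phi(x) = \phi\big((L_\omega)^n(x)\big) = \phi\big(L_{\omega^{\conv n}}(x)\big) = \omega^{\conv n}\big((\id \ot \phi)\Cop(x)\big).
\]
The point to check---and the only place where $x \in C_0(\G)$, rather than merely $x \in \linf(\G)$, is used---is that $(\id \ot \phi)\Cop(x)$ lies in $C_0^u(\G)$; this is immediate from part~(i) of the first Lemma of Section~1, since $x \in C_0(\G)$. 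The right-hand side then tends to $0$ by hypothesis, while the left-hand side is independent of $n$, forcing $\phi(x) = 0$ for all $\phi \in \lone(\G)$, i.e.\ $x = 0$.

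I do not expect a genuine obstacle here, as both arguments are very short. The one subtlety worth flagging is precisely the membership $(\id \ot \phi)\Cop(x) \in C_0^u(\G)$: it is what allows the hypothesis (weak* convergence of $\omega^{\conv n}$ on $C_0^u(\G)$) to be applied. For a fixed point living only in $\linf(\G)$ one would obtain $(\id \ot \phi)\Cop(x) \in \ruc^u(\G) \subset M(C_0^u(\G))$, about which the present hypothesis says nothing; this is consistent with Theorem~\ref{lemma:cesaro-nilpotent}, where control of $\linf(\G)$-fixed points requires convergence on all of $\ruc^u(\G)$ rather than merely on $C_0^u(\G)$.
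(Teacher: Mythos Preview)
Your proposal is correct and matches the paper exactly: the paper does not give a proof but simply remarks that the statement is an immediate corollary of Proposition~\ref{prop:more-equiv} and ``can be also proved directly,'' and you supply both the one-line deduction via (i)$\iff$(iv) and the direct computation the paper alludes to. The check that $(\id\ot\phi)\Cop(x)\in C_0^u(\G)$ for $x\in C_0(\G)$ is indeed the only point requiring care, and your appeal to the first lemma of Section~1 is the right justification.
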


In the non-degenerate case, Proposition~\ref{prop:more-equiv}
can be used to completely characterise the fixed points in $C_0(\G)$. Note that the second statement can be deduced from Theorem \ref{thm:group-like} and the analogous fact for $\omega$ being positive. 

\begin{cor} \label{cor:non-deg}
Suppose that $\omega\in \M^u(\QG)_1$ is
non-degenerate. If $\G$ is not compact,
then $\Fix L_\omega\cap C_0(\G) = \{0\}$.  
If $\G$ is compact, then $\Fix L_\omega$ is either $\{0\}$ or
$\C u$ where $u$ is a group-like unitary in $C(\G)$.
\end{cor}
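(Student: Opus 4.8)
\emph{Strategy.} The plan is to split into the non-compact and the compact case, in both instances reducing the contractive statement to the positive one. In the non-compact case I would derive a contradiction from the existence of any non-zero fixed point in $C_0(\G)$, exploiting that $C_0^u(\G)$ is non-unital. In the compact case I would apply Theorem~\ref{thm:group-like} to factor out a group-like unitary and then appeal to the positive (state) result.

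\emph{Non-compact case.} Suppose towards a contradiction that $\Fix L_\omega\cap C_0(\G)\neq\{0\}$; this is condition (iv) of Proposition~\ref{prop:more-equiv}. First I would invoke the implication (iv)$\implies$(vi) of that proposition to obtain $e\in C_0^u(\G)_+$ with $ae-a\in N_{|\omega|^{\conv k}}$ for all $k\in\bn$ and all $a\in C_0^u(\G)$, i.e.\ $|\omega|^{\conv k}\bigl((ae-a)^*(ae-a)\bigr)=0$ for every $k$. Since $\omega$ is non-degenerate, the positive element $(ae-a)^*(ae-a)$ is killed by every $|\omega|^{\conv k}$ and must therefore vanish, so $ae=a$ for all $a$; taking adjoints (recall $e=e^*$) gives $eb=b$ for all $b$ as well. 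Thus $e$ is a two-sided unit of $C_0^u(\G)$. But $\Lambda\colon C_0^u(\G)\to C_0(\G)$ is a surjective $*$-homomorphism, so $C_0^u(\G)$ is unital precisely when $C_0(\G)$ is, i.e.\ precisely when $\G$ is compact, contradicting our hypothesis. Hence $\Fix L_\omega\cap C_0(\G)=\{0\}$.

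\emph{Compact case.} Assume now $\G$ is compact, so that $C_0^u(\G)$ is unital and $M(C_0^u(\G))=C_0^u(\G)$. Combining this with the description $C_0^u(\G)=\clsp\{(\id\ot\phi)\Cop(a):\phi\in\lone(\G),\,a\in C_0(\G)\}$ and the inclusion $C_0(\G)\subseteq\linf(\G)$, one reads off $\ruc^u(\G)=\luc^u(\G)=C_0^u(\G)$; in particular $\G$ is universally SIN. If $\Fix L_\omega=\{0\}$ we are done, so assume $\Fix L_\omega\neq\{0\}$, whence $\|\omega\|=1$. By Theorem~\ref{lemma:cesaro-nilpotent} (the equivalence (i)$\iff$(iii)), $L_\omega^u$ has a non-zero fixed point in $\ruc^u(\G)=\luc^u(\G)$, so the hypotheses of Theorem~\ref{thm:group-like} hold. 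That theorem yields a group-like unitary $v\in C_0^u(\G)$, a group-like unitary $u=\Lambda(v)\in M(C_0(\G))=C(\G)$, and the identity $\Fix L_\omega=(\Fix L_{|\omega|})\,u^*$. Since $C(\G)$ is unital, $\||\omega|\|=|\omega|(1)=1$, so $|\omega|$ is a non-degenerate state, and it remains to show $\Fix L_{|\omega|}=\C 1$; this is the promised positive input. Granting it, $\Fix L_\omega=\C u^*$ with $u^*$ again a group-like unitary in $C(\G)$.

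\emph{The positive input and the main obstacle.} To see that $\Fix L_{|\omega|}=\C 1$, I would note that by Lemma~\ref{MGlimit} the Ces\`aro averages $S_n(|\omega|)$ converge weak$^*$ to an idempotent state $\widetilde{|\omega|}$, and by Lemma~\ref{lem:fix_L} that $\Fix L_{|\omega|}=\Fix L_{\widetilde{|\omega|}}$. The crux is to show that non-degeneracy forces $\widetilde{|\omega|}$ to be faithful, hence to coincide with the Haar state $h$; then $L_h(x)=(h\ot\id)\Cop(x)=h(x)1$ immediately gives $\Fix L_h=\C 1$. This faithfulness is the quantum analogue of the classical fact that a probability measure whose convolution powers eventually charge every non-empty open set has only constant harmonic functions on a compact group, and it is the one genuinely non-formal step: it can either be quoted from the positive theory (as in the proof of the preceding corollary, through \cite[Theorem 3.6]{knr}), or established by checking that the support projection of $\widetilde{|\omega|}$ dominates those of all $|\omega|^{\conv k}$, whose supremum is $1$ by non-degeneracy.
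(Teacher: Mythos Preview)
Your proof is correct and follows essentially the same approach as the paper: in the non-compact case you extract the element $e$ from Proposition~\ref{prop:more-equiv}(vi) and use non-degeneracy to force $e$ to be a unit, and in the compact case you combine Theorem~\ref{lemma:cesaro-nilpotent} with Theorem~\ref{thm:group-like} and then invoke \cite[Theorem 3.6]{knr} for the positive input. The only additions are your explicit verification that compact $\G$ is universally SIN and your alternative sketch (via faithfulness of $\widetilde{|\omega|}$) for the positive step, which the paper simply handles by direct citation.
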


\begin{proof}
For non-degenerate $\omega$, the element $e$ in statement (iv)
of Proposition~\ref{prop:more-equiv} is a positive right identity,
hence an identity. Therefore $\G$
is necessarily compact if $L_\omega$ has a non-zero fixed point in
$C_0(\G)$.

Now suppose that $\G$ is compact.
By Theorem~\ref{thm:group-like} and Lemma~\ref{lemma:cesaro-nilpotent},
if $\Fix L_\omega\ne\{0\}$ there is a group-like unitary  $u\in C(\G)$
such that $\Fix L_\omega = \Fix L_{|\omega|} u$. By \cite[Theorem 3.6]{knr},
$\Fix L_{|\omega|} = \C 1$.
\end{proof}

Finally we note that for discrete quantum groups and positive functionals the above corollary can be strengthened: in \cite[Theorem 2]{Mehrdad1} it is shown that if $\QG$ is discrete and infinite (in other words non-compact) and $\omega\in \PUG$ is non-degenerate then for every $x\in C_0(\QG)$ we have $L_{\omega^{\star n}}(x) \stackrel{n \to \infty}{\longrightarrow}0$.


\section{Fixed points in $L_p(\G)$ for tracial Haar weights}


Yau showed in \cite{yau} that any harmonic function $f \in L_p(M)$ on
a complete manifold $M$ is constant, for any $p \in ( 1 , \infty)$.
Motivated by this result, Chu \cite{chu} introduced and studied
the  space of $L_p$-fixed points of the convolution operator
$L_\omega$ for $p \in [ 1 , \infty )$. The main result of \cite{chu}
  states that if $\omega$ is an adapted probability measure, then any
  such fixed point must be a constant
  function,  see \cite[Theorem 3.12, Corollary 3.14]{chu}. 

A quantum group version of Chu's result has been obtained by Kalantar in \cite{kal-lp}. More precisely, let $\mathbb{G}$ be a locally compact quantum group 
with tracial (left) Haar weight, $p \in [ 1 , \infty )$, and $\omega \in P(\mathbb{G})$ a non-degenerate quantum probability measure. 
Consider the space of $\omega$-harmonic vectors in the non-commutative $L_p$-space $L_p(\G)$: 
$$H^p_\omega (\G) := \{ f \in L_p(\G) \mid L_\omega f = f \} .$$
If $\mathbb{G}$ is non-compact, then $H^p_\omega (\G) = \{0\}$ 
\cite[Theorems 2.4, 2.6]{kal-lp}. If $\mathbb{G}$ is compact, then $H^p_\omega (\G) = \mathbb{C} 1$ \cite[Theorem 2.8]{kal-lp}.

We will now generalise these results to our setting, extending the
context from $M(\mathbb{G})$ to $\MUG$ and allowing non-positive
quantum measures. We begin by outlining the construction of the action
of the convolution operators on the non-commutative $L_p$-spaces, with
$p \in [1,\infty)$.  Recall that we denote by $\varphi$ the tracial
  left Haar weight, and by $\psi = \varphi R$ the right Haar weight,
  where $R$ is the unitary antipode of $L^\infty(\G)$. We write  
$L_p(\G)$ and $\tilde{L}_p(\G)$ for the non-commutative $L_p$-spaces
  associated with $\varphi$ and $\psi$, respectively. Recall that
  these are defined as the completions of  
$\mathcal{M}_\varphi$ and $\mathcal{M}_\psi$ under the norms $\|x\|_p
  = \varphi (|x|^p)^{\frac{1}{p}}$ and
  $\|x\|_p = \psi (|x|^p)^{\frac{1}{p}}$, respectively; here,  
$$\mathcal{M}_\varphi := \mathrm{lin} \{ x \in L^\infty(\G)^+ \mid
  \varphi (x) < \infty \} .$$  
For $p,q \in ( 1 , \infty )$ with $\frac{1}{p} + \frac{1}{q} = 1$, the
spaces $L_p(\G)$ and $\tilde{L}_q(\G)$ are  
the duals of each other, via 
$$\langle a , b \rangle = \varphi (a R(b)) = \psi (R(a)b) \quad (a \in
L_p(\G), b \in \tilde{L}_q(\G)) ;$$  
cf.\ \cite[p.\ 3972]{kal-lp}. Recall that we denote by $L^1(\G)$ the
predual of $L^\infty(\G)$; the spaces $L^1(\G)$ and $L_1(\G)$ are
isometrically isomorphic via the map $\Phi: L_1(\G) \to L^1(\G)$ such
that for $x \in  \mathcal{M}_\varphi$ and $y \in L^\infty(\G)$ we have
$\Phi(x) (y)= \varphi(x y)$.

The convolution action of $M(\mathbb{G})$ on $L_p(\mathbb{G})$ is
defined by complex interpolation; for $\omega \in P(\mathbb{G})$ this
is discussed in  
in \cite[section 2]{kal-lp}, cf.\ also \cite[p.\ 19, in particular
  Lemma 4.3]{br-ru}. Below we will outline what is needed to extend
this to $\omega \in \MUG$. 

Begin by recalling the argument appearing in \cite{kal-lp}: if $\omega
\in M(\QG)_+$, then the convolution operator $L_\omega:L^\infty(\G)\to
L^\infty(\G)$ preserves the weight $\phi$ (so also the space
$\mathcal{M}_\varphi$).  Further Kalantar checks in \cite{kal-lp} that
we have the following equality: 
\begin{equation}
  \Phi \circ L_\omega(z) = (\omega \circ R)\star \Phi(z)
  \qquad (z \in \mathcal{M}_{\varphi}).
\label{unitantipode}
\end{equation} 
This implies that in fact $L_\omega$ yields (by continuous extension)
a bounded map on $L_1(\QG)$, and hence by complex interpolation on all the spaces
$L_p(\G)$. Further the above formula is true (simply by linearity and
continuity) for all $\omega \in M(\G)$ and $z \in L_1(\G)$. A similar
procedure shows that the right convolution operators $R_\omega$ act
boundedly on $\tilde{L}_q(\G)$. 

Further Kalantar shows in the proof of  \cite[Theorem 2.2]{kal-lp}
that the following is true for all $p\in (1, \infty), \omega \in
P(\G)$, $f \in L_p(\G)$ and $g \in \tilde{L}_{q}(\G)$:  
\begin{equation} \label{adjoint}
  \langle L_\omega (f), g \rangle = \langle f, R_\omega (g)\rangle.
\end{equation}
Before we extend these formulas to operators associated with $\omega
\in \MUG$, we need to recall that for every $p \in (1, \infty)$ we
have that each of the subspaces $L_1(\QG) \cap L_p(\QG)$ and
$L^\infty(\QG) \cap L_p(\QG)$ is dense in $L_p(\QG)$ (with similar
statements holding for $\tilde{L}_p(\QG)$). Finally note one more easy
property:   the span of $\{L_\nu(z): \nu
  \in L^1(\G), z\in \mathcal{M}_\varphi\}$ is dense in $L_1(\G)$ (and
  naturally the span of $\{R_\nu(z): \nu \in L^1(\G), z\in
  \mathcal{M}_\psi\}$ is dense in $\widetilde{L}_1(\G)$).  Indeed,
  it suffices to use the formula \eqref{unitantipode},
  the fact that $R$ is an isometry on $L^\infty(\G)$ and finally the
  fact that the linear span of $L^1(\G) \star L^1(\G)$ is dense in
  $L^1(\G)$.

\begin{lem}
Let $\omega \in \MUG$, $p \in [1,\infty)$. Then the operator
  $L_\omega:L^\infty(\G) \to L^\infty(\G)$ defines by
  restriction/continuous extension a bounded operator on $L_p(\G)$ (to
  be denoted by the same symbol). Further we have the following
  equalities: 
  \begin{equation} \label{form1}
    \Phi \circ L_\omega(z) = (\omega \circ  R^u)\star \Phi(z), \qquad
    z \in L_1(\G), 
\end{equation} 
\begin{equation}  \langle L_\omega (f), g \rangle = \langle f,
  R_\omega (g)\rangle, \qquad f \in L_p(\G), g \in
  \tilde{L}_{q}(\G). \label{form2} 
\end{equation} 
\end{lem}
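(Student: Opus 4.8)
The plan is to establish \eqref{form1} first, since it simultaneously yields the $p=1$ case of the boundedness assertion; the values $p\in(1,\infty)$ then follow by interpolation, and \eqref{form2} by a duality argument. Throughout one should keep in mind that, unless $\G$ is coamenable, $\Lambda$ is not injective, so the reduced functionals $\{\nu\circ\Lambda:\nu\in\MG\}$ are \emph{not} weak*-dense in $\MUG$; consequently one cannot simply approximate a universal $\omega$ by reduced measures and invoke \eqref{unitantipode}, and a genuinely universal argument is required.

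First I would fix $\omega\in\MUG$ and reduce \eqref{form1} to a pointwise statement. Since $\mathcal{M}_\varphi$ is dense in $L_1(\G)$ and $\Phi$ is an isometric isomorphism onto $L^1(\G)$, it suffices to prove, for $z\in\mathcal{M}_\varphi$, that $L_\omega(z)$ lies in $L_1(\G)$ and that $\Phi(L_\omega z)=(\omega\circ R_u)\star\Phi(z)$ in $L^1(\G)$. Testing both sides against an arbitrary $y\in L^\infty(\G)$ and unravelling the definitions of $\Phi$ and of the module action $\MUG\star L^1(\G)$ (recall $(\mu\star\phi)(y)=\phi(L_\mu(y))$), this is equivalent to the trace identity
\[
\varphi\big(L_\omega(z)\,y\big)=\varphi\big(z\,L_{\omega\circ R_u}(y)\big),\qquad y\in L^\infty(\G),\ z\in\mathcal{M}_\varphi.
\]

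The main obstacle is this identity, the universal and tracial counterpart of the computation underlying \eqref{unitantipode} in \cite{kal-lp}. I would deduce it from a half-universal strong left invariance identity: writing $L_\omega(z)=(\omega\ot\id)\Cop(z)$ and pulling the universal antipode out of the first leg (using $R_u(1)=1$), the identity for all $\omega$ is equivalent to
\[
(\id\ot\varphi)\big(\Cop(z)(1\ot y)\big)=R_u\Big((\id\ot\varphi)\big(\Cop(y)(1\ot z)\big)\Big)
\]
in $C_0^u(\G)$, where the traciality of $\varphi$ is used to move $z$ past a factor inside the second leg. This is the lift to the half-universal comultiplication $\Cop$ of the ordinary strong left invariance of $\varphi$. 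To prove it I would use the leg-wise compatibility $(\Lambda\ot\id)\Cop=\cop$ together with $\Lambda\circ R_u=R\circ\Lambda$: applying $\Lambda$ to both sides reduces the equality to the reduced strong left invariance, while the universal manageability of $\WW$ recorded in \cite{ku} (the same input that yields \eqref{convantipode}) is what guarantees that the identity already holds at the level of $C_0^u(\G)$, and not merely after reduction. Indeed \eqref{convantipode} is precisely the functional-level shadow of this identity and can be used to organise the bookkeeping of $R$ versus $R_u$.

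Granting the trace identity, boundedness on $L_1(\G)$ is immediate: the right-hand side of \eqref{form1} is $\Phi^{-1}\circ M\circ\Phi$, where $M(\phi)=(\omega\circ R_u)\star\phi$ is the convolution action of $\omega\circ R_u\in\MUG$ on $L^1(\G)$; since this action satisfies $\|(\omega\circ R_u)\star\phi\|\le\|\omega\|\,\|\phi\|$ and $R_u$ is isometric, we obtain a bounded operator on $L_1(\G)$ of norm at most $\|\omega\|$ that agrees with $L_\omega$ on the dense subspace $\mathcal{M}_\varphi$. As $L_\omega$ is also bounded on $L^\infty(\G)$ with norm at most $\|\omega\|$, and the two actions are consistent on the dense subspace $L_1(\G)\cap L^\infty(\G)$, complex interpolation between $L_1(\G)$ and $L^\infty(\G)$ yields a bounded operator $L_\omega$ on each $L_p(\G)$, $p\in(1,\infty)$, again of norm at most $\|\omega\|$. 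Finally, for \eqref{form2} I would run the same programme for the right convolution operators $R_\omega$ on the spaces $\tilde L_q(\G)$ (as indicated before the lemma, these act boundedly via the analogue of \eqref{form1} built from $\psi=\varphi R$), obtaining the dual trace identity; pairing through $\langle a,b\rangle=\varphi(aR(b))=\psi(R(a)b)$ then gives $\langle L_\omega f,g\rangle=\langle f,R_\omega g\rangle$ first for $f\in L_1(\G)\cap L^\infty(\G)$ and $g\in\tilde L_1(\G)\cap L^\infty(\G)$, and the general case $f\in L_p(\G)$, $g\in\tilde L_q(\G)$ follows by density and the boundedness just established.
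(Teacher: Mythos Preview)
Your strategy diverges from the paper's and has a gap at the key step. You aim to prove \eqref{form1} by establishing a half-universal strong left invariance identity
\[
(\id\ot\varphi)\big(\Cop(z)(1\ot y)\big)=R_u\Big((\id\ot\varphi)\big(\Cop(y)(1\ot z)\big)\Big)
\]
in $M(C_0^u(\G))$, then pairing against $\omega\in\MUG$. The paper instead bootstraps from the already-known reduced formula \eqref{unitantipode}: it uses the density in $L_1(\G)$ of elements $z=L_\nu(z')$ with $\nu\in L^1(\G)$ and $z'\in\mathcal{M}_\varphi$, together with the ideal property $L^1(\G)\conv M^u(\G)\subset L^1(\G)$. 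Since $L_\omega(z)=L_{\nu\conv\omega}(z')$ and $\nu\conv\omega\in L^1(\G)\subset M(\G)$, one may apply \eqref{unitantipode} to $\nu\conv\omega$ and then \eqref{convantipode} to split off $\omega\circ R_u$. No new universal-level operator identity is required. The paper handles \eqref{form2} by the same absorption trick, writing $g=R_\nu(g')$ (and, for $p=1$, $f=L_\nu(f')$) to reduce to the reduced relation \eqref{adjoint}.

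The gap in your argument is the justification of the displayed half-universal identity. You correctly flag at the outset that $\Lambda$ is not injective, but your proof then amounts to: apply $\Lambda$ to reduce to the known reduced strong left invariance, and invoke ``universal manageability of $\WW$'' to assert that the identity already holds before reduction. That is precisely the non-trivial step, and the appeal to manageability is not a proof. The formula \eqref{convantipode} you cite as the ``functional-level shadow'' is derived from $\chi(R_u\ot R_u)\circ\COP=\COP\circ R_u$, an identity for the \emph{fully} universal coproduct $\COP$; your displayed equation involves the half-universal $\Cop$ and the Haar weight, and does not follow from \eqref{convantipode} without further work. Making your route rigorous would require either a direct computation with $\Ww$ and its antipodal behaviour (not recorded in \cite{ku} in this form), or an argument that both sides lie in a subspace of $M(C_0^u(\G))$ on which $\Lambda$ is injective---neither of which you supply. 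The paper's density-and-absorption method is designed exactly to avoid this obstacle.
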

\begin{proof}
	We follow the line of argument in \cite{kal-lp}. Note we can
        (and do) assume that $\omega \in \PUG$ and then argue by
        linearity.  
The fact that $L_\omega$ preserves the left Haar weight is Lemma 3.4
of \cite{knr}. In the next step we show that the formula
\eqref{unitantipode} holds for $z$ in a dense subset of
$L_1(\QG)$. Indeed, take $z=L_\nu (z')$, where
$z' \in \mathcal{M}_\varphi$ and $\nu \in \lone(\G)$. Then we have 
\begin{align*}
  \Phi \circ L_\omega(z) &= \Phi \circ L_\omega(L_\nu (z'))
  = \Phi \circ L_{\nu \star\omega} (z')
  = ((\nu \star \omega)\circ R) \star \Phi(z')
  = ((\omega \circ R_u)\star (\nu \circ R))\star \Phi(z') \\
  &= (\omega \circ R_u)\star ((\nu \circ R)\star \Phi(z'))
  = (\omega \circ R_u)\star \Phi \circ L_{\nu}(z')
  = (\omega \circ R_u)\star \Phi(z)
\end{align*}
where in the third and the sixth equalities we used
\eqref{unitantipode} and in the fourth one \eqref{convantipode}. 
Now, as the span of $L_\nu (z')$ with $z' \in \mathcal{M}_\varphi$ and
$\nu \in \lone(\G)$ is dense in $L_1(\QG)$, we deduce that $L_\omega$
restricts/extends to a bounded map on the whole $L_1(\QG)$ and further
\eqref{form1} holds for all $x \in L_1(\QG)$.  
Now that we have established that $L_\omega$ is bounded on $L_1(\QG)$
(as well as on $\linf(\QG)$), it follows by complex interpolation that
$L_\omega$ is bounded also on $L_p(\QG)$ for $p\in(1, \infty)$.
Similarly, $R_\omega$ is bounded on all $\tilde L _q(\QG)$.

Consider now the second statement and assume first that $p=1$. Suppose
that $f=L_\nu(f')$ for some $\nu \in L^1(\QG)$ and $f'\in L_{1}(\G)$
and $g \in \tilde{L}^{\infty}(\G)$. We compute: 
\begin{align*}
  \langle L_\omega (f), g \rangle
  &= \langle L_\omega (L_\nu(f')), g \rangle
  = \langle L_{\nu \star\omega} (f'), g \rangle
  = \langle f', R_{\nu \star\omega}(g) \rangle\\
  &= \langle f', R_{\nu} (R_\omega(g)) \rangle
  = \langle L_{\nu}(f'),  R_\omega(g) \rangle
  = \langle f, R_\omega(g) \rangle, 
\end{align*} 
where in the third and fifth equalities we used \eqref{adjoint}. By
continuity, \eqref{form2} follows (for $p=1$).

Let then $p \in (1, \infty)$. As each duality is given by computing
the trace on the respective (strong) products of measurable operators,
we have by the above  
\[
\langle L_\omega (f), g \rangle = \langle f,
R_\omega (g)\rangle
\] 
for all  $f \in L_p(\G) \cap L_1(\QG)$,
$g \in \tilde{L}_{q}(\G) \cap L^\infty(\QG)$. Then the general
statement follows by first approximating a general $f \in L_p(\QG)$ by
$f_i \in L_p(\G) \cap L_1(\QG)$ and then a general
$g \in \tilde{L}_q(\QG)$ by $g_i \in \tilde{L}_{q}(\G)\cap L^\infty(\QG)$.
This uses the continuity of the  operators $L_\omega$ and $R_\omega$
on respectively $L_p(\QG)$ and $\tilde{L}_q(\QG)$.
\end{proof}

We are ready for the main result of this section.

\begin{tw} \label{prop:Lpharmonic}
Let $\mathbb{G}$ be a locally compact quantum group with tracial
(left) Haar weight, let $p \in [ 1 , \infty )$, and let $\omega \in
  M(\mathbb{G})_1$ be non-degenerate.  
\begin{itemize} 
		\item[(i)] Assume $\G$ is non-compact. Then
                  $H^p_\omega (\G) = \{0\}$.  
		\item[(ii)] Assume $\G$ is compact and co-amenable. If
                  $L_\omega$ has a non-zero fixed point in $L_p(\G)$,
                  then $H^p_\omega (\G) = \mathbb{C} u$ where $u$ is a
                  group-like unitary in $C(\G)$.  
	\end{itemize} 
\end{tw}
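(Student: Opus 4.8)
The plan is to reduce the contractive case to Kalantar's positive case \cite{kal-lp} by factoring out a group-like unitary, exactly as in Theorem~\ref{thm:group-like} and Corollary~\ref{cor:non-deg}, but now on the level of $L_p(\G)$; traciality of the Haar weight is what makes this reduction isometric. First I would dispose of the trivial case: since $L_\omega$ is a contraction on each $L_p(\G)$, if $\|\omega\|<1$ it is a strict contraction and $H^p_\omega(\G)=\{0\}$, so in both parts we may assume $\|\omega\|=1$. Then $|\omega|$ is a non-degenerate \emph{state} in $M(\G)$, i.e.\ a non-degenerate quantum probability measure, to which the results of \cite{kal-lp} apply.

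The heart of the matter is to show that a non-zero $f\in H^p_\omega(\G)$ forces the Ces\`aro idempotent $\wt\omega$ to be \emph{non-zero on} $C_0^u(\G)$; by Lemma~\ref{MGlimit} and Lemma~\ref{idempalternative} this makes $\wt\omega$ a non-zero contractive idempotent in $M^u(\G)$, and by Proposition~\ref{prop:more-equiv} it yields a non-zero fixed point of $L_\omega$ in $C_0(\G)$. Since $L_{\omega^{\conv k}}=L_\omega^k$ on $L_p(\G)$ (Proposition~\ref{prop:collection}), any such $f$ satisfies $L_{S_n(\omega)}f=f$ for every $n$. Rewriting this through the duality \eqref{form2} as $\langle f,R_{S_n(\omega)}g\rangle=\langle f,g\rangle$ for all $g\in\tilde L_q(\G)$, I would test against the dense family $g=R_\nu(z)$ (with $\nu\in L^1(\G)$, $z\in\mathcal{M}_\psi$) recalled before the $L_p$-action lemma, use $R_{S_n(\omega)}R_\nu=R_{S_n(\omega)\conv\nu}$ (Proposition~\ref{prop:collection}(iv)) together with the weak$^*$-continuity of $\rho\mapsto\rho\conv\nu$, and pass to the limit along a free ultrafilter: were $\wt\omega$ to vanish on $C_0^u(\G)$, the limit would be $0$, giving $\langle f,g\rangle=0$ on a dense set and hence $f=0$. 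Making this rigorous — transporting the weak$^*$ convergence of $S_n(\omega)$ through the convolution action on $L_p(\G)$ and keeping track of the $M^u(\G)$ versus $C_0^u(\G)^\perp$ parts via Corollary~\ref{cor:decomp} — is the step I expect to be the main obstacle.

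Once this is in hand, Corollary~\ref{cor:non-deg} finishes part~(i): for non-degenerate $\omega$ with $\G$ non-compact one has $\Fix L_\omega\cap C_0(\G)=\{0\}$, contradicting the previous paragraph, so $H^p_\omega(\G)=\{0\}$. For part~(ii), $\G$ compact and co-amenable (so that $C_0^u(\G)=C(\G)$, $M^u(\G)=M(\G)$, and the group-like unitary lives in the reduced algebra) together with the non-zero $C_0(\G)$-fixed point lets us invoke Theorem~\ref{thm:group-like}: there is a group-like unitary $u\in C(\G)$ with $\cop(u)=u\ot u$ and $\omega=u.|\omega|$.

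The last step is the $L_p$-intertwining. Because the Haar weight is tracial, right multiplication $x\mapsto xu^*$ is an isometric bijection of $L_p(\G)$. Using $\cop(u)=u\ot u$, $u^*u=1$ and $\omega=u.|\omega|$ one checks, first on $L^\infty(\G)\cap L_p(\G)$ and then by continuity on all of $L_p(\G)$, that
\[
L_\omega(yu^*)=L_{|\omega|}(y)\,u^*,
\]
so that $y\mapsto yu^*$ carries $H^p_{|\omega|}(\G)$ bijectively onto $H^p_\omega(\G)$. Since $|\omega|$ is a non-degenerate probability measure on the compact $\G$, \cite[Theorem 2.8]{kal-lp} gives $H^p_{|\omega|}(\G)=\C 1$, and therefore $H^p_\omega(\G)=\C u^*$; as $u^*$ is again a group-like unitary in $C(\G)$, this is the claimed conclusion.
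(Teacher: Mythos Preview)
Your outline for part~(i) is the paper's route: reduce a putative non-zero $f\in H^p_\omega(\G)$ to the statement $S_n(\omega)\not\to 0$ weak$^*$ on $C_0^u(\G)$, and then invoke Proposition~\ref{prop:more-equiv} and Corollary~\ref{cor:non-deg}. The gap you correctly flag as the ``main obstacle'' is real, and the resolution is \emph{not} via the $\ell^1$-decomposition of Corollary~\ref{cor:decomp}. What the paper does instead is introduce, for $p>1$, the matrix coefficient $\Omega_{f,g'}\in L^\infty(\G)$ determined by $\langle \Omega_{f,g'},h\rangle=\langle L_h f,g'\rangle$ for $h\in L^1(\G)$, and show (using Kalantar's argument together with a density result of Masuda) that $\Omega_{f,g'}\in C_0(\G)$. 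Your pairing then becomes $S_n(\omega)$ evaluated at the fixed element $(\id\ot\nu)\cop(\Omega_{f,g'})$, so the weak$^*$ convergence of $S_n(\omega)$ to $0$ (established independently from non-compactness via Proposition~\ref{prop:more-equiv} and Corollary~\ref{cor:non-deg}) kills it. The case $p=1$ is handled separately, directly through formula~\eqref{form1}, which expresses $\Phi(f)$ on $C_0(\G)$ as $S_n(\omega)$ applied to an element of $C_0^u(\G)$; you should not expect a single argument to cover both ranges.

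For part~(ii) your approach genuinely differs from the paper's. The paper does not reduce to Kalantar's positive case at all: it uses co-amenability to obtain a bounded approximate identity in $L^1(\G)$, and with it approximates $f$ in $L_1$-norm by elements $f_n=L_{\Phi(f)\circ R}(x_n)\in\ruc(\G)=C(\G)$ which are automatically $L_\omega$-fixed; Corollary~\ref{cor:non-deg} then gives $f_n\in\C u$, hence $f\in\C u$. Your intertwining $L_\omega(yu^*)=L_{|\omega|}(y)\,u^*$ together with \cite[Theorem~2.8]{kal-lp} is a clean alternative and the traciality argument for the isometry of right multiplication by $u^*$ on $L_p(\G)$ is correct. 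The catch is that you still need the group-like unitary $u$, hence a non-zero $C(\G)$-fixed point, hence the resolution of the very ``main obstacle'' above; the paper's BAI argument produces that $C(\G)$-fixed point directly, which is exactly where co-amenability enters. So your route trades the approximation step for a dependence on the part~(i) machinery, while the paper's route for (ii) is self-contained once one has Corollary~\ref{cor:non-deg}.
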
 
\begin{proof} 
	(i) Case 1: $p > 1$. Let $f \in H^p_\omega (\G)$, and $g \in
  \tilde{L}_q(\G)$, $g= R_\nu(g')$ for some $\nu \in L^1(\QG)$ and
  $g' \in \tilde{L}_{q}(\G)$. Then the element $\Omega_{f,g'} \in
  L_\infty(\G)$ defined through
  \[
  \langle \Omega_{f,g} , h \rangle = \langle L_h(f) , g' \rangle \quad
  (h \in L^1(\G))
  \] 
	belongs to $C_0(\G)$; cf.\ \cite[proof of Theorem 2.4]{kal-lp} (note that the latter argument assumes $p \in (1,2]$, but it can be modified to work for all $p>1$ by using the density result obtained in 
	\cite[Theorem 8]{masuda} to approximate $f$ and $g$). Note also that $S_n(\omega) \to 0$ ($w^*$) since otherwise, by 
	our Proposition \ref{prop:more-equiv}, $L_\omega$ would have a non-zero fixed point in $C_0(\G)$, 
	which would contradict the non-compactness of $\G$, in view of our Corollary \ref{cor:non-deg}. Thus we obtain, using twice \eqref{form2}: 
	\begin{align*}\langle f , g \rangle &= \langle L_{S_n(\omega)} f , g \rangle = \langle f , R_{S_n(\omega)} (g) \rangle =\langle f , R_{S_n(\omega)} (R_\nu(g')) \rangle = \langle f , R_{S_n(\omega) \star \nu} (g') \rangle = 
	\langle L_{S_n(\omega) \star \nu} (f) ,  g' \rangle \\&= (S_n(\omega) \star \nu) (\Omega_{f,g'}) = S_n(\omega) ((\id \ot \nu)\cop(\Omega_{f,g'}))
	 \stackrel{n \to \infty}{\longrightarrow} 0\end{align*}
	whence $f=0$ (since $g$ was an arbitrary element of a dense subset of $\tilde{L}_q(\G)$).

Case 2: $p=1$. Let $f \in H^1_\omega (\G)$. Note that, as in 
Case 1, we have that $S_n(\omega) \to 0$ ($w^*$). By
\eqref{form1} we have for any $n \in \bn$ the following
equality: 
\[
\Phi (f) = \Phi \circ L_{S_n(\omega)}(f)
= (S_n(\omega) \circ R_u) \star \Phi(f)
\]	
Thus for all $a \in C_0(\G)$: 
\begin{align*}
  \Phi(f)(a) &= \bigl((S_n(\omega) \circ R_u) \star \Phi(f)\bigr)(a)
  = (S_n(\omega) \circ R_u) ((\id \ot \Phi(f))\cop(a)) \\
  &= S_n(\omega) \left( R_u ((\id \ot \Phi(f))\cop(a))\right)
  \stackrel{n \to \infty}{\longrightarrow} 0
\end{align*}
so that $\Phi(f) = 0$ and of course $f=0$.

(ii) Since $\G$ is compact, we have
$L^\infty(\G) \subseteq L_p(\G) \subseteq L_1(\G)$.
Let $f \in H^p_\omega (\G) \setminus \{0\}$.
As $\G$ is co-amenable, we have a bounded approximate identity
$(e_\lambda)_{\lambda \in \Lambda}$ in $L^1(\G)$. For all $n \in
\mathbb{N}$, choose $e_{\lambda_n}$ such that
$\|  \Phi(f) \star  e_{\lambda_n}  - \Phi(f) \|_1 < \frac{1}{n}$.
Since  	$L^\infty(\G)$ is dense in $L_1(\G)$, we can further find $x_n \in L^\infty(\G)$ with $\|\Phi( x_n) - e_{\lambda_n} \|_1 < \frac{1}{n \| f \|_1}$. Hence, for all $n \in \mathbb{N}$ we obtain: 
	$$\|  \Phi(f) \star \Phi(x_n)  - \Phi(f) \|_1 \leq \| \Phi(f) \star \Phi(x_n) -  \Phi(f) \star  e_{\lambda_n}  \|_1 + \|  \Phi(f) \star  e_{\lambda_n}  - \Phi(f) \|_1 < \frac{2}{n} ,$$ 
	so $f_n := \Phi^{-1} (\Phi(f) \star \Phi(x_n)) \stackrel{n \to \infty}{\longrightarrow} f$ in $L_1(\G)$. 
	We can now exploit the formula \eqref{unitantipode} to see that 
	\[ \Phi(f)\star \Phi(x_n) = ((\Phi(f)\circ R)\circ R)\star \Phi(x_n) = \Phi (L_{\Phi(f)\circ R} (x_n)).\]
Thus $f_n=L_{\Phi(f)\circ R} (x_n) \in  L^1(\G) \star L^\infty(\G)  = RUC(\G) = C(\G)$. Finally 
\[ L_\omega \circ L_{\Phi(f)\circ R} (x_n) = L_{(\Phi(f)\circ R) \star \omega}(x_n)\]
and using the \eqref{unitantipode}  once again we check that 
\[ (\Phi(f)\circ R) \star \omega = ((\omega \circ R) \star \Phi(f)) \circ R = \Phi (L_\omega(f)) \circ R =  
	\Phi (f) \circ R.\]	
As $f_n \to f \not= 0$, we have $f_n \not= 0$ for $n$ large enough,
	 so that $L_\omega$ has a non-zero fixed point 
	in $C(\G)$. Thus, our Corollary \ref{cor:non-deg} implies, since $\G$ is compact, that $\mathrm{Fix}\, L_\omega = \mathbb{C} u$, where $u$ is a group-like unitary in $C(\G)$. 
	So, $f_n \in \mathbb{C} u$ for $n$ large enough, whence $f \in \mathbb{C} u$. Thus, we have $H^p_\omega (\G) \subseteq \mathbb{C} u$. The reverse inclusion is clear as 
	$u \in \mathrm{Fix}\, L_\omega \subseteq L^\infty(\G) \subseteq L^p(\G)$. 
\end{proof}


\section{Classical case}


In this section we discuss the classical case, i.e.\ the situation where $G$ is a locally compact group.

Given a probability measure 
$\omega\in \Prob(G)$, we let $S_\omega$ and $G_\omega$ denote,
respectively, the closed semigroup 
and group generated by $\supp\omega$.  We say that
$\omega$ is {\it non-degenerate} (or {\it irreducible}) if $S_\omega=G$, and
{\it adapted} if $G_\omega=G$; so that non-degeneracy implies
adaptedness, but the converse implication does not hold. Given a general  $\omega \in \M(G)$ we will call it non-degenerate (respectively, adapted) if $|\omega|$ is non-degenerate (respectively, adapted).  It is easy to
see that this notion of non-degeneracy coincides with the one
introduced earlier. 
In the non-degenerate case, Theorem~\ref{thm:group-like} says
that if $\lt_\omega$ has a non-zero fixed point in $\luc(G)$,
then there is $\chi\in \dual G$ such that
$\omega = \chi|\omega|$ and
$\Fix \lt_\omega = (\Fix \lt_{|\omega|}) \conj{\chi}$.
The following theorem addresses this result in the degenerate situation.

\begin{theorem} \label{thm:fixed-in-luc}
	Suppose that $G$ is a locally compact group and that
	$\omega\in\M(G)_1$.
	If $\lt_\omega$ has a non-zero fixed point in $\luc(G)$,
	then there is a continuous character $\chi:S_{|\omega|}\to\bt$ such that
	$\omega = \chi |\omega|$.
\end{theorem}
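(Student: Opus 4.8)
The plan is to extract from the hypothesis a genuinely \emph{continuous} function $v$ on $G$ that is multiplicative almost everywhere with respect to the convolution powers of $|\omega|$, and then to use continuity to convert this into an honest continuous character on the support semigroup $S_{|\omega|}$. The point where the $\luc(G)$-hypothesis is decisive is precisely in securing continuity of $v$; everything after that is a density-and-continuity bookkeeping argument.

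First I would note that a non-zero fixed point forces $\|\omega\| = 1$, so $\||\omega|\| = 1$ and $|\omega|$ is a probability measure; hence $S_{|\omega|}$ is the closed subsemigroup generated by $\supp|\omega|$, with $\supp|\omega|^{\conv n} = \overline{(\supp|\omega|)^{n}}$ and $S_{|\omega|} = \overline{\bigcup_{n\ge 1}(\supp|\omega|)^{n}}$. Since the fixed point lies in $\luc(G)$, I would run the construction from the proof of Theorem~\ref{thm:group-like} rather than merely invoking Lemma~\ref{lemma:v-group-like}: normalising the fixed point $x\in\luc(G)$ to $\|x\|=1$ and choosing $\mu\in\luc(G)^*$ with $\|\mu\|=1$ and $\mu(x)=1$, I set $v=R_\mu^u(x)^*$. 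As $R_\mu^u$ preserves $\luc^u(G)$, which for a classical group coincides with $\luc(G)=\textup{LUC}(G)$, this $v$ is a bona fide bounded continuous function on $G$ with $\|v\|=1$, and one checks as in Theorem~\ref{thm:group-like} that $\omega(v^*)=1$ and that $v^*$ is fixed by $\wt{L_\omega^u}$. Applying Lemma~\ref{lemma:group-like} then yields $\omega = v.|\omega|$ together with $\wt\COP^{(k)}(v)-v^{\ot(k+1)}\in N_{|\omega|^{\ot(k+1)}}$ for every $k$.

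Next I would translate these relations into classical measure theory. The identity $\omega = v.|\omega|$ says $\int a\,v\,d|\omega| = \int a\,d\omega$ for all $a$, so $v$ agrees $|\omega|$-almost everywhere with the Radon--Nikodym density $h=d\omega/d|\omega|$ of the polar decomposition; in particular $|v|=1$ holds $|\omega|$-a.e. Membership in $N_{|\omega|^{\ot(k+1)}}$ means exactly that the continuous function $(s_1,\dots,s_{k+1})\mapsto v(s_1\cdots s_{k+1})-v(s_1)\cdots v(s_{k+1})$ vanishes $|\omega|^{\ot(k+1)}$-almost everywhere on $G^{k+1}$. Here continuity does the essential work: a set of full measure meets the support in a dense subset, and a continuous function vanishing there vanishes on the whole support, which for the product measure is $(\supp|\omega|)^{k+1}$. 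Thus the relation holds \emph{identically} on $(\supp|\omega|)^{k+1}$; for $k=0$ this gives $|v|=1$ everywhere on $\supp|\omega|$, and for general $k$ it gives $v(s_1\cdots s_{k+1})=\prod_i v(s_i)$ for all $s_1,\dots,s_{k+1}\in\supp|\omega|$.

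It then remains to set $\chi:=v|_{S_{|\omega|}}$ and verify it is a continuous character. Continuity is immediate. Using the everywhere-multiplicativity on finite products of support elements, $v$ on such a product is a product of modulus-one numbers, so $|v|=1$ on $\bigcup_n(\supp|\omega|)^n$ and, by continuity, on its closure $S_{|\omega|}$; hence $\chi$ is $\bt$-valued. For the character relation I would take $s,t\in S_{|\omega|}$, approximate each by finite products of support elements, apply the identical relations to the concatenated products, and pass to the limit using joint continuity of multiplication and of $v$, obtaining $\chi(st)=\chi(s)\chi(t)$. Finally $v=h$ $|\omega|$-a.e.\ together with $\supp|\omega|\subseteq S_{|\omega|}$ gives $\omega=\chi|\omega|$. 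The main obstacle is thus the very first structural input — obtaining a version of the group-like element that is an actual continuous function rather than an element of $C_0^u(\G)^{**}$; once the $\luc(G)$-hypothesis is leveraged to guarantee continuity, the passage from almost-everywhere to everywhere on the support and the extension of multiplicativity across $S_{|\omega|}$ are routine.
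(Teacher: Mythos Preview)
Your proposal is correct and follows essentially the same approach as the paper. Both proofs produce a continuous function $v$ on $G$ by pushing the $\luc(G)$-fixed point through a norm-one functional on $\luc(G)$: you phrase this via the abstract operator $R_\mu^u$ and then invoke Lemma~\ref{lemma:group-like}, whereas the paper realises $\mu$ concretely as evaluation at a point $x$ of the LUC-compactification $G^{\luc}$ where $|\tilde f|$ attains its maximum, and then derives the almost-everywhere multiplicativity by a direct integral argument (using that the integrand has modulus at most~$1$). The remaining passage from ``$|\omega|^{\otimes n}$-a.e.'' to ``everywhere on $(\supp|\omega|)^n$'' by continuity, and then to $S_{|\omega|}$ by density, is identical in both arguments; your write-up is simply the specialisation of the Section~2 machinery to the commutative case, while the paper unwinds that machinery explicitly.
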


\begin{proof}
	Let  $f\in\luc(G)$ be a non-zero fixed point of
	$\lt_\omega$.  Let $\widetilde f$ denote the extension of $f$ to the
	LUC-compactification $G^\luc$. Then there exists $x\in G^\luc$ such that 
	$\|f\| =|\widetilde{f}(x)|$.  Multiplying $f$ by a scalar we may and shall assume that
	$1=\widetilde{f}(x)=\|f\|$.  
	
	Write $\omega = u |\omega|$ where
	$u$ is a unimodular, measurable function.
	With $x$ as above, which can be viewed also as an element of $\luc(G)^*$, we define a function $x\odot f $ by 
	\[
	(x\odot f) (s) = \la x, \ell_s f \ra = \widetilde{f}(sx) \qquad(s\in G)
	\]
	where $sx$ is defined using the multiplication on $G^\luc$. 	Then $x\odot f\in \luc(G)$ and
	for any $n \in\bn$, if we choose a net $(t_i)_{i \in \Ind}$ of elements of $G$ convergent to $x$ inside $G^\luc$, we have
	\[
	\la \omega^{\conv n}, x\odot f \ra = \la \omega^{\conv n}\cdot x, f \ra
	= \lim_{i \in \Ind} \la \omega^{\conv n} \cdot t_i, f \ra =  \lim_{i \in \Ind} \la \omega^{\conv n}, r_{t_i} f \ra 
	= \lim_{i\in \Ind} (\lt_\omega^n f)(t_i)
	= \la x, \lt_\omega^n f \ra = \la x, f\ra,
	\]
	where we first used the definition of the product in $\luc(G)^*$ and then the fact that $\omega^{\star n}$, as a measure, belongs to the topological centre of $\luc(G)^*$ (see for example \cite{Wong}).
	Hence letting $S=\supp|\omega|$ we have that
	\[
	\widetilde{f}(x) = \int_{S^{\times n}} \widetilde{f}(s_1\dots s_nx) u(s_1)\dots u(s_n) \, d|\omega|^{\times n}(s_1,\dots,s_n).
	\]
	Since $|\widetilde f|$ attains its maximum at $x$, and $\widetilde{f}(x)=1$, 
	\[
	\widetilde{f}(s_1\dots s_nx) u(s_1)\dots u(s_n) = \widetilde{f}(x) =1
	\]
	for $|\omega|^{\times n}$ almost every $(s_1,\dots,s_n)$.  Now
	\[
	\conj{\chi(s)} =\widetilde{f}(sx)
	\]
	defines a bounded continuous function $\chi$ on $G$.
	Taking $n=1$ above we see that
	\[
	\chi(s) = u(s)\qquad  \text{for }|\omega|\text{-almost every }s,\text{ and hence }\chi(S)\subseteq\bt.
	\]
	For arbitrary $n$ we then see that
	\[
	\chi(s_1\dots s_n)=u(s_1)\dots u(s_n)\quad \text{for }|\omega|^{\times n}\text{-almost every }(s_1,\dots,s_n)
	\]
	while the $n=1$ case implies that
	$u(s_1)\dots u(s_n)=\chi(s_1)\dots \chi(s_n)$ for $|\omega|^{\times n}$-almost every $(s_1,\dots,s_n)$.
	Since $\chi$ is continuous we conclude that
	\[
\chi(s_1\dots s_n)=\chi(s_1)\dots \chi(s_n)\quad\text{for every } n \in \bn \text{ and } (s_1,\dots,s_n)\in S^{\times n}.
	\]
	We let $S^n\subseteq G$ denote the set of products from $S^{\times n}$.  An obvious
	induction shows that $\chi$ is multiplicative on the semigroup $S_{|\omega|}'=\bigcup_{n=1}^\infty S^n$, and
hence, by continuity, $\chi$ is multiplicative on its closure $S_{|\omega|}$.
\end{proof}

Notice  that if $|\omega|$ in the last theorem is non-degenerate then
the character $\chi$ with the desired properties is defined on the
whole of $G$.  In such a situation, if $f\in\Fix \lt_{|\omega|}$, then
it is clear that $f\conj{\chi}\in \Fix \lt_\omega$. 
In other cases there is no evident analogue of this fact.  
For any closed subsemigroup $S$ of $G$ and continuous character
$\chi:S\to\bt$ we let 
\begin{equation} \label{def:CDfp}
\luc_{\chi,S}(G)=\{f\in\luc(G):f(st)=\conj{\chi(s)}f(t)\text{ for }s\in S,t\in G\}
\end{equation}
If $\omega=\chi|\omega|$ as in the theorem above, then  it is easy to see that
\[
\luc_{\chi,S_{|\omega|}}(G)\subseteq \Fix \lt_\omega.
\]
If $\omega=|\omega|$ is a probability measure, then in many cases -- e.g.\ $G$ abelian (\cite{ChD})
or $G$ is SIN and $\omega$ is non-degenerate (\cite{Jaworski}) --
we have the Choquet-Deny theorem:  $\Fix \lt_\omega=\luc_{1,S_{|\omega|}}(G)$.

Suppose we are given  a  closed subsemigroup $S$ densely generating a
locally compact group $H$. 
Except in the case of abelian groups, it is not evident how to extend
a character $\chi_0:S\to\bt$ to a character on $H$.  Even in the abelian case
it is not clear that the extension can be assumed continuous if
$\chi_0$ is continuous.   

\begin{lem} \label{lem:CD-ext-to-char}
Let $S$ be a closed subsemigroup of $G$, let $H$ be the smallest
closed subgroup of $G$ containing $S$, and let $\chi_0:S\to\bt$  be a
continuous character. If there exists a non-zero bounded continuous function $f$ such that 
$f(st)=\conj{\chi_0(s)}f(t)$  for all $s\in S, t\in G$ then
$\chi_0$ extends to a continuous character $\chi$ on $H$. Furthermore  $f(st)=\conj{\chi(s)}f(t)$  for all $s\in H, t\in G$.
\end{lem}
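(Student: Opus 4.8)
The plan is to isolate inside $G$ the set of elements that act on $f$ as scalar multipliers, show that this set is a closed subgroup containing $S$ (hence $H$), and then read off $\chi$ from the associated multipliers. First I would fix once and for all a point $t_0\in G$ with $f(t_0)\neq 0$ (possible since $f\neq 0$), and introduce
\[
\Sigma=\set{s\in G}{\text{there is }c\in\C\text{ with }f(st)=c\,f(t)\text{ for all }t\in G}.
\]
By hypothesis $S\subseteq\Sigma$. For $s\in\Sigma$ the constant $c$ is unique (evaluate at $t=t_0$, so $c=f(st_0)/f(t_0)$); denote it $c_s$, and note $c_s=\conj{\chi_0(s)}$ for $s\in S$. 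Each $c_s$ is non-zero, since otherwise $f$ would vanish on $sG=G$, contradicting $f\neq0$. A direct substitution gives multiplicativity: from $f(s_1s_2t)=c_{s_1}f(s_2t)=c_{s_1}c_{s_2}f(t)$ we read off $s_1s_2\in\Sigma$ with $c_{s_1s_2}=c_{s_1}c_{s_2}$.

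Next I would check that $\Sigma$ is a closed subgroup of $G$. It is closed under inverses: replacing $t$ by $s^{-1}t$ in $f(st)=c_sf(t)$ and dividing by $c_s\neq0$ yields $f(s^{-1}t)=c_s^{-1}f(t)$. For topological closedness, take a net $s_i\to s$ in $G$ with $s_i\in\Sigma$; continuity of $f$ gives $c_{s_i}=f(s_it_0)/f(t_0)\to f(st_0)/f(t_0)=:c$, and then $f(s_it)=c_{s_i}f(t)\to c\,f(t)$ together with $f(s_it)\to f(st)$ forces $f(st)=c\,f(t)$ for every $t$, so $s\in\Sigma$. Since $\Sigma$ is a closed subgroup containing $S$ and $H$ is by definition the smallest such subgroup, I conclude $H\subseteq\Sigma$.

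It remains to see that these multipliers are unimodular on $H$, and here the boundedness of $f$ is essential; this is the one genuinely non-formal step. The map $s\mapsto|c_s|$ is a homomorphism from $\Sigma$ into the multiplicative group $\R_{>0}$, and from $|f(st_0)|=|c_s|\,|f(t_0)|\le\|f\|_\infty$ it is bounded on $\Sigma$. A bounded subgroup of $\R_{>0}$ is trivial (if $|c_s|=r\neq1$ then $\set{r^n}{n\in\Z}$ is unbounded and $s^n\in\Sigma$ for all $n$), so $|c_s|=1$ for all $s\in\Sigma$, in particular on $H$. Setting $\chi(s)=\conj{c_s}$ then gives a character $\chi:H\to\bt$: it is multiplicative because $c$ is, continuous because $s\mapsto c_s=f(st_0)/f(t_0)$ is continuous, and it extends $\chi_0$ since $\chi(s)=\conj{\conj{\chi_0(s)}}=\chi_0(s)$ for $s\in S$. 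Finally, for $s\in H$ and $t\in G$ the defining relation of $\Sigma$ reads $f(st)=c_sf(t)=\conj{\chi(s)}f(t)$, which is the last assertion. The main obstacle is precisely this unimodularity step: without the boundedness hypothesis $\chi$ could a priori be a homomorphism into $\C^\times$ rather than into $\bt$, so the argument must genuinely exploit that $f\in\luc(G)$ is bounded.
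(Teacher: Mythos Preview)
Your proof is correct and proceeds along a cleaner, more structural route than the paper's. The paper fixes $t_0\in U_f=\{t:f(t)\neq 0\}$, first shows $\langle S\rangle U_f\subset U_f$, then defines $\conj{\chi(s)}=f(st_0)/f(t_0)$ and verifies multiplicativity pairwise on $S\cup S^{-1}$, extending by induction to the algebraic group $\langle S\rangle$ and by continuity to $H$. You instead introduce the full scalar-stabiliser $\Sigma$, check in one stroke that it is a closed subgroup of $G$, and invoke minimality of $H$. Your approach is more conceptual and avoids the inductive bookkeeping; the paper's is more bare-hands.

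One remark on your unimodularity step: you present it as the ``one genuinely non-formal step'' requiring boundedness of $f$, but in fact it comes for free here. Since $c_s=\conj{\chi_0(s)}\in\bt$ for $s\in S$ and $s\mapsto c_s$ is a continuous homomorphism on $\Sigma$, the set $\{s\in\Sigma:|c_s|=1\}$ is a closed subgroup containing $S$, hence contains $H$. So the bounded-subgroup-of-$\R_{>0}$ argument, while perfectly valid, is not where the hypothesis on $f$ is actually doing work; boundedness (indeed continuity) of $f$ is already used in showing $\Sigma$ is closed. The paper's inductive argument likewise never invokes boundedness for unimodularity, getting it directly from $\chi_0(S)\subset\bt$.
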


\begin{proof}
	Set $U_f=\{t\in G:f(t)\not=0\}$.
	Let $s\in S$ and $t\in U_f$.  We have that $\conj{\chi_0(s)}f=\ell_s f$ (left translation)
	and hence $\ell_{s^{-1}}f=\chi_0(s)f$ which implies that
	\[
	\conj{\chi_0(s)}=\frac{f(st)}{f(t)}\quad\text{and}\quad\chi_0(s)=\frac{f(s^{-1}t)}{f(t)}\quad\text{so}\quad
	\langle S\rangle U_f\subset U_f
	\]
	where $\langle S\rangle$ is the subgroup generated (algebraically) by $S$.
	We fix some $t_0\in U_f$ and define $\chi:G\to\bc$ by
	\[
	\conj{\chi(s)}=\frac{f(st_0)}{f(t_0)}
	\]
	so that $\chi$ is continuous.  Then for $s,s'$ in $S\cup S^{-1}$ we have that
	\[
	\conj{\chi(ss')}=\frac{f(ss't_0)}{f(s't_0)}\frac{f(s't_0)}{f(t_0)}=\conj{\chi(s)\chi(s')}.
	\]
	As simple induction then shows that $\chi$ is multiplicative on $\langle S\rangle=\bigcup_{n=1}^\infty
	(S\cup S^{-1})^n$, so also on  $H$.  Similarly, we see that $\ell_s f=\conj{\chi(s)} f$ on $\langle S\rangle$, and
	hence also on $H$, which ends the proof.
\end{proof}

We consider below two cases in which the Choquet--Deny theorem 
always holds in the context of probability measures, and show that it also holds in the more general framework of contractive measures we consider here. 
 For the abelian case we adapt the remarkably simple proof of \cite{raugi},
and for the weakly almost periodic case we adapt a proof from \cite{temp}, which Tempel'man attributes
to Ryll-Nardzewski. Below $WAP(G)$ denotes the space of weakly almost periodic functions on $G$; recall also the notation introduced in \eqref{def:CDfp}.

\begin{theorem} \label{thm:fixed-abelian-and-wap}
	Let $G$ be a locally compact group and $\omega\in\M(G)_1$.
	
	{(i)} If $G$ is abelian and
	$\lt_\omega$ has a non-zero fixed point in $\luc(G)$,
	then there is a continuous character $\chi:G_{|\omega|}\to\bt$ such that
	$\omega = \chi |\omega|$ and 
	\[
	\Fix \lt_\omega=\luc_{\chi,G_{|\omega|}}(G)
	\]
	
	{(ii)}  If $\lt_\omega$ has a non-zero fixed point in $\WAP(G)$ 
	then there is a continuous character $\chi:G_{|\omega|}\to\bt$ such that
	$\omega = \chi |\omega|$ and 
	\[
	\Fix \lt_\omega\cap \WAP(G)=\WAP_{\chi,G_{|\omega|}}(G)
	\]
	where $\WAP_{\chi,G_{|\omega|}}(G)=\WAP(G)\cap \luc_{\chi,G_{|\omega|}}(G)$.
	In particular, we have that $\chi\in\WAP(G)|_{G_{|\omega|}}$.
\end{theorem}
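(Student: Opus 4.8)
The plan is to reduce both parts to a single covariance property and then feed it into the machinery already in place. In each case a non-zero fixed point lies in $\luc(G)$ (for (ii) recall $\WAP(G)\subseteq\luc(G)$), so Theorem~\ref{thm:fixed-in-luc} already supplies a continuous character $\chi\colon S_{|\omega|}\to\bt$ with $\omega=\chi|\omega|$; since $\|\omega\|=1$, the measure $|\omega|$ is a probability measure. Writing the fixed-point equation $\lt_\omega f=f$ as $f(t)=\int_{S_{|\omega|}}\chi(s)f(st)\,d|\omega|(s)$, I would introduce the twisted translations $T_sf(t)=\chi(s)f(st)$ for $s\in S_{|\omega|}$. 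These are surjective isometries of $\luc(G)$ that preserve $\WAP(G)$ and form a semigroup (as $\chi$ is multiplicative), and the fixed-point equation becomes the barycentric identity $f=\int_{S_{|\omega|}}T_sf\,d|\omega|(s)$, i.e.\ $\lt_\omega=\int_{S_{|\omega|}}T_s\,d|\omega|(s)$ is an averaging operator. The sought covariance $f(st)=\conj{\chi(s)}f(t)$ is exactly the statement $T_sf=f$, so everything comes down to showing that a fixed point is fixed by every $T_s$.

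Granting covariance, the remaining steps are routine. Applying Lemma~\ref{lem:CD-ext-to-char} with $S=S_{|\omega|}$, $H=G_{|\omega|}$, $\chi_0=\chi$ and the non-zero covariant $f$ extends $\chi$ to a continuous character on $G_{|\omega|}$ and upgrades the covariance to all $s\in G_{|\omega|}$; this gives $\omega=\chi|\omega|$ with $\chi$ defined on $G_{|\omega|}$, together with the inclusions $\Fix\lt_\omega\subseteq\luc_{\chi,G_{|\omega|}}(G)$ in (i) and $\Fix\lt_\omega\cap\WAP(G)\subseteq\WAP_{\chi,G_{|\omega|}}(G)$ in (ii). The reverse inclusions are the easy computation indicated before the theorem: if $f(st)=\conj{\chi(s)}f(t)$ for $s\in G_{|\omega|}$, then $\lt_\omega f(t)=\int_{S_{|\omega|}}\chi(s)\conj{\chi(s)}f(t)\,d|\omega|(s)=f(t)$ since $|\chi|=1$ and $|\omega|$ is a probability measure. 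Finally, fixing $t_0$ with $f(t_0)\neq0$, the identity $\conj{\chi(s)}=f(st_0)/f(t_0)$ exhibits $\conj\chi$ on $G_{|\omega|}$ as the restriction of the right translate $s\mapsto f(st_0)$ of a $\WAP$ function, whence $\chi\in\WAP(G)|_{G_{|\omega|}}$.

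The covariance step is the heart of the matter and the one I expect to be the main obstacle; here the two cases diverge, and in both the difficulty is passing from the integrated (barycentric) identity to the pointwise relation $T_sf=f$, which is not automatic because the sup-norm on $\luc(G)$ is not strictly convex. For (i) I would adapt Raugi's short argument \cite{raugi}: the decisive feature is that in an abelian group left translations commute with $\lt_\omega$, so for $a\in S_{|\omega|}$ the difference $g=\ell_af-\conj{\chi(a)}f$ is again a bounded fixed point of $\lt_\omega$ satisfying $g=|\omega|^{\conv n}\conv g$ for every $n$; Raugi's telescoping/averaging argument then forces $g=0$, i.e.\ $T_af=f$. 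I would stress that it is commutativity, not mere amenability, that is essential here, since the Choquet--Deny conclusion genuinely fails for some amenable groups. For (ii) I would instead run the Ryll--Nardzewski argument of \cite{temp}: because $f\in\WAP(G)$ its translates are relatively weakly compact, so the orbit $\{T_sf\}$ has weakly compact closed convex hull (Krein's theorem), invariant under the noncontracting semigroup of isometries $\{T_s\}$, and $f$ lies in this hull by the barycentric identity. The Jacobs--de Leeuw--Glicksberg splitting for this weakly almost periodic semigroup (equivalently, the Ryll--Nardzewski fixed point theorem) shows that the averaging operator $\lt_\omega$ annihilates the flight part and acts on the reversible part as an average over a compact group of isometries, so that its fixed space coincides with $\bigcap_s\Fix T_s$; as $\lt_\omega f=f$, the equality case of that compact-group average yields $T_sf=f$ for all $s\in S_{|\omega|}$, as required.
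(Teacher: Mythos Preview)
Your overall scaffolding matches the paper exactly: obtain $\chi$ on $S_{|\omega|}$ from Theorem~\ref{thm:fixed-in-luc}, establish the covariance $\ell_sf=\conj{\chi(s)}f$ for $s\in S_{|\omega|}$, then invoke Lemma~\ref{lem:CD-ext-to-char} to pass to $G_{|\omega|}$; the reverse inclusions and the remark on $\chi\in\WAP(G)|_{G_{|\omega|}}$ are handled just as you describe.

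For part~(i) your sketch has a genuine gap. You assert that $g=\ell_af-\conj{\chi(a)}f$ satisfies $g=|\omega|^{\conv n}\conv g$, but being a fixed point of $L_\omega$ only gives $g=L_{\omega^{\conv n}}g$, which is not the same once $\omega\neq|\omega|$; and even granting that identity, there is no general principle forcing a bounded $\omega$-harmonic function to vanish. Raugi's argument does not proceed via this difference. The paper's adaptation works instead with $|f|^2$: one sets
\[
g_n(t)=\int_{S^{\times n}}\bigl|\chi(s_1\cdots s_n)f(s_1\cdots s_nt)-\chi(s_1\cdots s_{n-1})f(s_1\cdots s_{n-1}t)\bigr|^2\,d|\omega|^{\times n},
\]
expands the square, and uses commutativity (to permute $s_n$ past $s_1\cdots s_{n-1}$) together with $L_\omega f=f$ to identify $g_n=L_{|\omega|^{\conv n}}(|f|^2)-L_{|\omega|^{\conv(n-1)}}(|f|^2)$. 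The telescoping series $\sum g_n$ is then bounded, so $g_n\to 0$; on the other hand Cauchy--Schwarz in the inner $(n-1)$ variables gives $g_n\ge g_1$, forcing $g_1=0$ and hence $\chi(s)f(st)=f(t)$ for $|\omega|$-a.e.\ $s$. The passage through $|f|^2$ is precisely what produces a relation involving $|\omega|$ rather than $\omega$.

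For part~(ii) you take a genuinely different route. The paper does \emph{not} invoke Ryll--Nardzewski or the Jacobs--de Leeuw--Glicksberg splitting; it runs a vector-valued martingale argument (which Tempel'man attributes to Ryll--Nardzewski): on the product space $(G^\infty,|\omega|^{\times\infty})$ one forms the $\WAP(G)$-valued martingale $\beta_n(s_1,s_2,\dots)=\chi(s_1\cdots s_n)\ell_{s_n\cdots s_1}f$, observes that its essential range lies in $\bt\{\ell_gf:g\in G\}$ (relatively weakly compact since $f\in\WAP(G)$), applies Chatterji's convergence theorem to get $L^1$-convergence, and reads off $\int_G\|\chi(s)\ell_sf-f\|_\infty\,d|\omega|(s)=0$. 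Your JdLG approach is a reasonable alternative strategy, but the step you flag as ``the equality case of that compact-group average yields $T_sf=f$'' is exactly the non-obvious point (the sup norm is not strictly convex), and you have not supplied the argument. The martingale proof sidesteps this by producing the \emph{norm} identity $\|\chi(s)\ell_sf-f\|_\infty=0$ directly.
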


\begin{proof}
In either case above, Theorem \ref{thm:fixed-in-luc} provides a character
$\chi:S_{|\omega|}\to\bt$ for which $\omega=\chi|\omega|$.  We will
proceed to show for each case that the fixed points of $L_\omega$ are contained in the set $\luc_{\chi,S_{|\omega|}}(G)$. Note that  Lemma
\ref{lem:CD-ext-to-char} shows that we may  deem from the beginning
$\chi$ to be a character on $G_{|\omega|}$. 
	
	{ (i)}  We let $f\in \Fix \lt_\omega\setminus\{0\}$ and write $S=\supp|\omega|$.  Define for $n \in \bn$ and $t \in G$
	\begin{align*}
	g_n(t)=\int_{S^{\times n}}|\chi(s_1\dots s_n)f(s_1\dots s_nt)-
	\chi(s_1\dots s_{n-1})f(s_1\dots s_{n-1}t)|^2\,
	d|\omega|^{\times n}(s_1,\dots,s_n) 
	\end{align*}
	Expanding the integrand,
        exploiting the fact that $G$ is abelian
        (so that $s_1\dots s_nt = s_ns_1\dots s_{n-1}t$)
        and that $f\in \Fix \lt_\omega$ we see that
        $g_n=\lt_{|\omega|^{\conv n}}(|f|^2)- 
	\lt_{|\omega|^{\conv (n-1)}}(|f|^2)$. Hence the telescopic series $\sum_{n=1}^\infty g_n$ converges (as increasing and bounded),
	so that $\lim_{n\to \infty}g_n=0$.
	On the other hand,  the Cauchy-Schwarz inequality tells us that for each $t \in G$
	\begin{align*}
	&\int_{ S^{\times (n-1)} }|\chi(s_1\dots s_n)f(s_1\dots s_nt)-
	\chi(s_1\dots s_{n-1})f(s_1\dots s_{n-1}t)|^2 \,d|\omega|^{\times (n-1)}(s_1,\dots,s_{n-1})  \\
	&\geq \left|\int_{S^{\times (n-1)}}[\chi(s_1\dots s_n)f(s_1\dots s_nt)-
	\chi(s_1\dots s_{n-1})f(s_1\dots s_{n-1}t)]\,d|\omega|^{\times (n-1)}(s_1,\dots,s_{n-1})\right|^2 \\
	&=|\chi(s_n)f(s_nt)-f(t)|^2.
	\end{align*} 
        Hence an application of the Fubini--Tonelli theorem shows that
        $g_n\geq g_1$.  Thus $g_1=0$, which 
	tells us for any $t$ that $\chi(s)f(st)=f(t)$ for $|\omega|$-a.e.\ $s$, and hence, by continuity,
	for every $s\in S$. In other words $\ell_s f=\conj{\chi(s)}f$ for $s\in S$.  A
	simple induction shows this holds for $s\in\bigcup_{n=1}^\infty S^n$ and hence for all  $s \in S_{|\omega|}$. 
	
	{(ii)} Let $f\in \Fix \lt_\omega\cap\WAP(G)\setminus\{0\}$. 
	Consider the product probability space
	\[
	(\Omega,\mathcal{B}_\infty,P)=(G^\infty,\mathcal{B}_\infty,|\omega|^{\times\infty})
	\]
	where $\mathcal{B}_\infty$ is the $\sigma$-algebra generated by the sequence
	of $\sigma$-algebras given for each  $n\in \bn$ by 
	\[
	\mathcal{B}_n=\sigma\langle B_1\times\dots\times B_n\times
	G\times G\times\dots :\text{ each }B_i\text{ is a Borel set in }G\rangle.
	\]
	Let $\mathcal{L}(\mathcal{B}_n)=L^1(\Omega,\mathcal{B}_n,P;\WAP(G))$
	for each $n=1,2\dots,\infty$.  We have condititional 
	expectations $\mathbb{E}_n:\mathcal{L}(\mathcal{B}_\infty)\to\mathcal{L}(\mathcal{B}_n)$ given by
	\[
	[\mathbb{E}_n(\alpha)](s_1,s_2,\dots)=\int_{\Omega}
	\alpha(s_1,\dots,s_{n-1},s_n,s_{n+1},\dots)\,dP(s_{n+1},s_{n+2},\dots).
	\]
	Define $\beta_n$ in $\mathcal{L}(\mathcal{B}_n)$ for $P$-a.e.\ $(s_1,s_2,\dots)$ by
	\[
	\beta_n(s_1,s_2,\dots)=\chi(s_1\dots s_n)\ell_{s_n\dots s_1}f
	\]
	and observe that $\|\beta_n\|_{L^1}\leq\|f\|_{\infty}$.  Also, observe that for $P$-a.e.\
	$(s_1,s_2,\dots)$ we have
	\begin{align}
	\|\chi(s_n)\ell_{s_n}f-f\|_{\infty}&=
	\|\chi(s_1\dots s_{n-1})l_{s_{n-1}\dots s_1}[\chi(s_n)\ell_{s_n}f-f]\|_{\infty}  =\|(\beta_n-\beta_{n-1})(s_1,s_2,\dots)\|_{\infty} \label{eq:betadif}
	\end{align}
	We compute, for any $n\geq 2$, $t \in G$ and $(s_1,s_2,\ldots)$ as above  that
	\begin{align*}
	\left([\mathbb{E}_{n-1}(\beta_n)](s_1,s_2, \ldots)\right)(t)
	&=\chi(s_1\dots s_{n-1})\ell_{s_{n-1}\dots s_1}\int_G f(s_nt)\chi(s_n)\,d|\omega|(s_n) \\
	&=\chi(s_1\dots s_{n-1})\ell_{s_{n-1}\dots s_1} f(t)=(\beta_{n-1}(s_1,s_2,\dots))(t).
	\end{align*}
	Hence $(\beta_n)_{n=1}^\infty$ is a martingale in $\mathcal{L}(\mathcal{B}_\infty)$.  
Moreover, the essential range of each $\beta_n$ is contained in
$\bt\{l_g f: g \in G\}$,
and thus is relatively weakly compact as $f\in\WAP(G)$.  Hence by \cite[VIII Theo.~2]{chat2}, there is $\beta$ in 
	$\mathcal{L}(\mathcal{B})$, where $(\Omega,\mathcal{B},\bar{P})$ is the completion of 
	$\mathcal{B}_\infty$, for which $\beta=\lim_{n\to\infty}\beta_n$, $\bar{P}$-a.e., 
	and each $n \in \bn$ we have $\beta_n=\mathbb{E}_n(\beta)$.  
	By \cite[Theo.~1]{chat1}, $\lim_{n\to\infty}\|\beta_n-\beta\|_{L^1}=0$. 
	Thus for $P$-a.e.\ $(s_1,s_2,\dots)$ we have, using (\ref{eq:betadif}), that
\begin{align*}
  \int_G\|\chi(s_n)\ell_{s_n}f-f\|_\infty\,d|\omega|(s_n)
  &=\int_\Omega \|\beta_n-\beta_{n-1}\|_\infty\,dP
 =\|\beta_n-\beta_{n-1}\|_{L^1}\overset{n\to\infty}{\longrightarrow}0
	\end{align*}
and hence $\int_G\|\chi(s)\ell_s f-f\|_\infty\,d|\omega|(s)=0$, so $\ell_s f=\overline{\chi(s)}f$
for $|\omega|$-a.e.\ $s$.  As in the last part of (i) above we see that
	$f\in\WAP_{\chi,S_{|\omega|}}(G)$.
	
	Notice that by taking right translations, we may assume that our non-zero fixed point
	$f$ satisfies $f(e)\not=0$.  Thus for $s$ in $G_{|\omega|}$ we must have
	$f(s)=\conj{\chi(s)}f(e)$, so that $\conj{\chi}\in\WAP(G)|_{G_{|\omega|}}$.
	Clearly the algebra $\WAP(G)_{G_{|\omega|}}$ of functions on $G_{|\omega|}$ is self-adjoint.
\end{proof}

We remark that the proof of (i) shows that if $\omega=\chi|\omega|$
for a continuous character 
on $S_{|\omega|}$, then $\ell_s h=\conj{\chi(s)}h$ for $s$ in
$S_{|\omega|}$ and every bounded  Borel-measurable function $h$.

If $G$ is a semisimple Lie group with finite center, 
then it is shown in \cite{veech} (\cite{chou} for $G=\mathrm{SL}_2(\mathbb{R})$)
that for any non-compact closed one-parameter subgroup $R\cong\mathbb{R}$ 
we have $\WAP(G)|_R=C_0(\mathbb{R})\oplus\bc 1$.  Hence there are many
measures $\omega$ in $\M(G)_1$ for which $\Fix\lt_\omega\cap\WAP(G)=\{0\}$.

If we ask about the fixed points in $C_0(G)$, also here in the classical case we obtain a more refined result than Proposition
\ref{prop:more-equiv}.

\begin{cor}\label{cor:c_naught}
Let $G$ be a locally compact group and $\omega\in\M(G)_1$.  Then, the
following are equivalent: 
\begin{rlist}
\item $\Fix \lt_\omega\cap C_0(G)\not=\{0\}$;
\item $G_{|\omega|}$ is compact and there is a continuous character
  $\chi:G_{|\omega|}\to\bt$ such that $\omega=\chi|\omega|$; 
\item  Ces\`aro sums, $S_n(\omega)$, do not converge weak* to $0$.
\end{rlist}
In this case we have that
$\chi m_{G_{|\omega|}} = \wlim_{n\to\infty}S_n(\omega)$, and
that 
\[
\Fix \lt_\omega\cap C_0(G) = \Fix \lt_{\chi m_{G_{|\omega|}}}\cap
C_0(G) =\lt_{\chi m_{G_{|\omega|}}}(C_0(G)). 
\]
In particular, if $G$ is compact and $|\omega|$ is adapted, then
$\Fix \lt_\omega = \C \overline{\chi}$. 
\end{cor}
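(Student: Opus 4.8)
The plan is to obtain the three equivalences by specialising the quantum results of Section~4 to the commutative case $C_0^u(\G)=C_0(G)$, $\M^u(\G)=\M(G)$, and then to identify the limit and the fixed-point space explicitly. The equivalence (i)$\iff$(iii) is immediate from Proposition~\ref{prop:more-equiv}: condition (i) here is exactly condition (iv) there, while (iii) here is condition (i) there. It thus remains to link these with (ii) and to establish the displayed formulas.

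For (i)$\implies$(ii) I would first invoke Theorem~\ref{thm:fixed-in-luc}, which (since a fixed point in $C_0(G)\subset\luc(G)$ exists) supplies a continuous character $\chi\colon S_{|\omega|}\to\bt$ with $\omega=\chi|\omega|$. The crux is to prove that $S_{|\omega|}$ is compact, and here I would use condition (vi) of Proposition~\ref{prop:more-equiv}: there is $e\in C_0(G)_+$ with $|\omega|^{\conv k}.e=|\omega|^{\conv k}$ for every $k$. In the commutative setting this reads $\int a(e-1)\,d|\omega|^{\conv k}=0$ for all $a\in C_0(G)$, so $e\equiv 1$ $|\omega|^{\conv k}$-almost everywhere and, by continuity, $e\equiv 1$ on $\supp|\omega|^{\conv k}=\overline{(\supp|\omega|)^k}$. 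Taking the union over $k$ and closing up gives $e\equiv 1$ on $S_{|\omega|}$, whence $S_{|\omega|}\subseteq\{t:e(t)\ge\tfrac12\}$, a compact set because $e\in C_0(G)$. As $S_{|\omega|}$ is closed it is therefore compact; a closed compact subsemigroup of a group is automatically a subgroup (cancellation on a compact semigroup forces invertibility), so $S_{|\omega|}=G_{|\omega|}$ is a compact group and $\chi$ is a character on it. This is (ii).

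For (ii)$\implies$(iii) and the limit formula, write $m=m_{G_{|\omega|}}$. Since $\chi$ is multiplicative on $S_{|\omega|}$ and the supports of all convolution powers lie there, one records $\omega^{\conv k}=\chi\,|\omega|^{\conv k}$, hence $S_n(\omega)=\chi\,S_n(|\omega|)$ for all $n$. As $|\omega|$ is a probability measure adapted to the compact group $G_{|\omega|}$, its Cesàro averages converge weak* to Haar measure, $S_n(|\omega|)\to m$ (a classical fact; alternatively one identifies the idempotent $\wlim S_n(|\omega|)$ via Kawada--It\^o using $|\omega|\conv\wt{|\omega|}=\wt{|\omega|}$). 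Multiplying by the fixed continuous function $\chi$, everything being supported in the compact set $G_{|\omega|}$, yields $S_n(\omega)\to\chi m$ weak*, which is non-zero; this gives (iii) and simultaneously $\wlim_n S_n(\omega)=\chi m$, the full limit existing by Lemma~\ref{MGlimit}. By Lemma~\ref{statelimits} the functional $\wt\omega:=\chi m$ is a contractive idempotent, which one also checks directly from multiplicativity of $\chi$ and $m\conv m=m$. Then Lemma~\ref{lem:fix_L} gives $\Fix\lt_\omega=\Fix\lt_{\chi m}$, and intersecting with $C_0(G)$ produces the first displayed equality. Since $\chi m\in\M(G)$, the operator $\lt_{\chi m}$ preserves $C_0(G)$ and is an idempotent with range its own fixed-point set, so $\lt_{\chi m}(C_0(G))=\Fix\lt_{\chi m}\cap C_0(G)$, the second equality.

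Finally, for the ``in particular'' statement I would note that when $G$ is compact every closed subsemigroup is a subgroup, so adaptedness of $|\omega|$ forces $S_{|\omega|}=G_{|\omega|}=G$, i.e.\ $|\omega|$ is non-degenerate. Then $\Fix\lt_{|\omega|}=\C1$ by \cite[Theorem~3.6]{knr}, and the non-degenerate form of Theorem~\ref{thm:group-like} recorded in the discussion preceding Theorem~\ref{thm:fixed-in-luc} gives $\Fix\lt_\omega=(\Fix\lt_{|\omega|})\conj{\chi}=\C\conj{\chi}$. The main obstacle is the compactness step: correctly reading condition~(vi) of Proposition~\ref{prop:more-equiv} in the commutative case to trap $\supp|\omega|^{\conv k}$ inside a level set of a $C_0$-function, and then upgrading the resulting compact subsemigroup to a compact subgroup.
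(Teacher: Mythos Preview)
Your argument is correct, and takes a genuinely different route from the paper's in two places.

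For (i)$\Rightarrow$(ii), the paper invokes Theorem~\ref{thm:fixed-abelian-and-wap}(ii) (the $\WAP$ case, proved via a Banach-valued martingale argument) to obtain the character on $G_{|\omega|}$ and the description $\Fix L_\omega\cap\WAP(G)=\WAP_{\chi,G_{|\omega|}}(G)$; compactness then falls out because a non-zero $f\in C_0(G)$ with $|f|$ constant on $G_{|\omega|}$-cosets forces those cosets to be compact. You instead stay within the general quantum machinery: Theorem~\ref{thm:fixed-in-luc} gives the character on $S_{|\omega|}$, and condition~(vi) of Proposition~\ref{prop:more-equiv} supplies a positive $e\in C_0(G)$ equal to $1$ on every $\supp|\omega|^{\conv k}$, trapping $S_{|\omega|}$ in a compact level set. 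This avoids the martingale convergence theorem entirely and is closer in spirit to the quantum arguments of Section~4.

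For the limit identification, the paper argues indirectly: $\wt\omega$ is a non-zero contractive idempotent in $M(G)$, so by Greenleaf's classification $\wt\omega=\theta m_K$ for some compact $K$ and character $\theta$, and then one matches the fixed-point spaces $C_{0,\theta,K}(G)=C_{0,\chi,G_{|\omega|}}(G)$ to conclude $K=G_{|\omega|}$ and $\theta=\chi$. You compute directly: multiplicativity of $\chi$ on $G_{|\omega|}$ gives $\omega^{\conv k}=\chi\,|\omega|^{\conv k}$, and Kawada--It\^o (applied inside the compact group $G_{|\omega|}$) yields $S_n(|\omega|)\to m_{G_{|\omega|}}$, hence $S_n(\omega)\to\chi m_{G_{|\omega|}}$. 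This is more elementary and self-contained, bypassing Greenleaf's theorem. The paper's route, on the other hand, illustrates how the structure theory of contractive idempotents feeds back into the analysis, which is thematically consistent with the rest of the article.
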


\begin{proof}
The equivalence of (i) and (iii) is from Proposition
\ref{prop:more-equiv}.  It follows from 
Theorem \ref{thm:fixed-abelian-and-wap} that (i) is equivalent to
(ii), where the compactness 
of $G_{|\omega|}$ follows form the fact that if $f$ is a non-zero element of 
$C_0(G)\cap\WAP_{\chi,G_{|\omega|}}(G)$ (see the terminology introduced in the last theorem)
then $|f|$ is constant on cosets of $G_{|\omega|}$.  Since
$\chi m_{G_{|\omega|}}$ is an idempotent, a rudimentary
computation shows that
$C_{0,\chi,G_{|\omega|}}(G):=\{f\in C_0(G):f(st)=\conj{\chi(s)}f(t)\text{ for }s\in G_{|\omega|},t\in G\}=\lt_{\chi m_{G_{|\omega|}}}(C_0(G))
=\Fix \lt_{\chi m_{G_{|\omega|}}}\cap C_0(G)$.
	
Finally, by Lemma~\ref{MGlimit} $\wt{\omega} = \wlim_{n\to\infty} S_n(\omega)$ 
is a non-zero contractive idempotent with
$C_{0,\chi,G_{|\omega|}}(G)=\Fix\lt_{\tilde{\omega}}\cap C_0(G)$. 
Hence by \cite{greenleaf:homo}, $\tilde{\omega}=\theta m_K$ for
some compact group $K$ and a continuous character $\theta:K\to\bt$.  
But $\Fix\lt_{\theta m_K}\cap C_0(G)=\lt_{\theta m_K}(C_0(G))=C_{0,\theta,K}(G)$.
We conclude that $K=G_{|\omega|}$ and 
$\theta=\chi$, and hence $\wt\omega = \chi m_{G_{|\omega|}}$.
\end{proof}

Note that for $\omega$ in $\Prob(G)$, it is shown in \cite{kawadaito} that 
$m_{G_\omega}= \wlim_{n\to\infty}S_n(\omega)$.

\vspace*{0.2cm}

We finish with some observations generalising the results of this section to a more abstract context, and present one more application.

Let $E$ be a Banach space.  A strong operator continuous
representation $\pi:G\to\mathcal{GL}(E)$ will be called
{\it essentially weakly almost periodic} if there is a separating subspace
$F\subseteq E^*$ such that
\[
M_{\pi,F}=\linsp\{\langle f,\pi(\cdot)x\rangle:x\in E,f\in F\}\subseteq \WAP(G).
\]
i.e.\ the space generated by matrix coefficients of $\pi$ with $F$ consists of
weakly almost periodic functions.  Notice that the uniform boundedness principle
shows that $\pi$ must have bounded range.  If $\omega\in\M(G)$, then we define
the operator $\pi(\omega)$ in terms of Bochner integrals:
$\pi(\omega)x=\int_G\pi(s)x\,d\omega(s)$ for $x$ in $E$. 

\begin{cor}\label{cor:ecwap_rep}
Let $G$ be a locally compact group, $\omega\in\M(G)_1$, and
$\pi:G\to\mathcal{GL}(E)$ 
be an essentially weakly almost periodic representation.  If
$\pi(\omega)$ has a non-zero fixed point, then there is a continuous character
$\chi:G_{|\omega|}\to\bt$ such that $\omega = \chi |\omega|$ and
\[
\Fix \pi(\omega)=\{x\in E:\pi(s)x=\overline{\chi(s)}x\text{ for all }s\text{ in }G_{|\omega|}\}.
\]
In particular, we have that $\bar{\chi}\in M_{\pi,F}|_{G_{|\omega|}}$,
where $F$ is as in the definition of essential weak almost periodicity
of $\pi$.
\end{cor}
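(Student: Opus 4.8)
The plan is to reduce the statement to its scalar counterpart, Theorem~\ref{thm:fixed-abelian-and-wap}(ii), by passing to matrix coefficients. Fix a non-zero $x\in\Fix\pi(\omega)$ and, for each $f\in F$, form the continuous function $h_f(t)=\langle f,\pi(t)x\rangle$; by the very definition of essential weak almost periodicity, $h_f\in M_{\pi,F}\subseteq\WAP(G)$. The key computation is that $h_f$ is fixed by the \emph{right} convolution operator $R_\omega h(t)=\int_G h(ts)\,d\omega(s)$: using $\pi(ts)=\pi(t)\pi(s)$ and the fixed-point equation $\pi(\omega)x=\int_G\pi(s)x\,d\omega(s)=x$, and pulling the bounded operator $\pi(t)$ and the functional $f$ out of the Bochner integral,
\[
\int_G h_f(ts)\,d\omega(s)=\int_G\langle f,\pi(t)\pi(s)x\rangle\,d\omega(s)=\langle f,\pi(t)\pi(\omega)x\rangle=\langle f,\pi(t)x\rangle=h_f(t).
\]
Choosing $f\in F$ with $\langle f,x\rangle\neq 0$ (possible since $F$ separates points and $x\neq 0$) makes $h_f$ non-zero.

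Since the theorems of this section are phrased for the left operator $\lt_\omega$, I would next transport $h_f$ to the left-handed picture by inversion. Writing $\check\omega$ for the push-forward of $\omega$ under $t\mapsto t^{-1}$ and $\check h_f(t)=h_f(t^{-1})$, a change of variables shows that $\check h_f$ is a non-zero element of $\WAP(G)$ (as $\WAP(G)$ is inversion-invariant) fixed by $\lt_{\check\omega}$. Because total variation commutes with inversion, $|\check\omega|$ is the push-forward of $|\omega|$ and hence $G_{|\check\omega|}=G_{|\omega|}$. Applying Theorem~\ref{thm:fixed-abelian-and-wap}(ii) to $\check\omega\in\M(G)_1$ yields a continuous character $\check\chi\colon G_{|\omega|}\to\bt$ with $\check\omega=\check\chi\,|\check\omega|$ and $\check h_f\in\WAP_{\check\chi,G_{|\omega|}}(G)$. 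Setting $\chi(s):=\check\chi(s^{-1})$ gives a $\bt$-valued character on $G_{|\omega|}$ with $\omega=\chi|\omega|$; this $\chi$ is canonical (independent of $f$), being determined by the relation $\check\omega=\check\chi|\check\omega|$.

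It then remains to unwind the equivariance. Translating $\check h_f(st)=\overline{\check\chi(s)}\,\check h_f(t)$ back through inversion gives $h_f(ab)=\overline{\chi(b)}\,h_f(a)$ for all $a\in G$, $b\in G_{|\omega|}$. Since $h_f(ab)=\langle f,\pi(a)\pi(b)x\rangle$, evaluating at $a=e$ and letting $f$ range over $F$ yields $\langle f,\pi(b)x\rangle=\overline{\chi(b)}\langle f,x\rangle$ for every $f\in F$, whence $\pi(b)x=\overline{\chi(b)}x$ for all $b\in G_{|\omega|}$ because $F$ separates points. This proves the inclusion $\Fix\pi(\omega)\subseteq\{x:\pi(s)x=\overline{\chi(s)}x\text{ for all }s\in G_{|\omega|}\}$. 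The same computation, read backwards, gives the converse: if $\pi(s)x=\overline{\chi(s)}x$ on $S_{|\omega|}\subseteq G_{|\omega|}$, then using $d\omega=\chi\,d|\omega|$ and $|\chi|\equiv 1$,
\[
\pi(\omega)x=\int_G\pi(s)x\,\chi(s)\,d|\omega|(s)=\Bigl(\int_G|\chi(s)|^2\,d|\omega|(s)\Bigr)x=\|\omega\|\,x,
\]
and the existence of a non-zero fixed point forces $\|\omega\|=1$, so $\pi(\omega)x=x$. Finally, for $f$ with $\langle f,x\rangle\neq 0$ the identity $h_f(b)=\overline{\chi(b)}\langle f,x\rangle$ on $G_{|\omega|}$ exhibits $\overline{\chi}$, up to the scalar $\langle f,x\rangle$, as the restriction of the matrix coefficient $h_f\in M_{\pi,F}$, giving $\overline{\chi}\in M_{\pi,F}|_{G_{|\omega|}}$.

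The main obstacle I anticipate is the left/right bookkeeping: the representation naturally produces a function fixed by the right convolution operator rather than $\lt_\omega$, so care is needed to ensure that the character $\chi$ and the identity $\omega=\chi|\omega|$ emerge with the correct variance. I would handle this through the inversion argument above, verifying that $|\check\omega|$ is the push-forward of $|\omega|$, that $G_{|\check\omega|}=G_{|\omega|}$, and that $\chi(s)=\check\chi(s^{-1})$ is again a character (which holds since $\bt$ is abelian). Everything else is a routine transfer through the separating family $F$.
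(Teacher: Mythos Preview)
Your proof is correct and follows essentially the same route as the paper: both pass to matrix coefficients $h_f(t)=\langle f,\pi(t)x\rangle$, observe these are fixed by right convolution, transport via inversion to obtain fixed points of $L_{\check\omega}$ in $\WAP(G)$, apply Theorem~\ref{thm:fixed-abelian-and-wap}(ii), and unwind through the separating family $F$. The only cosmetic difference is in the converse inclusion, where the paper invokes the full equality $\Fix L_{\check\omega}\cap\WAP(G)=\WAP_{\bar\chi,G_{|\omega|}}(G)$ from Theorem~\ref{thm:fixed-abelian-and-wap}(ii) rather than your direct integral computation.
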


\begin{proof}
Let $\check{\pi}(s)=\pi(s^{-1})$ for $s$ in $G$, so $\check{\pi}$ is
an anti-homomorphism.   
If $x\in E$ and $f\in F$, let $\pi_{f,x}=\la f,\pi(\cdot)x\ra$ .
We also let $\check{\omega}$ denote the unique measure which satisfies
$\int_G g\,d\check{\omega}=\int_G\check{g}\,d\omega$ where
$\check{g}(s)=g(s^{-1})$ 
for $g$ in $\WAP(G)$.  Notice that
$\check{\pi}_{f,x}=\la f,\check{\pi}(\cdot)x\ra$. 
We then have for $t$ in $G$ that
\[
L_{\check{\omega}} \check{\pi}_{f,x}(t)
=\int_G\la f,\check{\pi}(s^{-1}t)x\ra\,d\omega(s)
=\int_G\la f,\check{\pi}(t)\pi(s)x\ra\,d\omega(s)
=\check{\pi}_{f,\pi(\omega)x}(t).
\]
Since $F$ is separating, it follows that, for $x\in E$, we have 
$x\in\Fix\pi(\omega)$ if and only if 
$\check{\pi}_{f,x}\in \Fix \lt_{\check{\omega}}\cap \WAP(G)$ for every
$f$ in $F$. 
Suppose that $\Fix\pi(\omega)\not=\{0\}$.
By Theorem \ref{thm:fixed-abelian-and-wap} (ii),
$\check{\omega}=\bar{\chi}|\check{\omega}|=\bar{\chi}|\omega|^\vee$
for some continuous character $\chi$ on 
$G_{|\check{\omega}|}=G_{|\omega|}$, and hence $\omega=\chi|\omega|$.
Furthermore,
\[
\check{\pi}_{f, x} \in \WAP_{\bar{\chi},G_{|\omega|}}(G)
\]
whenever $x\in \Fix\pi(\omega)$ and $f\in F$.
Hence, for $x$ in $E$, we have that $x\in\Fix\pi(\omega)$
exactly when for $f$ in $F$ and $s$ in $G_{|\omega|}$ we have
\[
\la f,\pi(s)x\ra=\check{\pi}_{f,x}(s^{-1})
=\chi(s^{-1})\check{\pi}_{f,x}(e)=\overline{\chi(s)}\pi_{f,x}(e)=\la f,\overline{\chi(s)}x\ra
\]
and, again since $F$ is separating, this happens exactly when $\overline{\chi(s)}x=\pi(s)x$.  In this case, a variant of the
last computation also shows that $\overline{\chi(s})\pi_{f,x}(e)=\pi_{f,x}(s)$, so $\bar{\chi}\in M_{\pi,F}|_{G_{|\omega|}}$.
\end{proof}

Finally  consider the left regular representation
$\lambda_p:G\to \mathcal{GL}(L^p(G))$, 
$\lambda_p(s)f(t)=f(s^{-1}t)$ for each $s\in G$ and almost every $t\in G$.  As before we let $H^p_\omega(G)=\Fix\lambda_p(\check\omega)$.  In the
commutative setting (i.e\ the case of locally compact groups), we gain
the following improvement to  Theorem \ref{prop:Lpharmonic}. 

\begin{cor}
	Let $G$ be a locally compact group, let $p\in [1,\infty)$, and let 
          $\omega\in \M(G)_1$. 
	Suppose that $H^p(G)\not=\{0\}$.  Then $G_{|\omega|}$ is
        compact, $\omega=\chi|\omega|$ 
	for some continuous character $\chi:G_{|\omega|}\to\bt$, and
	\[
	H^p_\omega(G)= \Ran \lambda_p(\overline{\chi} m_{G_{|\omega|}})
	\]
	where $m_{G_{|\omega|}}$ is normalised Haar measure on $G_{|\omega|}$.
	In particular, if $\omega$ is adapted, then $H^p_\omega(G)=\bc\bar{\chi}$.
\end{cor}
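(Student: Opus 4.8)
The plan is to identify $\lambda_p$ as an essentially weakly almost periodic representation and then apply Corollary~\ref{cor:ecwap_rep} to the reflected measure $\check\omega$, recalling that $H^p_\omega(G)=\Fix\lambda_p(\check\omega)$ (so the hypothesis is $H^p_\omega(G)\neq\{0\}$). The step I expect to be the main obstacle is verifying essential weak almost periodicity, i.e.\ producing a separating subspace $F\subseteq L^p(G)^*$ whose matrix coefficients against $\lambda_p$ lie in $\WAP(G)$. For $p\in(1,\infty)$ I would take $F=L^q(G)$ (with $\tfrac1p+\tfrac1q=1$), and for $p=1$ the smaller, still separating, subspace $F=C_0(G)\subseteq L^\infty(G)$. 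For $g\in F$ and $x\in L^p(G)$ the coefficient is
\[
s\longmapsto \la g,\lambda_p(s)x\ra=\int_G g(t)\,x(s^{-1}t)\,dt,
\]
and I claim it lies in $C_0(G)\subseteq\WAP(G)$. This is immediate when $g$ and $x$ have compact support, since then the coefficient is supported in the compact set $(\supp g)(\supp x)^{-1}$; the general case follows by density together with the uniform bound $\|\la g,\lambda_p(\cdot)x\ra\|_\infty\le\|g\|_q\|x\|_p$ (respectively $\le\|g\|_\infty\|x\|_1$ when $p=1$), using that each $\lambda_p(s)$ is an $L^p$-isometry, while continuity is guaranteed by strong continuity of $\lambda_p$ for $p<\infty$.

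Granting this, Corollary~\ref{cor:ecwap_rep} applied to $\pi=\lambda_p$ and $\check\omega\in\M(G)_1$ yields a continuous character $\psi$ on $G_{|\check\omega|}=G_{|\omega|}$ (note $|\check\omega|=|\omega|^\vee$, so the generated groups coincide) with $\check\omega=\psi|\check\omega|$ and
\[
H^p_\omega(G)=\set{x\in L^p(G)}{\lambda_p(s)x=\overline{\psi(s)}\,x \text{ for all } s\in G_{|\omega|}}.
\]
Reflecting $\check\omega=\psi|\check\omega|$ and putting $\chi(s)=\psi(s^{-1})=\overline{\psi(s)}$ gives a continuous character $\chi\colon G_{|\omega|}\to\bt$ with $\omega=\chi|\omega|$, and the eigenrelation above becomes $\lambda_p(s)x=\chi(s)x$ for $s\in G_{|\omega|}$.

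Next I would prove that $H:=G_{|\omega|}$ is compact. Fixing $0\neq x\in H^p_\omega(G)$ and taking moduli shows that $|x|\in L^p(G)$ is non-zero, non-negative and left-$H$-invariant (constant on the cosets $Ht$). Choosing $c>0$ with $\{|x|\ge c\}$ of positive measure and a compact $K\subseteq\{|x|\ge c\}$ with $0<\mu(K)<\infty$, invariance gives $|x|\ge c$ on $HK$, so $\|x\|_p^p\ge c^p\mu(HK)$. Were $H$ non-compact, a greedy selection would produce $h_1,h_2,\dots\in H$ with the $h_nK$ pairwise disjoint --- at each stage $H\not\subseteq\bigcup_{i\le n}h_i(KK^{-1})$, as otherwise $H$ would be a finite union of compacta --- whence by left-invariance of Haar measure $\mu(HK)\ge\sum_n\mu(h_nK)=\infty$, contradicting $x\in L^p(G)$. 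Thus $H$ is compact.

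Finally, since $H$ is compact, $\overline\chi\,m_H$ is an idempotent measure in $\M(G)$, so $P:=\lambda_p(\overline\chi\,m_H)=\int_H\overline{\chi(s)}\,\lambda_p(s)\,dm_H(s)$ is a bounded idempotent on $L^p(G)$. A short computation using left-invariance of $m_H$ and multiplicativity of $\chi$ gives $\lambda_p(r)Py=\chi(r)Py$ for $r\in H$, so $\Ran P\subseteq H^p_\omega(G)$; conversely any $x$ with $\lambda_p(s)x=\chi(s)x$ on $H$ satisfies $Px=\int_H\overline{\chi(s)}\chi(s)\,x\,dm_H(s)=x$ since $m_H$ is normalised, giving $H^p_\omega(G)=\Ran\lambda_p(\overline\chi\,m_{G_{|\omega|}})$. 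In the adapted case $G_{|\omega|}=G$, so $G$ is compact and the same computation shows $P$ maps $L^p(G)$ onto $\bc\,\overline\chi$ (indeed $(Py)(t)=\overline{\chi(t)}\int_G\chi\,y\,dm_G$); hence $H^p_\omega(G)=\bc\,\overline\chi$.
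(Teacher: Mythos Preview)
Your proof is correct and follows the same core strategy as the paper: take $F=L^{p'}(G)$ for $p>1$ (or $F=C_0(G)$ for $p=1$), observe that the matrix coefficients of $\lambda_p$ lie in $C_0(G)\subseteq\WAP(G)$, and apply Corollary~\ref{cor:ecwap_rep} to $\check\omega$. The paper's own proof is just these two sentences, appealing additionally to Corollary~\ref{cor:c_naught} for the remaining conclusions.

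The only genuine difference is in how compactness of $G_{|\omega|}$ is obtained. The paper's route, via Corollary~\ref{cor:c_naught}, uses that Corollary~\ref{cor:ecwap_rep} produces a nonzero element of $\Fix L_{\check\omega}\cap C_0(G)$ (equivalently, $\bar\chi\in M_{\lambda_p,F}|_{G_{|\omega|}}\subseteq C_0(G)|_{G_{|\omega|}}$ forces $G_{|\omega|}$ compact since $|\bar\chi|\equiv 1$). You instead give a self-contained measure-theoretic argument: a nonzero left-$H$-invariant function in $L^p(G)$ forces $H$ compact via a disjoint-translate packing. Your argument is longer but avoids the machinery of Corollary~\ref{cor:c_naught}; the paper's is terser but leans on the earlier $C_0$-fixed-point characterisation. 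Likewise, you verify the range identity $H^p_\omega(G)=\Ran\lambda_p(\bar\chi\,m_{G_{|\omega|}})$ and the adapted case by hand, whereas the paper leaves these as immediate from the two cited corollaries.
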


\begin{proof}
	Letting $F=L^{p'}(G)$ (dual space) for $p>1$, or $C_0(G)$ for $p=1$, we see that 
	$M_{\lambda_p,F}\subseteq C_0(G)\subseteq \WAP(G)$.
	We then appeal directly to Corollaries \ref{cor:ecwap_rep} and \ref{cor:c_naught}.
\end{proof}


\section{Mukherjea condition and the dual of the classical case}


We devote the last section to a discussion of possible generalizations in the `dual to classical' case 
of the following classical result due to Mukherjea \cite{Muk} (see also \cite{Der}),
and draw several consequences.  

\begin{tw} \label{MukhSums} 
	Let $G$ be a locally compact group 
	and let $\omega \in \Prob(G)$.  Then the following are equivalent:
\begin{rlist}
\item Ces\`aro sums $S_n(\omega)$ converge to $0$ weak$^*$ on $C_0(G)$,
\item convolution iterates $\omega^{\conv n}$ converge to $0$ weak$^*$
  on $C_0(G)$, 
\item the group  $G_\omega$ is not compact.
\end{rlist}
\end{tw}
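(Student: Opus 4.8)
The plan is to dispose of the two easy links between the three conditions and then concentrate all effort on the single genuinely analytic implication. First I would observe that the equivalence (i)$\iff$(iii) is already contained in Corollary~\ref{cor:c_naught}: for a probability measure $\omega\in\Prob(G)$ one has $|\omega|=\omega$, hence $G_{|\omega|}=G_\omega$, and the only character $\chi$ with $\omega=\chi|\omega|$ is the trivial one, so condition (ii) of that corollary collapses to `$G_\omega$ is compact'. Corollary~\ref{cor:c_naught} then reads: $S_n(\omega)$ does \emph{not} converge weak$^*$ to $0$ precisely when $G_\omega$ is compact, and taking contrapositives yields exactly (i)$\iff$(iii).

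Next, (ii)$\implies$(i) is routine Ces\`aro averaging: testing against a fixed $a\in C_0(G)$, the hypothesis gives $\omega^{\conv n}(a)\to 0$, whence the arithmetic means $S_n(\omega)(a)=\frac1n\sum_{k=1}^n\omega^{\conv k}(a)$ also tend to $0$. It remains to prove the hard implication (iii)$\implies$(ii), which I would establish by contraposition: assuming $\omega^{\conv n}\not\to 0$ weak$^*$ on $C_0(G)$, I would show that $G_\omega$ is compact. The entry point is that for fixed $\omega$ both maps $\nu\mapsto\omega\conv\nu$ and $\nu\mapsto\nu\conv\omega$ are weak$^*$ continuous on $\M(G)=C_0(G)^*$, because for $a\in C_0(G)$ the functions $t\mapsto\int a(st)\,d\omega(s)$ and $s\mapsto\int a(st)\,d\omega(t)$ again lie in $C_0(G)$. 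Consequently the weak$^*$ closure
\[
\Sigma=\overline{\{\,\omega^{\conv n}:n\in\bn\,\}}^{\,w^*}\subseteq\M(G)_1
\]
is a \emph{commutative}, weak$^*$ compact, separately continuous semigroup under convolution, and by hypothesis it contains a non-zero element.

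The mechanism I would then exploit is the kernel (minimal ideal) of $\Sigma$. As a commutative compact semitopological semigroup, its kernel is a compact topological group $K$ whose identity $e$ is a positive contractive idempotent. A positive idempotent measure has total mass in $\{0,1\}$ (total mass is multiplicative under convolution), while every element of the group $K$ is invertible and hence of full mass; thus $e$ is a genuine idempotent probability measure, and by the classical structure theorem (\cite{kawadaito}, \cite{greenleaf:homo}) $e=m_H$ is the Haar measure of a compact subgroup $H\le G$. Since $\Sigma$ is commutative, $\omega$ and $e$ commute, so the powers $\gamma^{\conv n}=\omega^{\conv n}\conv m_H$ lie in and densely generate $K$; as $K$ is a weak$^*$ compact set of full-mass probability measures it is tight, and a standard support analysis then pins $\supp(\omega^{\conv n})\subseteq\supp(\gamma^{\conv n})$ into a single compact $H$-coset, forcing $\supp\omega$ into a compact set and hence $G_\omega$ to be compact.

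The hard part, and the main obstacle, is precisely the step guaranteeing that the idempotent $e$ (equivalently, the kernel group $K$) is \emph{non-zero}, i.e.\ that escape of mass has not destroyed every non-trivial cluster point: a priori every weak$^*$ cluster point of $(\omega^{\conv n})$ might lose total mass, in which case $0$ is the only idempotent in $\Sigma$ and the whole argument degenerates. Ruling this out is the genuinely analytic core of Mukherjea's theorem; it amounts to showing that a non-zero vague cluster point already forces a full-mass one, which requires quantitative \emph{tightness} (concentration-function) estimates of Csisz\'ar type. At this point I would either reproduce those estimates or, since the statement is being recalled rather than reproved from scratch, simply invoke the classical results \cite{Muk} and \cite{Der}.
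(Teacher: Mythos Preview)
The paper does not prove this theorem at all: it is stated as a classical result and attributed to Mukherjea \cite{Muk} and Derriennic \cite{Der}, with the only comment being that Derriennic's adapted-case version extends to the general case via restriction to $G_\omega$. So there is no ``paper's own proof'' to compare against; your proposal already does considerably more than the paper.

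Your derivation of (i)$\iff$(iii) from Corollary~\ref{cor:c_naught} is correct and is a genuine observation not made explicit in the paper: for $\omega\in\Prob(G)$ one has $|\omega|=\omega$, so condition (ii) of that corollary reduces to ``$G_\omega$ is compact'', and there is no circularity since Corollary~\ref{cor:c_naught} rests only on Proposition~\ref{prop:more-equiv}, Theorem~\ref{thm:fixed-abelian-and-wap} and \cite{greenleaf:homo}. The implication (ii)$\Rightarrow$(i) is indeed trivial.

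For (iii)$\Rightarrow$(ii) your semigroup-kernel outline is one standard route, but the step ``a standard support analysis then pins $\supp(\omega^{\conv n})\subseteq\supp(\gamma^{\conv n})$ into a single compact $H$-coset'' is not justified as stated: $\gamma=\omega\conv m_H$ can have support equal to a union of many $H$-cosets, and nothing you have written forces it into one. You are right that the genuine obstacle is showing the kernel idempotent has full mass (equivalently, that some vague cluster point of $(\omega^{\conv n})$ is a probability measure), which is exactly the Csisz\'ar-type tightness argument; since you end by invoking \cite{Muk} and \cite{Der} for this, your proposal ultimately agrees with the paper's treatment of the hard implication.
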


The result is due to Derriennic (Theorem 8 of \cite{Der}) who assumed that $\omega$ is adapted.
However, the form above is equivalent since extension holds, i.e.\ $C_0(G)_{G_\omega}=C_0(G_\omega)$.
Mukherjea established this result earlier in \cite{Muk} under the second countability assumption.

In this section we consider the cocommutative case, i.e.\ the case when $\G$ is the dual of
a locally compact group $G$. In this case, $C_0^u(\G)^*$ is identified
with the Fourier--Stieltjes algebra $B(G)$ (see \cite{eymard}),
which is the linear span of $P(G)$ -- the cone of continuous positive
definite  functions on $G$. Moreover, in this case  $\linf(\G)$ is
the group von Neumann algebra $\vn(G)$, the preadjoint of which
is the Fourier algebra $A(G)$ (again, see \cite{eymard}).
As we will show below in Theorem \ref{Mukherjeedual}, the analogous
result of Theorem~\ref{MukhSums} holds in the dual case, 
if we interpret condition (iii) in the following way: the pre-image of
$1$ with respect to the normalised function $\omega \in \B(G)^+$ is not open.

We cannot hope to extend Theorem \ref{MukhSums}  to contractive functionals as
the implication (ii)$\Longrightarrow$(iii)  can fail -- there exist 
nilpotent contractive measures on compact groups -- as can
(i)$\Longrightarrow$(ii) -- consider simply $\omega =-\delta_e$. We
will still see in Theorem \ref{Mukherjeedual} that there is a simple way to
cut out such `pathological' examples. 

Finally note that in the classical context (iii)$\Longrightarrow$(ii) holds for $\omega \in \M(G)_1$, as follows from the above result and a simple convolution estimate
$|\omega^{\conv n}| \leq |\omega|^{\conv n}$, $n \in \bn$. 

For $\omega\in B(G)$, write
\[
Z_\omega = \set{s\in G}{\omega (s) = 1}.
\]

\begin{theorem} \label{thm:Zproperties}
  Suppose that $G$ is a locally compact group and
  $\omega \in \B(G)_1$.
  \begin{rlist}
  \item
    If $Z_{\omega}$ is empty, then $\Fix L_{\omega} =\{0\}$.
    This happens in particular when $\|\omega\|< 1$.
\item
  If  $Z_{\omega}$ is not empty, then $Z_\omega = s H$ where
  $H$ is a closed subgroup of $G$ and $s\in G$.
  Moreover, $\Fix L_{\omega} = \pi(s) \VN(H)$.
  \end{rlist}
  In particular, $\Fix L_{\omega}$ is always a $W^*$-sub-TRO of
  $\VN(G)$, i.e.\ $\Fix L_{\omega}$ is a weak*-closed subspace
  that is closed under the ternary product $(x, y, z)\mapsto xy^*z$.
  
\end{theorem}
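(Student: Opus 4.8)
The plan is to reduce the entire statement to the computation of a single closed ideal of the Fourier algebra. Write $\pi$ for the left regular representation, so that $\VN(G)=\linf(\G)$ is generated by the unitaries $\pi(s)$ with $\cop(\pi(s))=\pi(s)\ot\pi(s)$, and recall $\B(G)=C_0^u(\G)^*$. First I would record that $L_\omega$ is the Fourier--Stieltjes multiplier $L_\omega(\pi(s))=\omega(s)\pi(s)$, that it is normal by Proposition~\ref{prop:collection}(vii), and that its preadjoint on $\A(G)$ is pointwise multiplication by $\omega$ (checked on the generators $\pi(s)$ and extended by density, using that $\A(G)$ is an ideal in $\B(G)$). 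Hence $x\in\Fix L_\omega$ if and only if $\langle x,(\omega-1)u\rangle=0$ for all $u\in\A(G)$, so that
\[
\Fix L_\omega=I_\omega^{\perp},\qquad I_\omega:=\clsp\,\{(\omega-1)u:u\in\A(G)\}\subseteq\A(G).
\]
By regularity of $\A(G)$ (if $\omega(t)\neq1$, choose $u$ with $u(t)\neq0$) one checks at once that the hull of the closed ideal $I_\omega$ is exactly $Z_\omega$.

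Part (i) is then immediate: if $Z_\omega=\emptyset$ then $I_\omega$ is a closed ideal with empty hull, so $I_\omega=\A(G)$ because $\A(G)$ is regular and Tauberian (\cite{eymard}); therefore $\Fix L_\omega=\A(G)^{\perp}=\{0\}$. The case $\|\omega\|<1$ falls under this, since then $|\omega(t)|\le\|\omega\|<1$ for every $t$, whence $Z_\omega=\emptyset$.

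For part (ii) I would first pin down the shape of $Z_\omega$ by realising $\omega$ as a matrix coefficient: by \cite{eymard} there are a strongly continuous unitary representation $\sigma$ and vectors $\xi,\eta$ with $\omega(t)=\langle\sigma(t)\xi,\eta\rangle$ and the norm attained. If $Z_\omega\neq\emptyset$ then $\|\omega\|=1$, so we may take $\|\xi\|=\|\eta\|=1$; equality in Cauchy--Schwarz forces $\sigma(t)\xi=\eta$ for each $t\in Z_\omega$. Fixing one such $s$ and setting $H=\{t\in G:\sigma(t)\xi=\xi\}$ (a closed subgroup, being the stabiliser of $\xi$), one gets $Z_\omega=sH$. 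Writing $\omega_0(t):=\omega(st)=\langle\sigma(t)\xi,\xi\rangle$ exhibits $\omega_0$ as a normalised continuous positive-definite function with $Z_{\omega_0}=H$. The key reduction is the intertwining identity
\[
L_{\omega_0}(\pi(s)^*z)=\pi(s)^*L_\omega(z)\qquad(z\in\VN(G)),
\]
which I would verify on the generators $z=\pi(r)$ (both sides equal $\omega(r)\pi(s^{-1}r)$) and extend by normality; it gives $\Fix L_\omega=\pi(s)\,\Fix L_{\omega_0}$. It then remains to identify $\Fix L_{\omega_0}$ with $\VN(H)$: by the first paragraph $\Fix L_{\omega_0}=I_{\omega_0}^{\perp}$ where $I_{\omega_0}$ is a closed ideal of hull $H$, and since closed subgroups are sets of spectral synthesis for $\A(G)$ (Takesaki--Tatsuuma), $I_{\omega_0}$ is forced to be $\{u\in\A(G):u|_H=0\}$, whose annihilator is precisely $\VN(H)$. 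Thus $\Fix L_\omega=\pi(s)\VN(H)$.

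Finally, the `in particular' is a one-line consequence of the explicit description. If $Z_\omega=\emptyset$ then $\Fix L_\omega=\{0\}$ is trivially a $W^*$-sub-TRO; if $\Fix L_\omega=\pi(s)\VN(H)$ then for $a,b,c\in\VN(H)$ one has $(\pi(s)a)(\pi(s)b)^*(\pi(s)c)=\pi(s)\,ab^*c\in\pi(s)\VN(H)$ (using $\pi(s)^*\pi(s)=1$ and that $\VN(H)$ is an algebra), and $\pi(s)\VN(H)$ is weak$^*$-closed because left multiplication by a unitary is a weak$^*$-homeomorphism. I expect the main obstacle to be the positive-definite case, i.e.\ the identification $\Fix L_{\omega_0}=\VN(H)$: the multiplier picture makes everything else bookkeeping, but this step genuinely rests on the deep fact that closed subgroups are spectral sets for $\A(G)$, and it is exactly where the possibly degenerate nature of $\omega$ prevents us from simply invoking Theorem~\ref{thm:group-like}.
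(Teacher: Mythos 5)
Your proposal is correct, but it travels a genuinely different route from the paper's. For the coset structure of $Z_\omega$, the paper defines $\tau(t)=\omega(st)$, observes $\tau$ is a state, and shows $Z_\tau$ is a subgroup via a multiplicative-domain argument for $\tau$ on $M(C^*(G))$ (mirroring the Choi-type techniques used throughout the quantum sections); you instead take a norm-attaining coefficient realisation $\omega(t)=\langle\sigma(t)\xi,\eta\rangle$ (available by \cite{eymard}) and use equality in Cauchy--Schwarz to identify $Z_\omega$ as a coset of the stabiliser of $\xi$ --- both arguments are sound and of comparable length. For the fixed-point identification, the paper works with Eymard's notion of $\supp x$ for $x\in\VN(G)$: the inclusion $\pi(s)\VN(H)\subseteq\Fix L_\omega$ is checked on group elements, and the converse is obtained by showing $\supp x\subseteq Z_\omega=sH$ (via \cite[Proposition 4.4]{eymard}, exactly as in its proof of part (i)) and then concluding $x\in\pi(s)\VN(H)$. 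Note that this last step of the paper silently uses the fact that $\{x\in\VN(G):\supp x\subseteq H\}=\VN(H)$, which is precisely the Takesaki--Tatsuuma theorem that closed subgroups are sets of spectral synthesis for $\A(G)$ --- the very fact you invoke explicitly. So your pre-annihilator formulation $\Fix L_\omega=I_\omega^\perp$ with $I_\omega=\clsp\{(\omega-1)u:u\in\A(G)\}$ (which is also consonant with the paper's own Definition~\ref{def:preann} and Lemma~\ref{preann}, specialised to the cocommutative case where convolution in $L^1(\G)=\A(G)$ is pointwise multiplication), together with the intertwining identity $L_{\omega_0}(\pi(s)^*z)=\pi(s)^*L_\omega(z)$, makes fully transparent a dependence that the paper leaves implicit; what it costs you is the extra bookkeeping of hull--kernel theory (in particular, the standard fact, needed for both your part (i) and the uniqueness of the closed ideal with hull $H$, that in the regular Tauberian algebra $\A(G)$ every closed ideal with hull $E$ sits between $\overline{j(E)}$ and $k(E)$ --- worth stating, though entirely classical). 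Your closing remark is also accurate: degenerate $\omega$ rules out a shortcut via Theorem~\ref{thm:group-like}, and the paper likewise does not use it here.
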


\begin{proof}
 (i)
  Suppose that $x\ne 0$ is a fixed point of $L_\omega$.
  Let $s\in \supp x$ (see \cite[D\'efinition~4.5]{eymard} for the
  definition of $\supp x$)
  and pick $f\in A(G)$ such that $f(s)\ne 0$.
  Now
  \[
  0 = L_f(x - L_\omega(x)) = L_{f - f\omega}(x).
  \]
  Therefore, since $f - f\omega\in A(G)$, it follows from
  Proposition 4.4 of \cite{eymard} that $(f - f\omega)(s)= 0$.
  Since $f(s)\ne 0$, we have that $\omega(s) = 1$.
  So if $Z_\omega = \emptyset$, then $\Fix L_\omega = \{0\}$.

  The latter statement is clear because
  if $\|\omega\|< 1$, then $|\omega(s)|< 1$ for every $s\in G$.
  
  (ii)  Let $s\in Z_\omega$ and define $\tau(t) = \omega(s t)$ for
  $t\in G$. The $\tau\in \B(G)_1$ and $\tau(e) = 1$ so $\tau$ is a state.
Let $\pi\col G\to M(C^*(G))$ be the natural map. 
For every $t\in Z_\tau$ also $t\inv\in Z_\tau$ as $\tau$ is positive
definite and  
\[
\tau(\pi(t)^*\pi(t)) = \tau (t\inv t) = 1 = 
\conj{\tau(t)}\tau(t) = \tau(\pi(t)^*)\tau(\pi(t)).
\]
Hence $\pi(t)$ is in the multiplicative domain of $\tau$
(considered as a state on $M(C^*(G))$) and it follows that 
$Z_\tau$ is closed under multiplication. 
Therefore $H:= Z_\tau$ is a closed subgroup of $G$
and $Z_\omega = s Z_\tau = sH$.

Let $t\in G$. Then $\lt_\omega(\pi(t)) = \omega(t)\pi(t) = \pi(t)$ 
if and only if $t\in sH$. It follows that 
$\pi(s) \VN(H) \sub \Fix \lt_{\omega}$. 
Conversely, if $x\in \Fix \lt_\omega$, then the proof of the statement (i)
shows that $\supp x \sub Z_\omega$, and so
$\Fix \lt_{\omega} \sub \pi(s) \VN(H)$.
\end{proof}

\begin{cor}
Suppose that $\omega \in \B(G)_1$ is non-degenerate. 
Then $\dim (\Fix L_{\omega})\leq 1$. Moreover, if $\Fix L_{\omega}$ is
non-zero, it contains a unitary. 
\end{cor}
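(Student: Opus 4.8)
The plan is to obtain the conclusion directly from Theorem~\ref{thm:Zproperties} together with the general one-dimensionality result already established for universally SIN quantum groups, namely the Corollary following Theorem~\ref{thm:group-like}. Three facts are needed as input. First, the dual $\G$ of a locally compact group is universally SIN, as recorded in the discussion preceding that corollary. Second, under the identifications $L^\infty(\G)=\VN(G)$ and $M^u(\G)=\B(G)$, the hypothesis that $\omega\in\B(G)_1$ is non-degenerate is exactly the statement that $\omega$ is a non-degenerate element of $\MUG_1$. Third, Theorem~\ref{thm:Zproperties} asserts that $\Fix L_\omega$ is \emph{always} a $W^*$-sub-TRO of $\VN(G)$, which is precisely the structural hypothesis that the universally SIN corollary requires.

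Feeding these three facts into the Corollary following Theorem~\ref{thm:group-like} then yields at once that $\dim(\Fix L_\omega)\le 1$, and that a non-zero $\Fix L_\omega$ contains a unitary. On this route there is essentially no obstacle remaining: all the content has been front-loaded into Theorem~\ref{thm:Zproperties}, which both produces the TRO structure and, in the non-trivial case, gives the explicit form $\pi(s)\VN(H)$, and into the universally SIN corollary, whose own proof passes through Theorem~\ref{thm:group-like} and \cite[Theorem~3.6]{knr}.

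If instead one wanted a self-contained argument not routed through the general machinery, I would work from the explicit description supplied by Theorem~\ref{thm:Zproperties}: either $Z_\omega=\emptyset$, in which case $\Fix L_\omega=\{0\}$ and nothing is to be shown, or $Z_\omega=sH$ and $\Fix L_\omega=\pi(s)\VN(H)$. In the latter case one fixes $s\in Z_\omega$, notes that $\pi(s)$ is a unitary with $\omega(\pi(s))=\omega(s)=1$, and applies Lemma~\ref{lem:abs} to identify $|\omega|$ with a translate of $\omega$, concretely $|\omega|(t)=\omega(ts)$; hence $Z_{|\omega|}=sHs^{-1}$ and, by Theorem~\ref{thm:Zproperties} once more, $\Fix L_{|\omega|}=\VN(sHs^{-1})$. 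Since $|\omega|$ is a non-degenerate state, \cite[Theorem~3.6]{knr} forces $\Fix L_{|\omega|}=\C 1$, so $sHs^{-1}=\{e\}$, i.e.\ $H$ is trivial, giving $\Fix L_\omega=\C\pi(s)$ with the unitary $\pi(s)$. The one delicate point in this alternative is the clean application of Lemma~\ref{lem:abs} to pin $|\omega|$ down as the correct translate of $\omega$, since it is exactly this identification that converts the non-degeneracy of $|\omega|$ into the triviality of $H$.
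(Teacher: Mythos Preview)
Both routes you sketch are valid. The paper, however, takes a more direct and elementary path that avoids both the universally SIN corollary and \cite[Theorem~3.6]{knr}. It simply observes that non-degeneracy forces $Z_{|\omega|}\subset\{e\}$: if $s\in Z_{|\omega|}$, then for every $k$,
\[
|\omega|^{\conv k}\bigl((\pi(s)-\pi(e))^*(\pi(s)-\pi(e))\bigr)=2|\omega|^{\conv k}(e)-|\omega|^{\conv k}(s)-|\omega|^{\conv k}(s^{-1})=0,
\]
contradicting non-degeneracy unless $s=e$. Combined with Theorem~\ref{thm:Zproperties} (and the identification of $H$ with a conjugate of $Z_{|\omega|}$, which you spell out via Lemma~\ref{lem:abs} and the paper leaves implicit), this gives $\Fix L_\omega=\C\pi(s)$ at once. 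Your first route has the merit of being a clean instance of the general framework once the TRO structure is in hand; your second route is close in spirit to the paper's argument but trades the two-line computation above for an appeal to \cite[Theorem~3.6]{knr}. The paper's version buys independence from that external result at essentially no cost.
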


\begin{proof}
  We shall show that
  if $\omega$ is non-degenerate, then  $Z_{|\omega|} \subset \{e\}$. 
Suppose otherwise and pick $s\in Z_{|\omega|}\sm\{e\}$. Then 
\[
|\omega|^{\conv k}\bigl(((\pi(s)-\pi(e))^*(\pi(s)-\pi(e))\bigr)
= 2|\omega|^{\conv k}(e) - |\omega|^{\conv k}(s)-|\omega|^{\conv k}(s\inv) = 0
\]
for every $k\in\N$, which is in contradiction with non-degeneracy.
The preceding theorem implies that
if $\Fix L_\omega \ne \{0\}$, 
then $\Fix L_\omega = \pi(s) \complex$ for some $s\in G$
as $Z_{|\omega|} = H = \{e\}$ (in the notation of that theorem).
Now $\pi(s)$ is the required unitary element.
\end{proof}

Before we proceed with the main result of this section, we record
below how Theorem~\ref{thm:Zproperties} provides an answer to a
question asked by Chu and Lau, who
posed the following in \cite[Remark 3.3.16]{Chu-Lau}:
Let $\omega \in B(G)_1$ be such that  $Z_\omega$ is discrete 
and $A(G)/{I_\omega}$ has a bounded approximate identity
(recall that $I_\omega$ denotes the pre-annihilator of 
$\Fix L_\omega$ in $A(G)$; see Definition~\ref{def:preann}).  
Does Arens regularity of $A(G)/{I_\omega}$ imply finiteness of
$Z_\omega$? We now show that this is indeed the case.

\begin{theorem} 
Let $G$ be a locally compact group, and let $\omega \in B(G)_1$ be
such that $Z_\omega$ discrete. Assume that the Banach algebra
$A(G)/{I_\omega}$ has a BAI and is Arens regular. Then $Z_\omega$ is
finite.  
\end{theorem}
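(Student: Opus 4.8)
The plan is to reduce the statement to a known dichotomy for the Fourier algebra of the subgroup encoded by $Z_\omega$. We may assume $Z_\omega \neq \emptyset$, since otherwise there is nothing to prove. Then Theorem~\ref{thm:Zproperties}(ii) applies and gives $s \in G$ and a closed subgroup $H \leq G$ with $Z_\omega = sH$ and $\Fix L_\omega = \pi(s)\VN(H)$. As $t \mapsto st$ is a homeomorphism of $G$, discreteness of $Z_\omega$ forces $H$ to be a discrete subgroup; the goal becomes to show that $H$ is finite.

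The technical heart of the argument is the identification $A(G)/I_\omega \cong A(H)$ as Banach algebras. By Lemma~\ref{preann}, $I_\omega$ is the pre-annihilator of $\Fix L_\omega = \pi(s)\VN(H)$; since the elements $\pi(t)$ with $t \in sH$ are weak*-total in $\pi(s)\VN(H)$ and each $f \in A(G)$ satisfies $f(\pi(t)) = f(t)$, one sees that $I_\omega = \{f \in A(G) : f|_{sH} = 0\}$. Now the left translation $\lambda_{s^{-1}}\colon f \mapsto f(s\,\cdot\,)$ is an isometric automorphism of $A(G)$ for the pointwise product, so composing it with the restriction map $A(G) \to A(H)$ produces a surjective algebra homomorphism whose kernel is precisely $I_\omega$. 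By the restriction theorem for Fourier algebras (Herz; see also \cite{eymard}), this map is a metric surjection, and therefore descends to an isometric isomorphism of Banach algebras $A(G)/I_\omega \cong A(H)$.

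Granting this identification, the two standing hypotheses transfer to $A(H)$: it has a bounded approximate identity and is Arens regular. By Leptin's theorem, a bounded approximate identity in $A(H)$ is equivalent to amenability of $H$. Finally, by a theorem of Forrest (building on Lau--Wong) a discrete group whose Fourier algebra is Arens regular has all of its amenable subgroups finite; applied to the amenable group $H$ itself, this yields that $H$ is finite, and hence $Z_\omega = sH$ is finite, as claimed.

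I expect the main obstacle to be the precise identification $A(G)/I_\omega \cong A(H)$: one has to check that $I_\omega$ really is the ideal of functions vanishing on the coset $sH$ (via the weak*-density of the $\pi(t)$ and the duality pairing), and that the restriction-and-translation map realises the quotient \emph{isometrically as an algebra}, so that both the approximate identity and Arens regularity are genuinely inherited by $A(H)$. Once this is in place, the conclusion follows by quoting Leptin's and Forrest's theorems.
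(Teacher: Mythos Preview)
Your argument is correct and takes a genuinely different route from the paper. Both proofs start the same way, invoking Theorem~\ref{thm:Zproperties} to write $Z_\omega=sH$ with $H$ a discrete closed subgroup and $\Fix L_\omega=\pi(s)\VN(H)$. From there the paper works on the \emph{dual} side: it quotes \cite[Proposition~3.3.15(ii)]{Chu-Lau}, which (under BAI and Arens regularity of $A(G)/I_\omega$) says that $I_\omega^{\perp}=\Fix L_\omega$ is Banach-space isomorphic to a Hilbert space, hence reflexive; since $\Fix L_\omega\cong\VN(H)$ as Banach spaces, reflexivity of a von Neumann algebra forces finite-dimensionality, so $H$ is finite. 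You instead work on the \emph{predual} side: via Herz's restriction theorem and a translation you identify $A(G)/I_\omega\cong A(H)$ isometrically as Banach algebras, then feed the transferred hypotheses into Leptin's theorem (BAI $\Rightarrow$ $H$ amenable) and Forrest's theorem (Arens regularity of $A(H)$ rules out infinite amenable subgroups) to conclude $H$ finite. Your approach avoids the specific Chu--Lau black box and makes transparent exactly which Fourier-algebraic facts are doing the work; the paper's approach is shorter and stays closer to the source of the question, deducing finiteness from a purely Banach-space property of $\VN(H)$.
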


\begin{proof} 
  By assumption, we are in the situation of
  \cite[Proposition 3.3.15 (ii)]{Chu-Lau}. Hence,  
  $E := I_\omega^\perp = \Fix L_\omega$ is, as a
  	Banach space, isomorphic to a Hilbert space, so in particular
        $E$ is reflexive.  
	We can assume that $Z_\omega$ is non-empty. 
	By Theorem~\ref{thm:Zproperties} (ii),
        we have $Z_\omega = sH$ and $E = \lambda_s \VN(H)$,
        where $H$ is a closed subgroup of $G$ and $s \in G$. Thus, the
        Banach space  $\VN(H)$ is isomorphic to $E$,
	hence reflexive. But a von Neumann algebra whose underlying
        Banach space is reflexive, is finite-dimensional; cf., e.g.,
        \cite[Proposition 1.11.7]{lbr}.  So $H$ is finite, whence
        $Z_\omega = sH$ is finite as well.  
\end{proof} 

We now return to the main theme of the section.

\begin{theorem} \label{Mukherjeedual}
  If $\omega\in B(G)$ such that $\|\omega\| = 1$ and
  $Z_\omega\ne \emptyset$, then the
following are equivalent: 
\begin{rlist}
\item Ces\`aro sums $S_n(\omega)$ converge to $0$ weak$^*$ on $C^*(G)$;
\item convolution iterates $\omega^{\conv n}$ converge to $0$ weak$^*$
  on $C^*(G)$;
\item the set $Z_{\omega}$ is not open
  \textup{(}equivalently, $Z_{\omega}$ is locally null\textup{)}.
\end{rlist}
In particular, the above statements are equivalent when $\omega$ is a state.
\end{theorem}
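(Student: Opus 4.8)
The plan is to exploit that, in the cocommutative case, the convolution $\conv$ on $B(G)=C^*(G)^*$ is just pointwise multiplication of Fourier--Stieltjes functions, so that $\omega^{\conv n}(t)=\omega(t)^n$ and $S_n(\omega)(t)=\frac1n\sum_{k=1}^n\omega(t)^k$, and every $\mu\in B(G)$ is tested weak$^*$ against $C^*(G)$ through $\langle\mu,f\rangle=\int_G\mu(t)f(t)\,dt$ for $f\in L^1(G)$ (a dense subspace of $C^*(G)$). First I would reduce to the case that $\omega$ is a state: fixing $s\in Z_\omega$ and putting $\tau(t)=\omega(st)$, the left translation $L_s\colon B(G)\to B(G)$ given by $(L_s\mu)(a)=\mu(\pi(s)a)$ is the adjoint of the isometric bijection $a\mapsto\pi(s)a$ of $C^*(G)$, hence a weak$^*$-homeomorphism, and one checks $\tau^{\conv n}=L_s(\omega^{\conv n})$, $S_n(\tau)=L_s(S_n(\omega))$ and $Z_\tau=s^{-1}Z_\omega$; thus (i)--(iii) hold for $\omega$ iff they hold for the state $\tau$ (and $Z_\omega$ is open iff $Z_\tau$ is). This simultaneously covers the final ``in particular'' clause. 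From now on $\omega$ is a normalised continuous positive-definite function; its GNS data $(\pi,\xi)$ show, by equality in Cauchy--Schwarz, that $D:=\{t:|\omega(t)|=1\}$ is a closed subgroup on which $\omega$ restricts to a continuous character $\chi$, with $\ker\chi=Z_\omega$ (this is implicit in the proof of Theorem~\ref{thm:Zproperties}).

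The implication (ii)$\Longrightarrow$(i) is free: Cesàro averages of a bounded weak$^*$-null sequence are weak$^*$-null. For (i)$\Longleftrightarrow$(iii) I would argue by dominated convergence. Since $|\omega(t)|\le\|\omega\|=1$, a geometric-sum estimate gives $S_n(\omega)(t)\to\mathbf 1_{Z_\omega}(t)$ pointwise with $|S_n(\omega)(t)|\le1$, so for every $f\in L^1(G)$ we get $\langle S_n(\omega),f\rangle\to\int_{Z_\omega}f$. Hence (i) holds iff $Z_\omega$ is locally null; as $Z_\omega$ is a coset of a closed subgroup, this is exactly the failure of $Z_\omega$ to be open (a closed subgroup is either open or Haar-null), i.e.\ (iii).

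The substance of the theorem is (iii)$\Longrightarrow$(ii). Splitting $\int_G=\int_{G\setminus D}+\int_D$, the first integral tends to $0$ by dominated convergence because $|\omega|^n\to0$ off $D$. If $D$ is not open it is Haar-null, the second integral vanishes, and we are done. The remaining case is $D$ open, where the restriction of Haar measure to $D$ is a Haar measure and, after the obvious change of variables, everything reduces to showing $\int_D\chi(u)^n h(u)\,du\to0$ for all $h\in L^1(D)$. Factoring $\chi$ through the abelianisation $D^{\mathrm{ab}}$ and pushing $h$ forward, this becomes the same statement for the character $\chi^{\mathrm{ab}}$ on the abelian group $D^{\mathrm{ab}}$, which by Riemann--Lebesgue holds as soon as the powers $(\chi^{\mathrm{ab}})^n$ leave every compact subset of $\widehat{D^{\mathrm{ab}}}$. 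The latter is where Pontryagin duality enters: the forward orbit $\{(\chi^{\mathrm{ab}})^n:n\ge1\}$ is relatively compact iff $\overline{\langle\chi^{\mathrm{ab}}\rangle}$ is a compact subgroup (a compact cancellative semigroup is a group), and by duality this closed subgroup is isomorphic to $\widehat{D^{\mathrm{ab}}/\ker\chi^{\mathrm{ab}}}$, which is compact precisely when $\ker\chi^{\mathrm{ab}}$ is open; unwinding, $\overline{\langle\chi^{\mathrm{ab}}\rangle}$ is non-compact exactly when $\ker\chi=Z_\omega$ fails to be open. Thus (iii) forces the orbit to escape to infinity and the integrals to vanish, giving (ii).

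The main obstacle is this last step: one must pass from the possibly non-abelian open subgroup $D$ to its abelianisation and then run the Pontryagin-duality bookkeeping that ties non-compactness of $\overline{\langle\chi^{\mathrm{ab}}\rangle}$ --- equivalently the weak$^*$-decay of the powers $\chi^n$ against $L^1(D)$ --- precisely to the non-openness of $Z_\omega$. Everything else (the reduction to states, the dominated-convergence computations, and the Cesàro implication) is routine; the delicate point is that \emph{a priori} $\overline{\langle\chi^{\mathrm{ab}}\rangle}$ could be an infinite compact monothetic group, and it is exactly the hypothesis that $Z_\omega$ is not open that rules this out. This is the dual analogue of the dichotomy appearing in Theorem~\ref{MukhSums}.
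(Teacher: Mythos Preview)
Your overall strategy coincides with the paper's: reduce to a state by the left translation $\tau(t)=\omega(st)$, handle (ii)$\Rightarrow$(i) trivially, and for (iii)$\Rightarrow$(ii) split $\int_G=\int_{G\setminus D}+\int_D$ with $D=\omega^{-1}(\mathbb T)$, dispatching the first integral by dominated convergence and the second by a Riemann--Lebesgue argument. Your treatment of (i)$\Leftrightarrow$(iii) via $S_n(\omega)(t)\to\mathbf 1_{Z_\omega}(t)$ and dominated convergence is in fact a little cleaner than the paper, which only proves (i)$\Rightarrow$(iii) directly by exhibiting a test function supported in $Z_\omega$.

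The one genuine gap is in your (iii)$\Rightarrow$(ii). You correctly argue that $\overline{\langle\chi^{\mathrm{ab}}\rangle}$ is non-compact exactly when $Z_\omega$ is not open, and that Riemann--Lebesgue applies once $(\chi^{\mathrm{ab}})^n\to\infty$ in $\widehat{D^{\mathrm{ab}}}$. But ``forward orbit not relatively compact'' does \emph{not}, for a general sequence, imply ``leaves every compact set''; you silently use this implication. What saves you is Weil's dichotomy for locally compact monothetic groups (Hewitt--Ross, (9.1)): such a group is either compact or topologically isomorphic to $\mathbb Z$. Applied to $\overline{\langle\chi^{\mathrm{ab}}\rangle}$, non-compactness forces it to be a discrete copy of $\mathbb Z$, whence $(\chi^{\mathrm{ab}})^n\to\infty$. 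You should state and invoke this.

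The paper sidesteps this issue entirely, and in doing so also shows that the detour through $D^{\mathrm{ab}}$ is unnecessary. Since $\chi\colon D\to\mathbb T$ already has abelian image, $D/Z_\omega$ is itself abelian; the paper observes that a non-discrete locally compact group admitting a continuous injective character into $\mathbb T$ must be (topologically) $\mathbb T$ itself, so $D/Z_\omega\cong\mathbb T$. One then applies the Weil integration formula over $D\to D/Z_\omega\cong\mathbb T$, and Riemann--Lebesgue becomes the trivial statement that $\chi^n\to\infty$ in $\widehat{\mathbb T}=\mathbb Z$. This is both shorter and avoids the monothetic-group lemma; you may prefer to reroute your argument through $D/Z_\omega$ rather than $D^{\mathrm{ab}}$.
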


\begin{proof}
Note first that by Theorem~\ref{thm:Zproperties} the
set $Z_{\omega}$ is a coset of a closed subgroup $H$ of $G$. Naturally
$Z_{\omega}$ is open if and only if $H$ is open, and locally null with
respect to the left Haar measure of $G$ if and only if $H$ is locally null. Now
the fact that $H$ is not locally null is equivalent to $H$ being open is
standard (and follows from a Steinhaus type theorem, see for example
Corollary 20.17 in \cite{HR}). We now proceed with the proof of the
main equivalences.  
  
(ii)$\Longrightarrow$(i): trivial.
 
(i)$\Longrightarrow$(iii): 
  assume that (iii) is not true. Since $Z_\omega$ is open,
  there is a compactly supported continuous function $f$ 
  on $G$ such that $\supp f \sub Z_\omega$ and 
  $\int f(s)\, ds = 1$ (note that $Z_\omega \ne \emptyset$ by
  assumption).   
  If $\pi\col L^1(G)\to C^*(G)$ is the canonical map, 
  then 
  \[
  \omega^{\conv k} (\pi(f)) = 
\int f(s) \omega(s)^{k}\, ds = \int f(s)\, ds = 1
\]
for every $k\in \bn$. 
Hence $S_n(\omega)(\pi(f)) = 1$ for every $n\in \bn$ 
and (i) is not true. 

(iii)$\Longrightarrow$(ii): let then $\omega$ be as in the assumptions
of the theorem.  
The proof of Theorem~\ref{thm:Zproperties} shows that there
exists $s \in Z_\omega$ and a state $\tau \in B(G)$ such that
$\tau(t)=\omega(st)$, $t \in G$; moreover $Z_\omega = sZ_\tau$ (so
that (iii) is equivalent to $Z_\tau$ being null). If $f \in L^1(G)$
then  
\[
\omega^{\conv n}(\pi(f)) = \int_G \omega^n(t) f(t) dt,\qquad
\tau^{\conv n}(\pi(f)) = \int_G \omega^n(st) f(t)  dt
= \int_G\omega^n(t) f(s^{-1}t) dt.
\]
This implies that (ii) is equivalent to the convolution iterates $\tau^{\conv n}$ converging to $0$ weak$^*$
on $C^*(G)$. 

The discussion above implies that  we can assume
without loss of generality
that $\omega$ is a state. There is then a strongly continuous unitary
representation $\pi:G \to B(\Hil)$ on some Hilbert space $\Hil$ and a
unit vector $\xi \in \Hil$ such that $\omega(g) = \langle \xi, \pi(g)
\xi \rangle, g \in G$. Write $U:=\omega^{-1}(\mathbb{T})$. As for any
$z \in \mathbb{T}$ and $g \in G$ we have 
\[
\omega(s) = z \iff z = \langle \xi, \pi(g) \xi \rangle
\iff \pi(g) \xi = z \xi,
\]
it is easy to see that $U$ is a closed subgroup of $G$ and $\omega|_U
:U \to \mathbb{T}$ is a homomorphism with kernel equal to
$Z_\omega$. Thus $\omega|_U =\chi \circ q$, where $q:U\to U/Z_\omega$
is the quotient map and $\chi\in \widehat{U/Z_\omega}$. Further
$U/Z_\omega \cong \omega(U) \subset \mathbb{T}$. 
 
Let $f\in L^1(G)$. If $U$ is not open, then $\mu_G(U)=0$, as argued in
the beginning of the proof, and we have
\[
\omega^{\conv n}(\pi(f)) =\int_{G\setminus U} \omega^n(t) f(t) dt \stackrel{n\to
  \infty}{\longrightarrow} 0
\]
by the dominated convergence theorem.
We may then assume that $U$ is open;  and then $U/Z_\omega$ cannot be a
finite subgroup of $\mathbb{T}$ (as $Z_\omega$ is assumed not
open). Hence $U/Z_\omega \cong  \mathbb{T}$, and $\chi\neq 1$. By the
standard integration formula we have 
\begin{align*}
  \int_{U} \omega^n(t) f(t) dt
  &= \int_{U/Z_\omega} \left(\int_{Z_\omega} \omega(st)^n f(st)
  d\mu_{Z_\omega}(t)  \right)  d\mu_{U/Z_\omega} (sZ_\omega)  \\
  &=\int_{U/Z_\omega} \chi(sZ_\omega)^n\left(\int_{Z_\omega}  f(st)
  d\mu_{Z_\omega}(t) \right)  d\mu_{U/Z_\omega} (sZ_\omega),
\end{align*}
and the latter tends to $0$ by the Riemann-Lebesgue lemma -- as the
function $sZ_\omega \mapsto \int_{Z_\omega}  f(st) d\mu_{Z_\omega}(t)$
is integrable. This ends the proof.  
\end{proof}

Note that the above theorem is related to Theorem 3 of
\cite{Mehrdad2}, which is on one hand only treating the
positive-definite functions $\omega$, but on the other obtains a
stronger notion of convergence in (ii), i.e.\ the point--norm
convergence of the convolution iterates. 

Furthermore, we note that the equivalence of (i) and (iii) in
Theorem~\ref{Mukherjeedual} above also
follows from the recent, independently obtained result in
\cite[Proposition 2.5]{Mus}.
However, the equivalence with (ii) in Theorem~\ref{Mukherjeedual} does
not follow from \cite{Mus}, as the example of $u(t) = e^{it}$ on $G=\mathbb{R}$
shows, which is covered by our Theorem~\ref{Mukherjeedual},
but does not satisfy the assumption of
\cite[Proposition 4.4 (a)]{Mus}; cf.\ the remark after the proof of the latter.

Finally we stress that the condition that $Z_\omega$ is not open is
connected to the fact that the `support' of $\omega$ is
non-compact. We see this clearly in the case where $G$ is abelian. 

\begin{prop}
  Let $G$ be a locally compact abelian group
  and let $\mu \in \M(G), \|\mu\|=1$. Then
$Z_{\hat{\mu}}\neq \emptyset$ if and only if 
$\frac{d \mu}{d |\mu|} =\chi$ $\mu$-a.e. for some character  $\chi \in
\hat{G}$. Moreover, in 
that case $G_\mu$ is compact if and only if $Z_{\hat{\mu}}$ is open. 
\end{prop}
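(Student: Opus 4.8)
The plan is to work entirely in the Fourier--Stieltjes picture. Since the Fourier--Stieltjes transform is an isometric isomorphism of $\M(G)$ onto $\B(\hat G)$, we have $\hat\mu\in\B(\hat G)_1$ and $Z_{\hat\mu}=\{\gamma\in\hat G:\hat\mu(\gamma)=1\}$, where $\hat\mu(\gamma)=\int_G\overline{\gamma(s)}\,d\mu(s)$. First I would fix the polar decomposition $\mu=u\,|\mu|$, where $u=\frac{d\mu}{d|\mu|}$ is unimodular $|\mu|$-a.e.; note that $\|\mu\|=1$ means exactly that $|\mu|$ is a probability measure, and that $\supp\mu=\supp|\mu|$. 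For the first equivalence I would use the equality case of the triangle inequality: since
\[
|\hat\mu(\gamma)|=\Big|\int_G\overline{\gamma(s)}\,u(s)\,d|\mu|(s)\Big|\le\int_G 1\,d|\mu|=1,
\]
the condition $\hat\mu(\gamma)=1$ forces $\int_G\bigl(1-\operatorname{Re}(\overline{\gamma}u)\bigr)\,d|\mu|=0$ with a nonnegative integrand, hence $\overline{\gamma(s)}u(s)=1$ and so $u(s)=\gamma(s)$ for $|\mu|$-a.e.\ $s$. Thus $Z_{\hat\mu}\neq\emptyset$ precisely when $u=\frac{d\mu}{d|\mu|}$ agrees $|\mu|$-a.e.\ (equivalently $\mu$-a.e.) with some $\chi\in\hat G$; conversely, if $u=\chi$ a.e.\ then $\hat\mu(\chi)=\int_G 1\,d|\mu|=1$ (here $\|\mu\|=1$ is essential), so $\chi\in Z_{\hat\mu}$.

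Assume now $Z_{\hat\mu}\neq\emptyset$, so $\mu=\chi\,|\mu|$ for a fixed $\chi\in\hat G$, and write $H=G_\mu=G_{|\mu|}$ for the closed subgroup generated by $\supp|\mu|=\supp\mu$, and $H^{\perp}=\{\gamma\in\hat G:\gamma|_H=1\}$ for its annihilator. Repeating the equality analysis with $u=\chi$ shows that $\hat\mu(\gamma)=1$ holds iff $\gamma=\chi$ $|\mu|$-a.e.; since $\gamma$ and $\chi$ are continuous and agree off a $|\mu|$-null set, this is equivalent to $\gamma=\chi$ on $\supp|\mu|$, and since $\gamma\overline{\chi}$ is a character, to $\gamma\overline{\chi}\equiv 1$ on the generated closed subgroup $H$. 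Hence
\[
Z_{\hat\mu}=\{\gamma\in\hat G:\gamma\overline{\chi}\in H^{\perp}\}=\chi\,H^{\perp},
\]
a coset of the closed subgroup $H^{\perp}$ (consistent with, and in this case sharpening, the coset structure already furnished by Theorem~\ref{thm:Zproperties}). In particular $Z_{\hat\mu}$ is open in $\hat G$ if and only if $H^{\perp}$ is open.

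It then remains to invoke Pontryagin duality: for a closed subgroup $H$ of a locally compact abelian group one has $\hat G/H^{\perp}\cong\hat H$, so $H^{\perp}$ is open iff $\hat G/H^{\perp}$ is discrete iff $\hat H$ is discrete iff $H$ is compact (the standard subgroup--annihilator duality, see e.g.\ \cite{HR}). Applying this with $H=G_\mu$ gives $G_\mu$ compact $\iff H^{\perp}$ open $\iff Z_{\hat\mu}$ open, as claimed. The only genuinely non-routine points are the equality-case argument upgrading $|\hat\mu|\le 1$ to the pointwise identification of $u$ with a character, and the clean identification of $Z_{\hat\mu}$ with the coset $\chi H^{\perp}$ (where continuity is used to pass from ``$|\mu|$-a.e.''\ to ``on $\supp|\mu|$''\ and then to all of $G_\mu$); once these are in place the compactness equivalence is a direct citation of the duality.
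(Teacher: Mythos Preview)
Your proof is correct and follows essentially the same route as the paper's: both use the equality case of $\bigl|\int \overline{\gamma}\,\tfrac{d\mu}{d|\mu|}\,d|\mu|\bigr|\le 1$ to identify $\tfrac{d\mu}{d|\mu|}$ with a character, then show $Z_{\hat\mu}=\chi\,G_\mu^\perp$ by the same continuity argument passing from $|\mu|$-a.e.\ equality to equality on $\supp|\mu|$ and hence on $G_\mu$, and finally invoke the standard duality fact that $G_\mu$ is compact iff $G_\mu^\perp$ is open. Your write-up is somewhat more detailed (you spell out the real-part argument and the Pontryagin-duality step $\hat G/H^\perp\cong\hat H$), but there is no substantive difference in strategy.
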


\begin{proof}
If $\chi\in Z_{\hat\mu}$, then 
\[
1 = \hat\mu(\chi) = \int \conj{\chi(s)}\, d\mu(s) 
= \int \conj{\chi(s)}\frac{d\mu}{d|\mu|}(s)\, d|\mu|(s), 
\]
which implies that $\frac{d \mu}{d |\mu|} =\chi$ $\mu$-a.e.
(because $\|\mu\| = 1$). The converse is trivial. 

Moreover, if $\chi \in Z_{\hat\mu}$, then 
\begin{align*}
\eta\in Z_{\hat\mu} &\iff 
1 = \int \conj{\eta(s)}\chi(s)\, d|\mu|(s) \\
&\iff \conj{\eta}\chi = 1 \text{ on } \supp \mu \\
&\iff \conj{\eta}\chi = 1 \text{ on } G_\mu \\
&\iff \eta \in \chi G_\mu^\perp.
\end{align*}
Then the second statement follows from the fact that 
$G_\mu$ is compact if and only if $G_\mu^\perp$ is open.
\end{proof}

Note that the preceding  two results imply  Mukherjea's
result for locally compact abelian groups.

\subsection*{Acknowledgements} The work on this article was initiated during the Research in Pairs
visit to the Mathematisches Forschungsinstitut Oberwolfach in August
2012. We are very grateful to MFO for providing ideal research
conditions.
AS was partially supported by the National Science Center (NCN) grant no.~2014/14/E/ST1/00525. 
PS was partially supported by the Emil Aaltonen Foundation. We are grateful to the referee for a careful reading of our paper and several useful comments.


\end{document}